\newtheorem{Def}{Definition}[section]
\newtheorem{Lem}{Lemma}[section]
\numberwithin{equation}{section}
\numberwithin{theorem}{section}
\numberwithin{Lem}{section}
\numberwithin{Def}{section}
\numberwithin{remark}{section}
\numberwithin{prop}{section}
\begin{document}

\title{The joint bidiagonalization process with partial reorthogonalization
	\thanks{This
		work was supported in part by
		the National Science Foundation of China (No. 11771249).}}

%\subtitle{Do you have a subtitle?\\ If so, write it here}

\titlerunning{JBD with partial reorthogonalization}        % if too long for running head

\author{Zhongxiao Jia \and Haibo Li }

%\authorrunning{Short form of author list} % if too long for running head

\institute{ Zhongxiao Jia \at
	Department of Mathematical Sciences, Tsinghua University, 100084 Beijing, China \\
	\email{jiazx@tsinghua.edu.cn}
	\and
	Haibo Li \at
	Department of Mathematical Sciences, Tsinghua
	University, 100084 Beijing, China. \\
	\email{li-hb15@mails.tsinghua.edu.cn}
}

%\date{Received: date / Accepted: date}
% The correct dates will be entered by the editor

\maketitle
\begin{abstract}
The joint bidiagonalization(JBD) process is a useful algorithm for the computation of the generalized singular value decomposition(GSVD) of a matrix pair. However, it always suffers from rounding errors, which causes the Lanczos vectors to loss their mutual orthogonality. In order to maintain some level of orthongonality, we present a semiorthogonalization strategy. Our rounding error analysis shows that the JBD process with the semiorthogonalization strategy can ensure that the convergence of the computed quantities is not affected by rounding errors and the final accuracy is high enough. Based on the semiorthogonalization strategy, we develop the joint bidiagonalization process with partial reorthogonalization(JBDPRO). In the JBDPRO algorithm, reorthogonalizations occur only when necessary, which saves a big amount of reorthogonalization work compared with the full reorthogonalization strategy. Numerical experiments illustrate our theory and algorithm.
	
%Insert your abstract here. Include keywords, PACS and mathematical
%subject classification numbers as needed.
\keywords{joint bidiagonalization \and GSVD \and Lanczos bidiagonalization \and  orthogonality level \and semiorthogonalization \and partial reorthogonalization \and JBDPRO }
% \PACS{PACS code1 \and PACS code2 \and more}
% \subclass{MSC code1 \and MSC code2 \and more}
\subclass{15A18 \and 65F15 \and 65F25 \and 65F50 \and 65G50}
\end{abstract}	

%%%%%%%%%%%%%%%%%%%%%%%%%%%%%%%%%%%%%%%%%%%%%%%%%%%%%%%%%%%%
\section{Introduction}\label{sec1}
The joint bidiagonalization(JBD) process is a useful algorithm for computing some extreme generalized singular values and vectors for a large sparse or structured matrix pair $\{A,L\}$ \cite{Paige1981,Van1976} where $A\in\mathbb{R}^{m\times n}$ and $L\in\mathbb{R}^{p\times n}$, as well as solving large-scale discrete ill-posed problems with general-form Tikhonov regularization \cite{Hansen1989,Hansen1998,Hansen2010}. First proposed by Zha \cite{Zha1996}, it iteratively reduces the matrix pair $\{A,L\}$ to an upper or lower bidiagonal form. It was later adopted by Kilmer \cite{Kilmer2007} to jointly diagonalizes $\{A, L\}$ to lower and upper bidiagonal forms.

Consider the compact $QR$ factorization of the stacked matrix:
\begin{equation}\label{1.1}
\begin{pmatrix}
A \\
L
\end{pmatrix} = QR =
\begin{pmatrix}
Q_{A} \\
Q_{L}
\end{pmatrix}R  ,
\end{equation}
where $Q \in \mathbb{R}^{(m+p)\times n}$ is column orthonormal and $R\in \mathbb{R}^{n\times n}$. We partition $Q$ such that $Q_{A}\in\mathbb{R}^{m\times n}$ and $Q_{L}\in\mathbb{R}^{p\times n}$, so we have $A= Q_{A}R$ and $L = Q_{L}R$. Applying the BIDIAG-1 procedure and BIDIAG-2 procedure \cite{Paige1982}, which correspond to the lower and upper Lanczos bidiagonalization processes, to $Q_{A}$ and $Q_{L}$, respectively, we can reduce $Q_{A}$ and $Q_{L}$ to the following lower and upper bidiagonal matrices, respectively:
\begin{equation}\label{1.2}
B_{k}=\begin{pmatrix}
\alpha_{1} & & & \\
\beta_{2} &\alpha_{2} & & \\
&\beta_{3} &\ddots & \\
& &\ddots &\alpha_{k} \\
& & &\beta_{k+1}
\end{pmatrix}\in  \mathbb{R}^{(k+1)\times k} , \ \
\widehat{B}_{k}=\begin{pmatrix}
\hat{\alpha}_{1} &\hat{\beta}_{1} & & \\
&\hat{\alpha}_{2} &\ddots & \\
& &\ddots &\hat{\beta}_{k-1} \\
& & &\hat{\alpha}_{k}
\end{pmatrix}\in  \mathbb{R}^{k\times k}  .
\end{equation}
The two processes produce four column orthonormal matrices, that is 
\begin{equation}\label{1.3}
U_{k+1}=(u_{1},\dots,u_{k+1}) \in \mathbb{R}^{m\times (k+1)} , \ \ 
V_{k}=(v_{1},\dots,v_{k}) \in \mathbb{R}^{n\times k}
\end{equation}
computed by the BIDIAG-1 algorithm, and 
\begin{equation}\label{1.4}
\widehat{U}_{k}=(\hat{u}_{1},\dots,\hat{u}_{k}) \in \mathbb{R}^{p\times k}, \ \ 
\widehat{V}_{k}=(\hat{v}_{1},\dots,\hat{v}_{k}) \in \mathbb{R}^{n\times k} 
\end{equation}
computed by the BIDIAG-2 algorithm.

In order to combine BIDIAG-1 and BIDIAG-2, the starting vector of BIDIAG-2 is
chosen to be $\hat{v}_{1}= v_{1}$ and the upper bidiagonalization of $Q_{L}$ continues. It is proved in \cite{Zha1996,Kilmer2007} that the Lanczos vector $\hat{v}_{i}$ and the element $\hat{\beta}_{i}$ of $\widehat{B}_{k}$ can be computed by using the following relations:
\begin{equation}\label{1.5}
\hat{v}_{i+1} = (-1)^{i}v_{i+1} , \ \ 
\hat{\beta}_{i} = \alpha_{i+1}\beta_{i+1}/\hat{\alpha}_{i} .
\end{equation}
For large-scale matrices $A$ and $L$, the explicitly $QR$ factorization \eqref{1.1} can be avoided by solving a least squares problem with $(A^{T}, L^{T})^{T}$ as the coefficient matrix iteratively at each iteration \cite{Bjorck1996,Paige1982}. Through the above modifications, we obtain the JBD process which can efficiently reduce a large-scale matrix pair $\{A,L\}$
to a bidiagonal matrix pair $\{B_{k}, \widehat{B}_{k}\}$. For details of the derivation of the algorithm, see \cite{Zha1996,Kilmer2007}. In exact arithmetic, the $k$-step JBD process explicitly computes three column orthonormal matrices $U_{k+1}$, $\widetilde{V}_{k}$, $\widehat{U}_{k}$, a lower bidiagonal matrix $B_{k}$ and an upper bidiagonal matrix $\widehat{B}_{k}$. The two column orthonormal matrices $V_{k}$ and $\widehat{V}_{k}$ can be obtained from $\widetilde{V}_{k}$ implicitly by letting $V_{k}=Q^{T}\widehat{V}_{k}$ and $\widehat{V}_{k}=V_{k}P$, where $ P=diag(1,-1,\dots ,(-1)^{k-1})$. 

The JBD process can be used to approximate a few largest or smallest generalized singular values and corresponding vectors of $\{A,L\}$ by projecting the original large-scale problem to the reduced small-scale problem $\{B_{k}, \widehat{B}_{k}\}$. Furthermore, Kilmer \textit{et al.} \cite{Kilmer2007} present an iterative method based on the JBD process to solve ill-posed problems with general-form Tikhonov regularization. The main idea is to use the projection method to solve a series of small-scale general-form Tikhonov regularization problems which lies in lower dimensional subspaces. Jia and Yang \cite{JiaYang2018} have analyzed this iterative regularized method and they present a new iterative regularized algorithm.

In exact arithmetic, the $k$-step JBD algorithm is equivalent to the combination of the lower and upper Lanczos bidiagonalization processes. The lower Lanczos bidiagonalization process computes two column orthonormal matrices $U_{k+1}$ and $V_{k}$, while the upper Lanczos bidiagonalization process computes two column orthonormal matrices $\widehat{U}_{k}$ and $\widehat{V}_{k}$. In finite precision arithmetic, however, the orthogonality of Lanczos vectors computed by the JBD process is gradually lost, which is due to the influence of rounding errors. For the GSVD computation, the loss of orthogonality of Lanczos vectors will lead to a delay of the convergence of Ritz values and it causes the appearance of spurious generalized singular values, which are called ``ghosts" \cite{Zha1996,Li2019}. In order to preserve the convergence of the approximate generalized singular values, we need to perform the JBD process with a reorthogonalization strategy to maintain some level of orthogonality of the Lanczos vectors.

The loss of orthogonality of Lanczos vectors is a typical phenomenon appeared in the Lanczos-type algorithms, which is first observed in the symmetric Lanczos process \cite{Lanczos1950}. It will lead to a delay of convergence in the computation of some extreme eigenvalues of a symmetric matrix \cite{Paige1971,Paige1972,Paige1980,Meurant2006}, and sometimes it is also difficult to determine whether some computed approximations are additional copies or genuine close eigenvalues \cite{Paige1971,Paige1972,Paige1976,Paige1980}. In order to preserve the convergence, a few reorthogonalization strategies have been proposed to maintain some level of orthogonality \cite{Parlett1979,Simon1984a,Simon1984b,Parlett1992}. Especially, Simon \cite{Simon1984b} proves that semiorthogonality of Lanczos vectors is enough to guarantee the accuracy of the computed quantities and avoid spurious eigenvalues from appearing. The above results of the symmetric Lanczos process have been adapted by Larsen to handle the Lanczos bidiagonalization process, and he proposes the Lanczos bidiagonalization with partial reorthogonalization algorithm \cite{Larsen1998}, which can save a big amount of reorthogoanlization work compared with the full reorthogonalization strategy. In \cite{SimonZha2000}, Simon and Zha propose a one-sided reorthogonalization strategy for the Lanczos bidiagonalization process. Later in \cite{Barlow2013}, the Lanczos bidiagonalization process with the one-sided reorthogonalization has been analyzed in detail by Barlow.

In this paper, we propose a semiorthogonalization strategy for the $k$-step JBD process to keep the orthogonality levels of $u_{i}$, $\tilde{v}_{i}$ and $\hat{u}_{i}$ below $\sqrt{\epsilon/(2k+1)}$, where $\epsilon$ is the roundoff unit. We make a rounding error analysis of the JBD process with the semiorthogonalization strategy, which establishes connections between the JBD process with the semiorthogonalization strategy and the Lanczos bidiagonalization process in finite precision arithmetic. The approximate generalized singular values of $\{A,L\}$ can be computed by using the singular value decomposition (SVD) of either $B_{k}$ or $\widehat{B}_{k}$ \cite{Zha1996,Li2019}. We will prove that semiorthogonality of the Lanczos vectors are enough to preserve convergence of the Ritz values computed from either $B_{k}$ or $\widehat{B}_{k}$, and the generalized singular values can be approximated with high accuracy by using the SVD
of $B_k$, while the accuracy of the approximated generalized singular values computed from $\widehat{B}_{k}$ is high enough as long as $\|\widehat{B}_{k}^{-1}\|$ does not become too large.

Based on the semiorthogonalization strategy, we develop a practical algorithm called the joint bidiagonalization process with partial reorthogonalization(JBDPRO). The central idea in partial reorthogonalization is that the levels of orthogonality among the Lanczos vectors satisfy a coupled recurrence relations \cite{Simon1984b,Larsen1998}, which can be used as a practical tool for computing estimates of the levels of orthogonality in an efficient way and to decide when to reorthogonalize, and which Lanczos vectors are necessary to include in the reorthogonalization step. Numerical experiments shows that our JBDPRO algorithm is more efficient than the joint bidiagonalization process with full reorthognalization(JBDFRO), while can avoid ``ghosts" from appearing.

This paper is organized as follows. In Section \ref{sec2}, we review the JBD process with some properties, and we review the GSVD computation based on the JBD process. In Section \ref{sec3}, we propose a semiorthogonalization strategy, and make a detailed analysis of the JBD process with the semiorthogonalization strategy. Based on the semiorthogonalization strategy, in Section \ref{sec4}, we develop the JBDPRO algorithm. In Section \ref{sec5}, we use some numerical examples to illustrate our theory and algorithm. Finally, we conclude the paper in Section \ref{sec6}.

Throughout the paper, we denote by $I_k$ the identity matrix of order $k$, by $0_k$ and $0_{k\times l}$ the zero matrices of order $k$ and $k\times l$, respectively. The subscripts are omitted when there is no confusion. We denote by $span(C)$ the subspace spanned by columns of a matrix $C$. The transpose of a matrix $C$ is denoted by $C^{T}$. The roundoff unit is denoted by $\epsilon$. The norm $\|\cdot \|$ always means the spectral or 2-norm of a matrix or vector.

%%%%%%%%%%%%%%%%%%%%%%%%%%%%%%%%%%%%%%%%%%%%%%%%%%%%%%%%%%%%%%%%%
\section{Joint bidiagonalization process and GSVD computation}\label{sec2}
In this section, we review the joint bidiagonalization process and its basic properties in both exact and finite precision arithmetic. We also describe the GSVD computation of $\{A,L\}$ based on the JBD process.

The joint bidiagonalization process is described in Algorithm \ref{alg1}. Notice that for large-scale matrices $A$ and $L$, the explicitly $QR$ factorization \eqref{1.1} is impractical due to efficiency and storage. At each iteration $i=1,2,\ldots,k+1$, Algorithm~\ref{alg1} needs to compute $QQ^{T} \begin{pmatrix}
u_i \\ 0_p 
\end{pmatrix}$, which is not accessible since $Q$ is not available. Let $\tilde{u}_i=\begin{pmatrix}
u_i \\ 0_p 
\end{pmatrix}$. Notice that $QQ^T\tilde{u}_i$ is nothing but the orthogonal projection
of $\tilde{u}_i$ onto the column space of 
$\begin{pmatrix} A \\ L
\end{pmatrix}$, which means that $QQ^T\tilde{u}_i=\begin{pmatrix}
A \\ L 
\end{pmatrix}\tilde{x}_i$, where
\begin{equation}\label{2.1}
\tilde{x}_i=\arg\min_{\tilde{x}\in \mathbb{R}^n}
\left\|\begin{pmatrix}
A \\ L 
\end{pmatrix}
\tilde{x}-\tilde{u}_i\right\|.
\end{equation}
The large-scale least squares problem \eqref{2.1} can be solved by an iterative solver, e.g., the most commonly used LSQR algorithm \cite{Paige1982}.
\begin{algorithm}[htb]
	\caption{The $k$-step joint bidiagonalization(JBD) process }
	\begin{algorithmic}[1]\label{alg1}
		\STATE {Choosing a starting vector $b \in \mathbb{R}^{m}$,
			$\beta_{1}u_{1}=b,\ \beta_{1}=\| b\|$ }
		\STATE {$\alpha_{1}\tilde{v}_{1}=QQ^{T}\begin{pmatrix}
			u_{1} \\
			0_{p}
			\end{pmatrix} $}
		\STATE {  $\hat{\alpha}_{1}\hat{u}_{1}=\tilde{v}_{1}(m+1:m+p) $}
		\FOR{$i=1,2,\ldots,k,$}
		\STATE $\beta_{i+1}u_{i+1}=\tilde{v}_{i}(1:m)-\alpha_{i}u_{i} $
		\STATE $ \alpha_{i+1}\tilde{v}_{i+1}=
		QQ^{T}\begin{pmatrix}
		u_{i+1} \\
		0_{p}
		\end{pmatrix}-\beta_{i+1}\tilde{v}_{i} $
		\STATE $\hat{\beta}_{i}=(\alpha_{i+1}\beta_{i+1})/\hat{\alpha}_{i} $
		\STATE $\hat{\alpha}_{i+1}\hat{u}_{i+1}=
		(-1)^{i}\tilde{v}_{i+1}(m+1:m+p)-\hat{\beta}_{i}\hat{u}_{i} $
		\ENDFOR
	\end{algorithmic}
\end{algorithm}

Algorithm \ref{alg1} is actually an approach to jointly bidiagonalized $Q_{A}$ and $Q_L$ as a prelude to the $CS$ decomposition of $\{Q_{A},Q_{L}\}$, where the computation of $QR$ factorization \eqref{1.1} is avoided and all we need is an approximation to the orthogonal projection $QQ^{T}$, which can be accessed by solving \eqref{2.1} iteratively. In exact arithmetic, the $k$-step JBD process produces two bidiagonal matrices $B_{k}$, $\widehat{B}_{k}$ and three column orthonormal matrices $U_{k+1}$, $\widehat{U}_{k}$ and
\begin{align}\label{2.2}
\widetilde{V}_{k}=(\tilde{v}_{1},\dots,\tilde{v}_{k}) \in \mathbb{R}^{(m+p)\times k} 
\end{align}
satisfying $\tilde{v}_{i}=Qv_{i}$. We have $v_{i}=Q^{T}\tilde{v}_{i}$ and $\hat{v}_{i}=(-1)^{i-1}v_{i}$, which can be obtained implicitly from $\tilde{v}_{i}$. The first $k$ steps of the recurrences from Algorithm \ref{alg1} are captured in matrix form as
\begin{align}
& (I_{m},0_{m\times p})\widetilde{V}_{k}=U_{k+1}B_{k} \label{2.3} , \\
& QQ^{T}
\begin{pmatrix}
U_{k+1} \\
0_{p\times (k+1)}
\end{pmatrix}
=\widetilde{V}_{k}B_{k}^{T}+\alpha_{k+1}\tilde{v}_{k+1}e_{k+1}^{T} \label{2.4}  ,  \\
& (0_{p\times m},I_{p})\widetilde{V}_{k}P=\widehat{U}_{k}\widehat{B}_{k} \label{2.5} ,
\end{align}
where $ P=diag(1,-1,1,\dots ,(-1)^{k-1})$, and $e_{k+1}$ is the $(k+1)$-th column of the identity matrix of order $k+1$.
In exact arithmetic, one can verify that 
\begin{align}
&Q_AV_k=U_{k+1}B_k,\ \ Q_A^TU_{k+1}=V_kB_k^T+\alpha_{k+1}v_{k+1}e_{k+1}^T,\label{2.6}\\
&Q_L\widehat{V}_k=\widehat{U}_k\widehat{B}_k,\ \ Q_L^T\widehat{U}_k
=\widehat{V}_k\widehat{B}_k^T+\hat{\beta}_k\hat{v}_{k+1}e_k^T , \label{2.7}
\end{align}
where $e_{k}$ the $k$-th column of the identity matrix of order $k$. Therefore, the JBD process of $\{A, L\}$ is equivalent to the combination of the lower and upper Lanczos bidiagonalizations of $Q_{A}$ and $Q_{L}$.  

%%%%%%%%%%%%%%%%%%%% GSVD  %%%%%%%%%%%%%%%%%%%%%%%%%%%%%%%%%%%%%%%%%%%%%%%%
The JBD process can be used to approximate some extreme generalized singular values and vectors of a large sparse or structured matrix pair $\{A,L\}$. We first describe the GSVD of $\{A,L\}$. Let 
\begin{equation}\label{2.8}
Q_{A} = P_{A}C_{A}W^{T}  , \ \  Q_{L} = P_{L}S_{L}W^{T}
\end{equation}
be the $CS$ decomposition of the matrix pair $\{Q_{A}, Q_{L} \}$ \cite{Van1985}, where $P_{A}=(p_{1}^{A}, \dots,p_{m}^{A})\in \mathbb{R}^{m\times m}$, $P_{L}=(p_{1}^{L}, \dots,p_{p}^{L})\in \mathbb{R}^{p\times p}$ and $W=(w_{1},\dots, w_{n})\in\mathbb{R}^{n\times n}$
are orthogonal matrices, and $C_{A}\in\mathbb{R}^{m\times n}$ and $S_{L}\in\mathbb{R}^{p\times n}$ are diagonal matrices(not necessarily square) satisfying
$C_{A}^{T}C_{A}+S_{L}^{T}S_{L}=I_{n}$. If we add the assumption that $(A^{T}, L^{T})^{T}$ has full column rank, the GSVD of $\{A, L\}$ is
\begin{equation}\label{2.9}
A = P_{A}C_{A}G^{-1}  , \ \  L = P_{L}S_{L}G^{-1}
\end{equation}
with $G=R^{-1}W\in\mathbb{R}^{n\times n}$. The $i$-th generalized singular value of $\{A,L\}$ is $c_{i}/s_{i}$, while the $i$-th corresponding generalized singular vectors are $g_{i}=R^{-1}w_{i}$, $p^{A}_{i}$ and $p_{i}^{L}$. We call $g_{i}$ the $i$-th right generalized singular vector, $p^{A}_{i}$ and $p_{i}^{L}$ the $i$-th left generalized singular vectors corresponding to $A$ and $L$, respectively. Since $c_{i}/s_{i}=\infty$ when $s_{i}=0$, we use the number pair $\{c_{i}, s_{i}\}$ to denote $c_{i}/s_{i}$.
   
After the $k$-step JBD process of $\{A, L\}$, we have computed $B_{k}$ and $\widehat{B}_{k}$. Let us assume that we have computed the compact SVD of $B_{k}$:
\begin{equation}\label{2.10}
	B_{k} = P_{k}\Theta_{k}W_{k}^{T}, \ \ \Theta_{k}=diag(c_{1}^{(k)}, \dots, c_{k}^{(k)}), \ \ 
	1 \geq c_{1}^{(k)} > \dots > c_{k}^{(k)} \geq 0 \ ,
\end{equation}
where $P_{k}=(p_{1}^{(k)}, \dots, p_{k}^{(k)})\in\mathbb{R}^{(k+1)\times k}$ and $W_{k}=(w_{1}^{(k)}, \dots, w_{k}^{(k)})\in\mathbb{R}^{k\times k}$ are column orthonormal, and $\Theta_{k}\in\mathbb{R}^{k\times k}$. The decomposition \eqref{2.10} can be achieved by a
variety of methods since $B_k$ is a bidiagonal matrix of relatively small
dimension. The approximate generalized singular value of $\{A, L\}$ is $\{c_{i}^{(k)}, (1-(c_{i}^{(k)})^{2})^{1/2}\}$, while the approximate right vector is $x_{i}^{(k)}=R^{-1}V_{k}w_{i}^{(k)}$ and the approximate left vector corresponding to $A$ is $y_{i}^{(k)} = U_{k+1}p_{i}^{(k)}$. 
For large-scale matrices $A$ and $L$, the explicit computation of $R^{-1}$ can be avoided. Notice that
$$\begin{pmatrix}
A \\
L
\end{pmatrix}x_{i}^{(k)}=QRR^{-1}V_{k}w_{i}^{(k)}=\widetilde{V}_{k}w_{i}^{(k)} .$$
Hence by solving a least squares problem, we can obtain $x_{i}^{(k)}$ from $\widetilde{V}_{k}w_{i}^{(k)}$.
If we also want to compute the approximate left generalized singular vectors corresponding to $L$, we need to compute the SVD of $\widehat{B}_{k}$. The approximate generalized singular values and corresponding right vectors can also be computed from the SVD of $\widehat{B}_{k}$. The procedure is similar as the above and we omit it; for details see \cite{Zha1996,Li2019}. 

The above method for computing the GSVD of $\{A,L\}$ is an indirect procedure to compute the $CS$ decomposition \eqref{2.8} of $\{Q_{A},Q_{L}\}$, where the computation of $QR$ factors $Q$ and $R$ is avoided and all we need is an approximation to the orthogonal projection $QQ^{T}$, which can be accessed by solving \eqref{2.1} iteratively.

%%%%%%%%%%%%%%%%%%%% JBD in finite precision arithmetic  %%%%%%%%%%%%%%%%%%%%%%%%%%%
In finite precision arithmetic, by the influence of rounding errors, the behavior of the JBD process will deviate far from the ideal case in exact arithmetic, and the convergence and accuracy of the approximate generalized singular values and vectors computed by using the JBD process will be affected. The rounding error analysis of the JBD process in finite precision arithmetic is based on a set of assumptions and properties of the behavior of the rounding errors occurring, which constitutes a rational model for the actual computation. We state them here following \cite{Li2019}.  

First, we always assume that \eqref{2.1} is solved accurately at each iteration. Thus the computed 
$\begin{pmatrix}
A \\ L 
\end{pmatrix}\tilde{x}_i$
is equal to the value of
$QQ^{T} \begin{pmatrix}
u_i \\ 0_p 
\end{pmatrix}$
computed by explicitly using the strictly column orthonormal matrix $Q$. Second, the rounding errors appeared in the computation at each step are assumed to be of order $O(\epsilon)$. Third, the property of local orthogonality of $u_{i}$ and $\hat{u}_{i}$ holds, that is, locally the orthogonality levels of $u_{i}$ and $\hat{u}_{i}$ satisfy the following relations respectively:
\begin{align}
& \beta_{i+1}|u_{i+1}^{T}u_{i}| = O(c_{1}(m,n)\epsilon) , \label{2.11} \\
& \hat{\alpha}_{i+1}|\hat{u}_{i+1}^{T}\hat{u}_{i}| = O(c_{2}(p,n)\epsilon) ,\label{2.12} 
\end{align}
where $c_{1}(m,n)$ and $c_{2}(p,n)$ are two modestly growing functions of $m$, $n$ and $p$. Finally, we assume that 
\begin{equation}\label{2.13}
no \ \ \alpha_{i}, \ \beta_{i+1}, \ \hat{\alpha}_{i} \ and \  \hat{\beta}_{i} \  ever \  become \  negligible ,
\end{equation}
which is almost always true in practice, and the rare cases where $\alpha_{i}$, $\beta_{i+1}$, $\hat{\alpha}_{i}$ or $\hat{\beta}_{i}$ do become small are actually the lucky ones, since then the algorithm should be terminated, having found an invariant singular subspace. Besides, we always assume that the computed Lanczos vectors are of unit length. 

Under the above assumptions, it has been shown in \cite{Li2019} that 
\begin{equation}\label{2.14}
\lVert \widetilde{V}_{k} - QV_{k} \lVert 
= O(\lVert \underline{B}_{k}^{-1}\lVert\epsilon) 
\end{equation}
with
$\underline{B}_{k}=\begin{pmatrix}
B_{k-1}^{T} \\
\alpha_{k}e_{k}^{T}
\end{pmatrix}\in \mathbb{R}^{k\times k}$, which implies that $\widetilde{V}_{k}$ gradually deviates from the column space of $Q$ as the iterations progress. Furthermore, the following four relations hold:
\begin{align}
& Q_{A}V_{k}=U_{k+1}B_{k}+F_{k} , \ \ 
Q_{A}^{T}U_{k+1}=V_{k}B_{k}^{T}+\alpha_{k+1}v_{k+1}e_{k+1}^{T}+G_{k+1} \label{2.15}  ,  \\
& Q_{L}\widehat{V}_{k}=\widehat{U}_{k}\widehat{B}_{k}+\widehat{F}_{k}, \ \ \ \ \ \ 
Q_{L}^{T}\widehat{U}_{k}=\widehat{V}_{k}\widehat{B}_{k}^{T}\ +\hat{\beta}_{k}\hat{v}_{k+1}e_{k}^{T} + \widehat{G}_{k} \label{2.16}  ,
\end{align}
where 
\begin{align}
& \| F_{k} \| = O(\|\underline{B}_{k}^{-1}\|\epsilon), \label{2.17} \ \ 
\|G_{k+1}\|= O(\epsilon), \\
& \|\widehat{F}_{k}\| = O(\|\underline{B}_{k}^{-1}\|\epsilon), \ \  \|\widehat{G}_{k}\|=O((\|\underline{B}_{k}^{-1}\|+\|\widehat{B}_{k}^{-1}\|)\epsilon) \label{2.18} .
\end{align}

\begin{remark}\label{rem2.1}
The growth speed of $\|\underline{B}_{k}^{-1}\|$ can be controlled. In the GSVD computation problems, usually at least one matrix of $\{A, L\}$ is well conditioned, which results to that at least one of $\{Q_{A}, Q_{L}\}$ is well conditioned. If $Q_{A}$ is the well conditioned one, we implement the JBD process of $\{A, L\}$, while if $Q_{L}$ is the well conditioned one, we implement the JBD process of $\{L, A\}$.  By this modification, we could always make sure that $\underline{B}_{k}$ is a well conditioned matrix and $\|\underline{B}_{k}^{-1}\|$ does not become too large. 
\end{remark}

Following Remark \ref{rem2.1}, we can always assume that $\|\underline{B}_{k}^{-1}\|=O(1)$. Thus we can make sure that $\tilde{v}_{i}$ is approximately in the subspace spanned by the columns of $Q$ within error $O(\epsilon)$, and $\| F_{k} \|, \ \|\widehat{F}_{k}\|$ are about $O(\epsilon)$. Therefore, \eqref{2.15} indicates that the process of computing $U_{k+1}$, $V_{k}$ and $B_{k}$ can be treated as the lower Lanczos bidiagonalization of $Q_{A}$ within error $O(\epsilon)$. However, if $\|\widehat{B}_{k}^{-1}\|$ becomes too large, the process of computing $\widehat{U}_{k}$, $\widehat{V}_{k}$ and $\widehat{B}_{k}$ will deviate far from the upper Lanczos bidiagonalization of $Q_{L}$.

In finite precision arithmetic, the Lanczos vectors computed by the JBD process gradually lose their mutual orthogonality as the iteration number $k$ increases. Following \cite{Li2019}, we give the definition of the orthogonality level of a group of vectors.

\begin{Def}\label{def2.1}
For a matrix $W_{k}=(w_{1}, \dots, w_{k})\in \mathbb{R}^{r\times k}$ with $\lVert w_{j}\lVert=1$, $j=1,\dots,k$, we give two measures of the orthogonality level of $\{w_{1}, \dots, w_{k}\}$ or $W_{k}$:
$$\kappa(W_{k})=\max_{1\leq i\neq j \leq k}|w_{i}^{T}w_{j}| , \ \ \ \ 
\xi(W_{k}) = \| I_{k}-W_{k}^{T}W_{k}\| . $$
\end{Def}

In the following analysis, we often use terminology ``the orthogonality level of $w_{i}$'' for simplicity, which means the orthogonality level of $\{w_{1}, \dots, w_{k}\}$. Notice that $\kappa(W_{k})\leq\xi(W_{k})\leq k\kappa(W_{k})$. In most occasions, the two quantities can be used interchangeably to measure the orthogonality level of Lanczos vectors. We call $w_{i}$ ``semiorthogonal" if its orthogonality level is about $\sqrt{\epsilon}$. Using the method appeared in \cite{Barlow2013}, we can obtain $\|W_{k}\| \leq \sqrt{1 + \xi(W_{k})}$. This upper bound will be used later.
%Let $\sigma_{i}(\cdot)$ and $\lambda_{i}(\cdot)$ be the $i$-th largest singular value and eigenvalue of a matrix respectively, then
%$$\sigma_{1}^{2}(W_{k}) 
%= \lambda_{1}(W_{k}^{T}W_{k})
%= 1 + \lambda_{1}(W_{k}^{T}W_{k}-I_{k}) 
%\leq 1 + \lVert I_{k}-W_{k}^{T}W_{k}\lVert ,$$
%which leads to
%\begin{equation}\label{2.18}
%\|W_{k}\| \leq \sqrt{1 + \eta(W_{k})} .
%\end{equation}

%It has been shown in \cite{Li2019} that
%$|\xi(\widetilde{V}_{k})-\xi(V_{k})|
%= O(\lVert \underline{B}_{k}^{-1}\lVert^{2}\epsilon^{2}). $
%Since it is usually that $\epsilon$ is about $10^{-16}$ or even smaller, we have $|\xi(\widetilde{V}_{k})-\xi(V_{k})|=O(\epsilon)$, and thus $\xi(V_{k}) \approx \xi(\widetilde{V}_{k})$ and $\kappa(V_{k})\approx \kappa(\widetilde{V}_{k})$ in finite precision arithmetic.

If we use the JBD process to approximate some generalized singular values and vectors of $\{A,L\}$, the loss of orthogonality of Lanczos vectors will lead to a delay of the convergence of Ritz values and the appearance of ``ghosts". To preserve the convergence, one can use the full reorthogonalization for $u_{i},\hat{u}_{i}$ and $\tilde{v}_{i}$ at each iteration, to make sure that the orthogonality levels of $u_{i},\hat{u}_{i}$ and $\tilde{v}_{i}$ are about $O(\epsilon)$. The disadvantage of full reorthogonalization strategy is that it will cause too much extra computation. It has been shown in \cite{Li2019} that semiorthogonality of Lanczos vectors are enough to guarantee the accuracy of the approximate generalized singular values and avoid ``ghosts" from appearing. In the next section, we will propose a semiorthogonalization strategy, and make a detailed analysis of the JBD process equipped with the semiorthogonalization strategy.

%%%%%%%%%%%%%%%%%%%%%%%%%%%%%%%%%%%%%%%%%%%%%%%%%%%%%%%%%%%%%%%%%
\section{A semiorthogonalization strategy for the JBD process}\label{sec3}
Now we introduce a semiorthogonalization strategy for the JBD process. The semiorthogonalization strategy is similar to that proposed by Simon for the symmetric Lanczos process \cite{Simon1984b}. We use the reorthogonalization of $u_{i+1}$ to describe it. Let $\omega_{0}=\sqrt{\epsilon/(2k+1)}$. 
%For the $k$-step JBD process, in order to keep the orthogonality level of $U_{k+1}$ below $\omega_{0}=\sqrt{\epsilon/(2k+1)}$, at the $i$-th step we need to reorthogonalize $u_{i+1}$. 
At the $i$-th step, suppose that
$$\beta_{i+1}^{'}u_{i+1}^{'}=\tilde{v}_{i}(1:m)-\alpha_{i}u_{i}-f_{i}^{'} .$$
If $|u_{i+1}^{'T}u_{j}| > \omega_{0}$ for some $j < i$, then we choose $i-1$ real numbers $\xi_{1i}, \dots, \xi_{i-1,i}$, and form 
$$\beta_{i+1}u_{i+1} =\beta_{i+1}^{'}u_{i+1}^{'}- 
	\sum\limits_{j=1}^{i-1}\xi_{ji}u_{j}-f_{i}^{''} .$$
In the above equations, $f_{i}^{'}$ and $f_{i}^{''}$ are rounding error terms appeared in the computation. The algorithm will be continued with $u_{i+1}$ instead of $u_{i+1}^{'}$. 

\begin{Def}\label{Def2}
	The above modification of the JBD process will be called a semiorthogonalization stategy for $u_{i+1}$ if the following conditions are satisfied: \\
	(1) The numbers $\xi_{1i}, \dots, \xi_{i-1,i}$ are chosen such that
	\begin{equation}\label{3.1}
		u_{i+1}^{T}u_{j} \leq \omega_{0} \ , \ j = 1, \dots, i .
	\end{equation}
	\\
	(2) The computation of $u_{i+1}$ can be written as
	\begin{equation}\label{3.2}
	\beta_{i+1}u_{i+1} = \tilde{v}_{i}(1:m)-\alpha_{i}u_{i} - 
	\sum\limits_{j=1}^{i-1}\xi_{ji}u_{j}-f_{i}  ,
	\end{equation}
	where $f_{i} = f_{i}^{'}+f_{i}^{''}$ is the rounding error term, satisfying $\|f_{i}\| = O(q_{1}(m,n)\epsilon)$ with $q_{1}(m,n)$ a modestly growing function of $m$ and $n$.
\end{Def}

The semiorthogonalization stategy for $\tilde{v}_{i+1}$  and $\hat{u}_{i+1}$ are similar, and the corresponding $i$-th step recurrences are
\begin{align}
	& \alpha_{i+1}\tilde{v}_{i+1}=
	QQ^{T}\begin{pmatrix}
		u_{i+1} \\
		0_{p}
	\end{pmatrix}-\beta_{i+1}\tilde{v}_{i} - \sum\limits_{j=1}^{i-1}\eta_{ji+1}\tilde{v}_{j} -
	g_{i+1} , \label{3.3} \\
	& \hat{\alpha}_{i+1}\hat{u}_{i+1}=
	(-1)^{i}\tilde{v}_{i+1}(m+1:m+p)-\hat{\beta}_{i}\hat{u}_{i} - \sum\limits_{j=1}^{i-1}\hat{\xi}_{ji+1}\hat{u}_{j}-\hat{f}_{i+1} , \label{3.4}
\end{align}
where $\|g_{i+1}\|=O(q_{2}(m,p)\epsilon)$ and $\|\hat{f}_{i+1}\|=O(q_{3}(p,n)\epsilon)$ with $q_{2}(m,p)$ and $q_{3}(p,n)$ two modestly growing functions of $m$, $n$ and $p$.

Notice that the reorthogonalization of $u_{i+1}$ does not use the vector $u_{i}$, due to the property of local orthogonality among $u_{i}$ and $u_{i+1}$. The reasons are similar for the reorthogonalizations of $\tilde{v}_{i+1}$ and $\hat{u}_{i+1}$. After the semiorthogonalization step, relations \eqref{2.11} and \eqref{2.12} will still hold.

After $k$ steps, we have computed three groups of Lanczos vectors $\{u_{1}, \dots, u_{k+1}\}$,
$\{\tilde{v}_{1}, \dots, \tilde{v}_{k}\}$ and $\{\hat{u}_{1}, \dots, \hat{u}_{k}\}$, of which orthogonality levels are below $\omega_{0}$. The first $k$ steps of the recurrences are captured in matrix form as
\begin{align}
	& (I_{m},0_{m\times p})\widetilde{V}_{k}=U_{k+1}(B_{k}+C_{k})+F_{k} \label{3.5}  ,  \\
	& QQ^{T}
	\begin{pmatrix}
		U_{k+1} \\
		0_{p\times (k+1)}
	\end{pmatrix}
	=\widetilde{V}_{k}(B_{k}^{T}+D_{k})+\alpha_{k+1}\tilde{v}_{k+1}e_{k+1}^{T}+G_{k+1} \label{3.6} , \\
	& (0_{p\times m},I_{p})\widetilde{V}_{k}P=\widehat{U}_{k}(\widehat{B}_{k}+\widehat{C}_{k})+\widehat{F}_{k} \label{3.7} ,
\end{align}
where $\widetilde{F}_{k}=(f_{1}, \dots, f_{k}), \ \widetilde{G}_{k+1} = (g_{1}, \dots, g_{k+1}), \ \bar{F}_{k}=(\hat{f}_{1}, \dots, \hat{f}_{k})$, and 
\begin{align*}
		C_{k}=\begin{pmatrix}
			0 &\xi_{12} &\dots &\xi_{1k} \\
			0& 0&\cdots &\xi_{2k} \\
			& 0&\ddots &\vdots \\
			& &\ddots  &0 \\
			& &&0
		\end{pmatrix} \in \mathbb{R}^{(k+1)\times k}  , \ \
		\widehat{C}_{k} = \begin{pmatrix}
			0  &0 &\hat{\xi}_{13} &\cdots &\hat{\xi}_{1k} \\
			&0 & 0 &\cdots &\hat{\xi}_{2k} \\
			& & \ddots &\ddots &\vdots \\
			& & & 0 &0 \\
			& & & & 0
		\end{pmatrix} \in \mathbb{R}^{k\times k}  , \\
		D_{k}=\begin{pmatrix}
			0 &0 &\eta_{13} &\cdots &\eta_{1k} &\eta_{1k+1} \\
			&0 &0 &\eta_{24} &\cdots &\eta_{2k+1} \\
			& &\ddots &\ddots &\ddots &\vdots \\
			& & &\ddots &0 &\eta_{k-1,k+1} \\
			& & & &0 & 0
		\end{pmatrix} \in \mathbb{R}^{k \times (k+1)} .
\end{align*}

Notice that $\tilde{v}_{i}$ is approximately in the subspace spanned by the columns of $Q$ within error $O(\epsilon)$ for $i = 1, \dots, k$. If we let $v_{i}=Q^{T}\tilde{v}_{i}$ and $\hat{v}_{i}=(-1)^{i-1}v_{i}$, then $\widetilde{V}_{k}=QV_{k}+O(\epsilon)$, and $\{v_{1}, \dots, v_{k}\}$ and $\{\hat{v}_{1}, \dots, \hat{v}_{k}\}$ are also kept semiorthogonal. From \eqref{3.5}--\eqref{3.7} we can obtain
\begin{align}
	& Q_{A}V_{k}=U_{k+1}(B_{k}+C_{k})+F_{k} , \label{3.8} \\
	& Q_{A}^{T}U_{k+1}=V_{k}(B_{k}^{T}+D_{k})+
	\alpha_{k+1}v_{k+1}e_{k+1}^{T}+G_{k+1} \label{3.9} , \\
	& Q_{L}\widehat{V}_{k}=\widehat{U}_{k}(\widehat{B}_{k}+\widehat{C}_{k})+\widehat{F}_{k} , \label{3.10}
\end{align}
where $\|F_{k}\|=O(q_{1}(m,n)\epsilon)$, $\|G_{k+1}\|=O(q_{2}(m,p)\epsilon)$ and $\|\widehat{F}_{k}\|=O(q_{3}(p,n)\epsilon)$. We point out that the rounding error terms $F_{k}$, $G_{k+1}$ and $\widehat{F}_{k}$ here are different from that appeared in relations \eqref{2.15}--\eqref{2.18}, and we use the same notations just for simplicity. 

The following two lemmas describe some basic properties of the JBD process with the semiorthogonalization strategy. The proofs are given in the Appendix \ref{Apd}.
\begin{Lem}\label{Lem3.1}
	For the JBD process with the semiorthogonalization strategy, the relation
	\begin{equation}\label{3.11}
	Q_{L}^{T}\hat{u}_{i} \in span\{\hat{v}_{1}, \dots, \hat{v}_{i+1}\} + O(\bar{q}(m,n,p)\epsilon) .
	\end{equation}
	holds for all $i = 1, 2, \dots $, where $\bar{q}(m,n,p)=q_{1}(m,n)+q_{2}(m,p)+q_{3}(p,n)$.
\end{Lem}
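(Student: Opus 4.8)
The plan is to exploit the column orthonormality of $Q$, which gives $Q_A^TQ_A+Q_L^TQ_L=I_n$, so that the ill-understood quantity $Q_L^T\hat u_i$ can be rewritten in terms of the \emph{lower}-bidiagonalization side, where the well-behaved relations \eqref{3.8}--\eqref{3.9} are available. Since $\hat v_i=(-1)^{i-1}v_i$, we have $span\{\hat v_1,\dots,\hat v_{i+1}\}=span\{v_1,\dots,v_{i+1}\}$, so I would work with the $v_i$ and restore the signs at the end.

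First I would prove the amplification-free auxiliary fact $Q_L^TQ_Lv_i\in span\{v_1,\dots,v_{i+1}\}+O(\bar q(m,n,p)\epsilon)$. Writing $Q_L^TQ_L=I_n-Q_A^TQ_A$, substituting \eqref{3.8} into $Q_A^TQ_AV_k$ and then \eqref{3.9} for $Q_A^TU_{k+1}$, and using $e_{k+1}^T(B_k+C_k)=\beta_{k+1}e_k^T$, one obtains
\[
Q_L^TQ_LV_k=V_k\bigl[I_k-(B_k^T+D_k)(B_k+C_k)\bigr]-\alpha_{k+1}\beta_{k+1}v_{k+1}e_k^T-E_k,
\]
with $E_k=G_{k+1}(B_k+C_k)+Q_A^TF_k$. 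Because $B_k$ is lower bidiagonal, $C_k$ is strictly upper triangular and $D_k$ has its nonzeros on the second and higher superdiagonals, the $i$-th column of $(B_k^T+D_k)(B_k+C_k)$ is supported in rows $1,\dots,i+1$; hence the $i$-th column of the bracketed matrix lies in $span\{v_1,\dots,v_{i+1}\}$, the boundary term $v_{k+1}$ entering only when $i=k$. Using $\|B_k+C_k\|=O(1)$ together with $\|F_k\|=O(q_1\epsilon)$, $\|G_{k+1}\|=O(q_2\epsilon)$ gives $\|E_k\|=O(\bar q\epsilon)$. This bandwidth bookkeeping is the heart of the argument.

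Next I would feed this into the single relation \eqref{3.10} that actually involves $Q_L$. Multiplying \eqref{3.10} on the left by $Q_L^T$, replacing $Q_L^TQ_L\widehat V_k=(I_n-Q_A^TQ_A)\widehat V_k$ by the auxiliary identity, and setting $N_k=I_k-(B_k^T+D_k)(B_k+C_k)$, $M_k=PN_kP$ yields
\[
Q_L^T\widehat U_k(\widehat B_k+\widehat C_k)=\widehat V_kM_k+\alpha_{k+1}\beta_{k+1}\hat v_{k+1}e_k^T-E_kP-Q_L^T\widehat F_k,
\]
where $M_k$ inherits the same ``column $i$ supported in rows $\le i+1$'' structure. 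Since $\widehat B_k+\widehat C_k$ is upper triangular with the $\hat\alpha_i$ on its diagonal, reading off the $i$-th column is a forward substitution: $\hat\alpha_iQ_L^T\hat u_i$ equals the $i$-th column of the right-hand side minus $\sum_{j<i}(\widehat B_k+\widehat C_k)_{ji}Q_L^T\hat u_j$. The $\widehat V_kM_k$ contribution lies in $span\{\hat v_1,\dots,\hat v_{i+1}\}$, the inductively available $Q_L^T\hat u_j$ for $j<i$ lie in the nested subspaces $span\{\hat v_1,\dots,\hat v_{j+1}\}\subseteq span\{\hat v_1,\dots,\hat v_{i+1}\}$, and the rank-one $\hat v_{k+1}$-term contributes only at $i=k$. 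Thus the subspace membership in \eqref{3.11} propagates cleanly by induction on $i$.

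The main obstacle is controlling the error in this last forward substitution. Dividing by the $\hat\alpha_i$ and propagating the coefficients $(\widehat B_k+\widehat C_k)_{ji}$ is precisely an upper-triangular solve with $\widehat B_k+\widehat C_k$, which a priori magnifies the $O(\bar q\epsilon)$ residual $E_kP+Q_L^T\widehat F_k$ by a factor $\|(\widehat B_k+\widehat C_k)^{-1}\|\approx\|\widehat B_k^{-1}\|$; already the base case $\hat\alpha_1Q_L^T\hat u_1=Q_L^TQ_Lv_1+O(\epsilon)$ exhibits an $O(\epsilon/\hat\alpha_1)$ error. The delicate point is to argue that this accumulated error stays at the level $O(\bar q(m,n,p)\epsilon)$ asserted in \eqref{3.11}: one must invoke the persistence of local orthogonality \eqref{2.11}--\eqref{2.12} after the semiorthogonalization step to bound the off-diagonal reorthogonalization coefficients, and the standing assumption \eqref{2.13} that no $\hat\alpha_i$ becomes negligible, so that the triangular solve does not in fact inflate the residual. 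I expect verifying this error estimate to be the technically demanding part, whereas the subspace-membership statement itself follows directly from the bandwidth structure established above.
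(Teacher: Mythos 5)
Your proposal is correct and takes essentially the same route as the paper: both arguments rewrite $Q_L^TQ_L=I_n-Q_A^TQ_A$, use \eqref{3.8}--\eqref{3.9} to keep $Q_A^TQ_Av_i$ inside $span\{v_1,\dots,v_{i+1}\}$ up to $O(\bar q(m,n,p)\epsilon)$, and then work through the upper recurrence \eqref{3.10} by induction on $i$ — your matrix-form ``forward substitution'' through $\widehat{B}_k+\widehat{C}_k$ is exactly the paper's column-wise induction, just packaged globally. The amplification issue you flag at the end is not treated any more rigorously in the paper: it simply divides by $\hat{\alpha}_{i+1}$ at each step and absorbs those factors into the $O(\bar q(m,n,p)\epsilon)$ constant under assumption \eqref{2.13}, so your version is, if anything, more candid about this point.
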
 

\begin{Lem}\label{Lem3.2}
	For the $k$-step JBD process with the semiorthogonalization strategy, we have
	\begin{equation}\label{3.12}
	C_{k} = O(\sqrt{\epsilon}) , \ \ D_{k} = O(\sqrt{\epsilon}) , 
	\ \ \widehat{C}_{k} = O(\sqrt{\epsilon}) ,
	\end{equation}
	where $X=O(\sqrt{\epsilon})$ for a matrix $X$ means that all the elements of $X$ are of $O(\sqrt{\epsilon})$.
\end{Lem}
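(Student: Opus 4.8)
The plan is to bound the three coefficient matrices in two stages: first establish crude $O(1)$ bounds that break the apparent circular dependence among $C_k$, $D_k$ and $\widehat{C}_k$, and then refine each one to $O(\sqrt{\epsilon})$ by combining semiorthogonality with the sparsity structure of the matrices involved. Throughout I will use that $\|Q_A\|\le 1$ and $\|Q_L\|\le 1$ since both are submatrices of the column orthonormal $Q$, that $\widetilde{V}_k=QV_k+O(\epsilon)$ by \eqref{2.14} together with the standing assumption $\|\underline{B}_k^{-1}\|=O(1)$, and that the entries of $B_k$ and $\widehat{B}_k$ are $O(1)$, being the bidiagonalization coefficients of matrices of norm at most one.

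For the crude bounds, semiorthogonality of $U_{k+1}$, $V_k$, $\widehat{U}_k$ gives $\xi(\cdot)=O(\sqrt{\epsilon})$, so the bound $\|W_k\|\le\sqrt{1+\xi(W_k)}$ and the companion estimate $\sigma_{\min}(W_k)\ge\sqrt{1-\xi(W_k)}$ place all the singular values of these three bases within $O(\sqrt{\epsilon})$ of $1$. Solving \eqref{3.8}, \eqref{3.9} and \eqref{3.10} for the coefficient matrices and taking norms then yields $\|B_k+C_k\|=O(1)$, $\|B_k^T+D_k\|=O(1)$ and $\|\widehat{B}_k+\widehat{C}_k\|=O(1)$; subtracting off the $O(1)$ bidiagonal parts gives the preliminary estimates $\|C_k\|,\|D_k\|,\|\widehat{C}_k\|=O(1)$, which I will use freely below.

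I would then refine $C_k$ and $\widehat{C}_k$ by testing the recurrences against earlier Lanczos vectors. Taking the inner product of \eqref{3.2} with $u_j$ for $j<i$, the left-hand side is $O(\sqrt{\epsilon})$ by condition (1) of Definition \ref{Def2}; on the right the term $\xi_{ji}$ is isolated (its coefficient $u_j^Tu_j=1$), while each cross term $u_l^Tu_j$ with $l\ne j$ is $O(\omega_0)$ and, combined with the crude bound on $\xi_{li}$, contributes only $O(\sqrt{\epsilon})$. The one delicate piece is the source term $(\tilde v_i(1:m))^Tu_j$: writing $\tilde v_i(1:m)=Q_Av_i+O(\epsilon)$, it equals $v_i^T(Q_A^Tu_j)+O(\epsilon)$, and substituting the $j$-th column of \eqref{3.9} with $V_k^Tv_i=e_i+O(\omega_0)$ reduces it to $(B_k)_{ji}+(D_k)_{ij}+O(\sqrt{\epsilon})$, which vanishes to leading order because $B_k$ is lower bidiagonal and $D_k$ is strictly upper triangular, so both entries are zero for $j<i$. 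Hence $\xi_{ji}=O(\sqrt{\epsilon})$. The argument for $\widehat{C}_k$ has the same shape: the inner product of \eqref{3.4} with $\hat u_j$ produces the source term $\hat v_{i+1}^T(Q_L^T\hat u_j)+O(\epsilon)$, and now Lemma \ref{Lem3.1} places $Q_L^T\hat u_j$ in $\mathrm{span}\{\hat v_1,\dots,\hat v_{j+1}\}$ up to $O(\epsilon)$; since $j+1\le i$, every $\hat v_{i+1}^T\hat v_r$ occurring is $O(\omega_0)$, so the source term is again $O(\sqrt{\epsilon})$ and $\hat\xi_{j,i+1}=O(\sqrt{\epsilon})$.

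Finally I bound $D_k$, and here lies the main obstacle: the direct inner-product approach fails, because testing \eqref{3.3} against $\tilde v_j$ makes $\eta_{j,i+1}$ cancel exactly against its source term, so the reorthogonalization is designed to kill precisely the inner product that would otherwise expose the size of $\eta_{j,i+1}$. Instead I would compute $V_k^TQ_A^TU_{k+1}$ in two ways, once by transposing \eqref{3.8} and once from \eqref{3.9}, and equate. After cancelling the common $B_k^T$ and using $U_{k+1}^TU_{k+1}=I+O(\sqrt{\epsilon})$, $V_k^TV_k=I+O(\sqrt{\epsilon})$, $V_k^Tv_{k+1}=O(\sqrt{\epsilon})$, and the now-established $\|C_k\|=O(\sqrt{\epsilon})$, this gives a relation $D_k=C_k^T+R_k$ with $\|R_k\|\le O(\sqrt{\epsilon})(1+\|D_k\|)$; for $\epsilon$ small enough the term $O(\sqrt{\epsilon})\|D_k\|$ is absorbed on the left, yielding $\|D_k\|=O(\sqrt{\epsilon})$. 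Thus $D_k$ cannot be controlled in isolation and must be tied back to $C_k$ through this coupling, which is exactly why the crude $O(1)$ bounds of the second paragraph are needed first to remove the circularity between the two.
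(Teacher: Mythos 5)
Your proposal is correct, but it takes a genuinely different route from the paper's for the hardest part of the lemma. For $C_k$ and $D_k$ the paper does not argue directly at all: it invokes the equivalence between the $k$-step Lanczos bidiagonalization of $Q_A$ and the $(2k+1)$-step symmetric Lanczos process and then cites Simon's Lemma~4 on the symmetric Lanczos process with semiorthogonalization, so the entire burden is carried by an external result. You instead give a self-contained argument: crude $O(1)$ bounds obtained by solving \eqref{3.8}--\eqref{3.10} against the nearly orthonormal bases, refinement of $C_k$ by testing \eqref{3.2} against $u_j$ (where the source term collapses because $(B_k)_{ji}$ and $(D_k)_{ij}$ both vanish for $j<i$), and then the coupling identity $D_k=C_k^T+R_k$ obtained by computing $V_k^TQ_A^TU_{k+1}$ once from \eqref{3.8} and once from \eqref{3.9}. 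Your diagnosis of why the naive inner-product test fails for $D_k$ --- the source term $v_j^TQ_A^Tu_{i+1}$ reproduces exactly the entry $\eta_{j,i+1}$, which then cancels against the reorthogonalization sum --- is precisely the structural reason that the paper (following Simon and Larsen) has to fall back on coupled recurrences; it is a correct and valuable observation. For $\widehat{C}_k$ your argument is essentially the paper's: both test the $\hat{u}$-recurrence against earlier $\hat{u}_l$ and use Lemma \ref{Lem3.1} to control the source term; the only difference is that the paper avoids any a priori bound on the $\hat{\xi}$'s through an absorption argument on $M=\max_j|\hat{\xi}_{ji}|$, whereas you lean on your crude $O(1)$ bounds. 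What your approach buys is transparency and independence from the symmetric-Lanczos literature; what the paper's buys is brevity.

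Two points you should tighten. First, your standing assertion that the entries of $B_k$ and $\widehat{B}_k$ are $O(1)$ cannot be borrowed from Theorem \ref{Th3.1}, whose proof uses this very lemma; but your own crude-bound stage supplies it, since $B_k$ and $C_k$ (respectively $B_k^T$ and $D_k$, $\widehat{B}_k$ and $\widehat{C}_k$) have disjoint supports inside their sums, so $\|B_k+C_k\|=O(1)$ already bounds the bidiagonal entries --- this should be said explicitly to avoid an apparent circularity. Second, when you expand $Q_L^T\hat{u}_j$ via Lemma \ref{Lem3.1}, you implicitly need the expansion coefficients to be $O(1)$; this follows from near-orthonormality of the $\hat{v}$'s, and the paper is equally informal here (it simply calls them ``modest constants''), but a sentence acknowledging it would close the gap.
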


Now we give the relation between the two computed quantities $B_{k}$ and $\widehat{B}_{k}$.
\begin{theorem}\label{Th3.1}
	Given the $k$-step JBD process with the semiorthogonalization strategy, we have
	\begin{equation}\label{3.13}
	B_{k}^{T}B_{k}+P\widehat{B}_{k}^{T}\widehat{B}_{k}P=I_{k} + H_{k} ,
	\end{equation}
	where $H_{k}$ is a symmetric tridiagonal matrix with bandwidth $1$, and the nonzero elements of $H_{k}$ are of $O(c_{3}(m,n,p)\epsilon)$ with $c_{3}(m,n,p)=c_{1}(m,n)+c_{2}(p,n)+q_{1}(m,n)+q_{3}(p,n)$.
\end{theorem}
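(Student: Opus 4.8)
The plan is to reduce the claimed identity to an entrywise comparison on a tridiagonal band. First I would dispose of the bandwidth assertion, which is purely structural: since $B_k$ is lower bidiagonal and $\widehat{B}_k$ is upper bidiagonal, both $B_k^TB_k$ and $\widehat{B}_k^T\widehat{B}_k$ are symmetric tridiagonal, and conjugation by the diagonal sign matrix $P$ preserves this structure (it only flips the signs of the off-diagonal entries). Hence $B_k^TB_k+P\widehat{B}_k^T\widehat{B}_kP$ is symmetric tridiagonal, so $H_k$ is automatically symmetric tridiagonal with bandwidth $1$, and only its diagonal and first sub/super-diagonal remain to be estimated. A direct computation from the bidiagonal forms gives the diagonal entry $\alpha_i^2+\beta_{i+1}^2+\hat{\alpha}_i^2+\hat{\beta}_{i-1}^2$ (with $\hat{\beta}_0=0$) and the $(i,i+1)$ entry $\alpha_{i+1}\beta_{i+1}-\hat{\alpha}_i\hat{\beta}_i$.

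For the off-diagonal entries I would invoke the coupling recurrence $\hat{\beta}_i=\alpha_{i+1}\beta_{i+1}/\hat{\alpha}_i$ (step~7 of Algorithm~\ref{alg1}). Carried out in finite precision this yields $\hat{\alpha}_i\hat{\beta}_i=\alpha_{i+1}\beta_{i+1}(1+O(\epsilon))$, and since $Q_A,Q_L$ are column orthonormal the quantities $\alpha_{i+1},\beta_{i+1}$ are bounded by $1$; therefore the $(i,i+1)$ entry of $H_k$ equals $\alpha_{i+1}\beta_{i+1}-\hat{\alpha}_i\hat{\beta}_i=O(\epsilon)$.

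The diagonal entries are the crux. I would split each Lanczos vector as $\tilde{v}_i=(a_i^T,b_i^T)^T$ with $a_i=\tilde{v}_i(1:m)$ and $b_i=\tilde{v}_i(m+1:m+p)$; since the computed Lanczos vectors have unit length, $\|a_i\|^2+\|b_i\|^2=1$ exactly. Rearranging the reorthogonalized recurrence \eqref{3.2} gives $a_i=\alpha_i u_i+\beta_{i+1}u_{i+1}+\sum_{j=1}^{i-1}\xi_{ji}u_j+f_i$, and on squaring its norm I would collect terms: the main part contributes $\alpha_i^2+\beta_{i+1}^2$, the adjacent cross term $2\alpha_i\beta_{i+1}u_i^Tu_{i+1}$ is $O(\epsilon)$ by the local orthogonality \eqref{2.11}, and every remaining cross term is a product of two factors each of size $O(\sqrt{\epsilon})$, using the semiorthogonality $|u_i^Tu_j|\le\omega_0=O(\sqrt{\epsilon})$ together with $\xi_{ji}=O(\sqrt{\epsilon})$ from Lemma~\ref{Lem3.2}, plus the $O(\epsilon)$ contribution of $f_i$. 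This gives $\|a_i\|^2=\alpha_i^2+\beta_{i+1}^2+O(\epsilon)$. An identical treatment of \eqref{3.4}, using \eqref{2.12} for the adjacent pair and $\hat{\xi}_{ji+1}=O(\sqrt{\epsilon})$, yields $\|b_i\|^2=\hat{\alpha}_i^2+\hat{\beta}_{i-1}^2+O(\epsilon)$. Adding the two and using $\|a_i\|^2+\|b_i\|^2=1$ shows the diagonal entry of $H_k$ is $O(\epsilon)$; tracking the constants shows they are governed by $c_1,q_1$ (from the $u$-recurrence) and $c_2,q_3$ (from the $\hat{u}$-recurrence), which is exactly $c_3=c_1+c_2+q_1+q_3$, with $q_2$ absent because the $\tilde{v}$-recurrence \eqref{3.3} is never used.

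The main obstacle is precisely this last estimate: a careless expansion of $\|a_i\|^2$ and $\|b_i\|^2$ produces cross terms of order $O(\sqrt{\epsilon})$, which would only give $H_k=O(\sqrt{\epsilon})$ and ruin the theorem. Showing that these terms are genuinely $O(\epsilon)$ requires two distinct mechanisms working together: for the single adjacent pair $(u_i,u_{i+1})$ (resp.\ $(\hat{u}_i,\hat{u}_{i+1})$), whose coefficient product is only $O(1)$, one must appeal to local orthogonality \eqref{2.11} (resp.\ \eqref{2.12}); while for every non-adjacent pair one exploits that the reorthogonalization coefficient and the inner product are \emph{separately} $O(\sqrt{\epsilon})$, so their product is $O(\epsilon)$. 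I expect the careful bookkeeping of these two regimes, and the verification that no $O(\sqrt{\epsilon})$ term survives, to be the delicate part of the argument.
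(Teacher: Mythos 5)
Your proposal is correct and takes essentially the same route as the paper's proof: the structural tridiagonal observation, the expansion of $\|\tilde{v}_{i}(1:m)\|^{2}$ and $\|\tilde{v}_{i}(m+1:m+p)\|^{2}$ via the reorthogonalized recurrences \eqref{3.2} and \eqref{3.4} (local orthogonality for the adjacent pair, products of two $O(\sqrt{\epsilon})$ factors for non-adjacent pairs and the $\xi_{ji}$), summing with $\|\tilde{v}_{i}\|=1$, and the off-diagonal via the finite-precision relation $\hat{\beta}_{i}=(\alpha_{i+1}\beta_{i+1}/\hat{\alpha}_{i})(1+\tau)$, $|\tau|\leq\epsilon$. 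The only cosmetic difference is that the paper obtains the required bound $\alpha_{i+1}\beta_{i+1}\leq 1+O(c_{3}(m,n,p)\epsilon)$ from the already-established diagonal estimate \eqref{3.14}, whereas you appeal to column orthonormality of $Q_{A}$, which in finite precision is slightly loose but easily repaired by the same diagonal estimate.
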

\begin{proof}
	Since
	\begin{equation*}
	B_{k}^{T}B_{k}=\begin{pmatrix}
	\alpha_{1}^{2}+\beta_{2}^{2} &\alpha_{2}\beta_{2} & & \\
	\alpha_{2}\beta_{2} &\alpha_{2}^{2}+\beta_{3}^{2} &\ddots & \\
	&\ddots &\ddots & \alpha_{k}\beta_{k}\\
	& &\alpha_{k}\beta_{k} &\alpha_{k}^{2}+\beta_{k+1}^{2}
	\end{pmatrix},
	\end{equation*}
	\begin{equation*}
	\widehat{B}_{k}^{T}\widehat{B}_{k}=\begin{pmatrix}
	\hat{\alpha}_{1}^{2}&\hat{\alpha}_{1}\hat{\beta}_{1} & & \\
	\hat{\alpha}_{1}\hat{\beta}_{1}&\hat{\alpha}_{2}^{2}+\hat{\beta}_{1}^{2} &\ddots & \\
	&\ddots &\ddots &\hat{\alpha}_{k-1}\hat{\beta}_{k-1} \\
	& &\hat{\alpha}_{k-1}\hat{\beta}_{k-1} &\hat{\alpha}_{k}^{2}+\hat{\beta}_{k-1}^{2}
	\end{pmatrix},
	\end{equation*}
	nonzero elements in $H_{k}$ are contained only in
	the diagonal and subdiagonal parts.
	
	For the diagonal part, from \eqref{3.2} we have
	\begin{align*}
	\|\tilde{v}_{i}(1:m) \|^{2} 
	& = \| \alpha_{i}u_{i} + \beta_{i+1}u_{i+1}
	+ \sum\limits_{j=1}^{i-1}\xi_{ji}u_{j}+f_{i}\|^{2} \\
	& = \alpha_{i}^{2}+\beta_{i+1}^{2}+2\alpha_{i}\beta_{i+1}u_{i}^{T}u_{i+1} +
	2\alpha_{i}u_{i}^{T}f_{i}+2\beta_{i+1}u_{i+1}^{T}f_{i}+\|f_{i}\|^{2}+ \\
	& \ \  \ \ \| \sum\limits_{j=1}^{i-1}\xi_{ji}u_{j}\|^{2}
	+ 2\alpha_{i}\sum\limits_{j=1}^{i-1}\xi_{ji}u_{i}^{T}u_{j} +
	2\beta_{i+1}\sum\limits_{j=1}^{i-1}\xi_{ji}u_{i+1}^{T}u_{j} + 2\sum\limits_{j=1}^{i-1}\xi_{ji}f_{i}^{T}u_{j} .
	\end{align*}
	Since $\xi_{ji} = O(\sqrt{\epsilon})$ and $u_{l}^{T}u_{j}\leq\sqrt{\epsilon/(2k+1)}$ for $1\leq l \neq j \leq i+1$, we obtain 
	\begin{align*}
	& \ \ \ \ \| \sum\limits_{j=1}^{i-1}\xi_{ji}u_{j} \|^{2} +
	2\alpha_{i}\sum\limits_{j=1}^{i-1}\xi_{ji}u_{i}^{T}u_{j} + 2\beta_{i+1}\sum\limits_{j=1}^{i-1}\xi_{ji}u_{i+1}^{T}u_{j} + 2\sum\limits_{j=1}^{i-1}\xi_{ji}f_{i}^{T}u_{j} \\
	&= 2\sum\limits_{1\leq j<l \leq i-1}\xi_{ji}\xi_{li}u_{j}^{T}u_{l}+
	\sum\limits_{j=1}^{i-1}\xi_{ji}^{2}\| u_{j}\|^{2}+
	2\alpha_{i}\sum\limits_{j=1}^{i-1}O(\epsilon) +
	2\beta_{i+1}\sum\limits_{j=1}^{i-1}O(\epsilon)+
	2\sum\limits_{j=1}^{i-1}O(\epsilon\sqrt{\epsilon}) \\
	&= O(i\epsilon\sqrt{\epsilon}) + O(i\epsilon) + O[i(\alpha_{i}+\beta_{i+1})\epsilon] + O(i\epsilon\sqrt{\epsilon}) \\
	&=	O(i\epsilon) .
	\end{align*}
	Using the property of local orthogonality of $u_{i}$ , we have
	$$2\alpha_{i}\beta_{i+1}u_{i}^{T}u_{i+1} +
	2\alpha_{i}u_{i}^{T}f_{i}+2\beta_{i+1}u_{i+1}^{T}f_{i}+\|f_{i}\|^{2} = O(\bar{c}_{1}(m,n)\epsilon)$$
	with $\bar{c}_{1}(m,n)=c_{1}(m,n)+q_{1}(m,n)$.
	Thus
	\begin{equation*}
	\|\tilde{v}_{i}(1:m) \|^{2} = \alpha_{i}^{2}+\beta_{i+1}^{2} + O(\bar{c}_{1}(m,n)\epsilon) .
	\end{equation*}
	Using the similar method as above, from \eqref{3.4} we can obtain
	\begin{equation*}
	\|\tilde{v}_{i}(m+1:m+p)\|^{2} = \hat{\alpha}_{i}^{2} + \hat{\beta}_{i-1}^{2} 
	+  O(\bar{c}_{2}(p,n)\epsilon)
	\end{equation*}
	with $\bar{c}_{2}(p,n)=c_{2}(p,n)+q_{3}(p,n)$. Since
	$$1 = \|\tilde{v}_{i}\|^{2} = \|\tilde{v}_{i}(1:m) \|^{2}+\|\tilde{v}_{i}(m+1:m+p)\|^{2},$$
	we get 
	\begin{equation}\label{3.14}
	\alpha_{i}^{2}+\beta_{i+1}^{2}+\hat{\alpha}_{i}^{2}+\hat{\beta}_{i-1}^{2}
	= 1 + O(c_{3}(m,n,p) \epsilon) .
	\end{equation}
	
	For subdiagonal part, in finite precision arithmetic, we have  $\hat{\beta}_{i}=(\alpha_{i+1}\beta_{i+1}/\hat{\alpha}_{i})
	(1+\tau)$, where $|\tau|\leq \epsilon$
	\cite[\S 2.2]{Higham2002}, and thus
	$$\alpha_{i+1}\beta_{i+1} =\hat{\alpha}_{i}\hat{\beta}_{i}-\alpha_{i+1}\beta_{i+1}\tau .$$
	From \eqref{3.14} we have
	$$\alpha_{i+1}\beta_{i+1} \leq \frac{\alpha_{i+1}^{2}+\beta_{i+1}^2}{2}
	\leq \frac{2[1+O(c_{3}(m,n,p)\epsilon)]}{2}
	=1+O(c_{3}(m,n,p)\epsilon).$$
	Therefore, we obtain
	\begin{equation}\label{3.15}
	\alpha_{i+1}\beta_{i+1}=\hat{\alpha}_{i}\hat{\beta}_{i} +\gamma_{i}  ,
	\end{equation}
	where $|\gamma_{i}| \leq [1+O(c_{3}(m,n,p)\epsilon)]\epsilon = O(\epsilon)$.
	
	Combining \eqref{3.14} and \eqref{3.15}, we finally obtain \eqref{3.13}.
\end{proof}

We now show the connection between the process of computing $\widehat{U}_{k}$, $\widehat{V}_{k}$, $\widehat{B}_{k}$ and the upper Lanczos bidiagonalization of $Q_{L}$ in finite precision arithmetic.
\begin{theorem}\label{Th3.2}
	For the $k$-step JBD process with the semiorthogonalization strategy, the following relation holds:
	\begin{equation}\label{3.16}
	Q_{L}^{T}\widehat{U}_{k}=\widehat{V}_{k}(\widehat{B}_{k}^{T}+\widehat{D}_{k}) +\hat{\beta}_{k}\hat{v}_{k+1}e_{k}^{T}+\widehat{G}_{k} ,
	\end{equation}
	where $\widehat{D}_{k}$ is upper triangular with zero diagonals, and
	\begin{equation}\label{3.17}
	\| \widehat{G}_{k}\|
	= O(c_{4}(m,n,p)\|\widehat{B}_{k}^{-1}\|\epsilon) ,
	\end{equation}
	with $c_{4}(m,n,p)=c_{1}(m,n)+c_{2}(p,n)+\bar{q}({m,n,p})$.
\end{theorem}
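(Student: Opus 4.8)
The plan is to obtain \eqref{3.16} algebraically from the three computed relations \eqref{3.8}--\eqref{3.10}, Theorem~\ref{Th3.1}, and the identity $Q_L^TQ_L=I_n-Q_A^TQ_A$, which follows from $Q^TQ=I_n$ for the orthonormal factor $Q$ in \eqref{1.1}. Since no $\hat\alpha_i$ is negligible by \eqref{2.13} and $\widehat C_k=O(\sqrt\epsilon)$ by Lemma~\ref{Lem3.2}, the matrix $\widehat B_k+\widehat C_k$ is \emph{upper triangular} and invertible with $\|(\widehat B_k+\widehat C_k)^{-1}\|=O(\|\widehat B_k^{-1}\|)$. I would solve \eqref{3.10} for $\widehat U_k=(Q_L\widehat V_k-\widehat F_k)(\widehat B_k+\widehat C_k)^{-1}$ and left-multiply by $Q_L^T$ to get
\[
Q_L^T\widehat U_k=(I_n-Q_A^TQ_A)\widehat V_k(\widehat B_k+\widehat C_k)^{-1}-Q_L^T\widehat F_k(\widehat B_k+\widehat C_k)^{-1},
\]
whose last summand has norm $O(q_3\|\widehat B_k^{-1}\|\epsilon)$ and is placed into $\widehat G_k$. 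This is where the factor $\|\widehat B_k^{-1}\|$ enters, through right-multiplication of an $O(\epsilon)$ error by $(\widehat B_k+\widehat C_k)^{-1}$.

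Next I would expand $Q_A^TQ_A\widehat V_k$. Using $\widehat V_k=V_kP$, relations \eqref{3.8}--\eqref{3.9}, the facts $e_{k+1}^T(B_k+C_k)=\beta_{k+1}e_k^T$ (the last row of $C_k$ vanishes) and $v_{k+1}e_k^TP=-\hat v_{k+1}e_k^T$, I obtain $Q_A^TQ_A\widehat V_k=\widehat V_kP(B_k^T+D_k)(B_k+C_k)P-\alpha_{k+1}\beta_{k+1}\hat v_{k+1}e_k^T+E_kP$ with $E_k=G_{k+1}(B_k+C_k)+Q_A^TF_k=O(\epsilon)$. Invoking Theorem~\ref{Th3.1} in the form $PB_k^TB_kP=I_k-\widehat B_k^T\widehat B_k+PH_kP$ turns the resulting bracket into
\[
I_k-P(B_k^T+D_k)(B_k+C_k)P=\widehat B_k^T\widehat B_k-PH_kP-P\bigl(B_k^TC_k+D_kB_k+D_kC_k\bigr)P.
\]
Right-multiplying by $(\widehat B_k+\widehat C_k)^{-1}$ and using $\widehat B_k^T\widehat B_k(\widehat B_k+\widehat C_k)^{-1}=\widehat B_k^T-\widehat B_k^T\widehat C_k(\widehat B_k+\widehat C_k)^{-1}$ produces the announced leading term $\widehat V_k\widehat B_k^T$. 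Because $e_k^T\widehat C_k=0$ one has $e_k^T(\widehat B_k+\widehat C_k)^{-1}=\hat\alpha_k^{-1}e_k^T$ exactly, so with $\alpha_{k+1}\beta_{k+1}=\hat\alpha_k\hat\beta_k+O(\epsilon)$ from \eqref{3.15} the boundary term collapses to $\hat\beta_k\hat v_{k+1}e_k^T+O(\|\widehat B_k^{-1}\|\epsilon)$.

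The decisive step, and the one I expect to be the main obstacle, is controlling the reorthogonalization corrections. A naive estimate would multiply the $O(\sqrt\epsilon)$ matrices $C_k,D_k,\widehat C_k$ by $(\widehat B_k+\widehat C_k)^{-1}$ and yield an unacceptable $O(\sqrt\epsilon\,\|\widehat B_k^{-1}\|)$ contribution. The resolution is structural: exploiting the bidiagonal sparsity of $B_k,\widehat B_k$ and the strictly-upper-triangular sparsity of $C_k,D_k,\widehat C_k$, each of $B_k^TC_k$, $D_kB_k$, $D_kC_k$ and $\widehat B_k^T\widehat C_k$ is itself strictly upper triangular; conjugation by the signature matrix $P$ preserves this pattern, and so does right-multiplication by the upper-triangular $(\widehat B_k+\widehat C_k)^{-1}$. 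Hence all these $O(\sqrt\epsilon)$ corrections assemble into a single strictly upper triangular matrix, which I would \emph{define} to be $\widehat D_k$; it has zero diagonal and, crucially, never enters $\widehat G_k$. Verifying these sparsity-pattern computations is the real technical content.

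Finally I would collect $\widehat G_k$, which contains only quantities that are genuinely $O(\epsilon)$ before inversion: the term $-Q_L^T\widehat F_k(\widehat B_k+\widehat C_k)^{-1}$; the tridiagonal contribution $-\widehat V_kPH_kP(\widehat B_k+\widehat C_k)^{-1}$ (with $H_k=O(c_3\epsilon)$ by Theorem~\ref{Th3.1}, tridiagonal and therefore \emph{not} absorbable into the upper-triangular $\widehat D_k$); the term $-E_kP(\widehat B_k+\widehat C_k)^{-1}$; the $O(\|\widehat B_k^{-1}\|\epsilon)$ slack in the boundary coefficient; and the $O(\epsilon)$ discrepancies from $\widetilde V_k=QV_k+O(\epsilon)$ and $\tilde v_i\approx QQ^T\tilde v_i$, legitimate under $\|\underline B_k^{-1}\|=O(1)$ by Remark~\ref{rem2.1}. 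Multiplying each $O(\epsilon)$ factor by $\|(\widehat B_k+\widehat C_k)^{-1}\|=O(\|\widehat B_k^{-1}\|)$ and gathering the constants $c_1,c_2$ (from local orthogonality, through $H_k$) and $\bar q=q_1+q_2+q_3$ (from $F_k,G_{k+1},\widehat F_k$) yields $\|\widehat G_k\|=O(c_4(m,n,p)\|\widehat B_k^{-1}\|\epsilon)$ with $c_4=c_1+c_2+\bar q$, which is exactly \eqref{3.17}.
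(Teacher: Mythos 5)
Your proposal is sound in its overall architecture and reaches the right decomposition, but it takes a genuinely different route from the paper at the decisive step, and that route carries one unjustified claim. The paper never inverts $\widehat{B}_{k}+\widehat{C}_{k}$. Instead, it adds $Q_{A}^{T}Q_{A}V_{k}$ (expanded via \eqref{3.8}--\eqref{3.9}) to $Q_{L}^{T}Q_{L}V_{k}$ (expanded via \eqref{3.10}), uses $(Q_{A}^{T}Q_{A}+Q_{L}^{T}Q_{L})V_{k}=V_{k}$ and Theorem~\ref{Th3.1}, and arrives at $\widehat{V}_{k}\widehat{B}_{k}^{T}\widehat{B}_{k}=Q_{L}^{T}\widehat{U}_{k}\widehat{B}_{k}-\hat{\alpha}_{k}\hat{\beta}_{k}\hat{v}_{k+1}e_{k}^{T}+\bar{E}_{1}+\bar{E}_{2}$, in which the troublesome reorthogonalization contribution appears as $Q_{L}^{T}\widehat{U}_{k}\widehat{C}_{k}$. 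That term is handled by Lemma~\ref{Lem3.1}: each of its columns lies in the span of earlier $\hat{v}_{j}$ up to $O(\bar{q}(m,n,p)\epsilon)$, so $Q_{L}^{T}\widehat{U}_{k}\widehat{C}_{k}=\widehat{V}_{k}W_{k}+O(\bar{q}(m,n,p)\epsilon)$ with $W_{k}$ strictly upper triangular; together with $Y_{k}=P[D_{k}B_{k}+(B_{k}^{T}+D_{k})C_{k}]P$ this gives $\widehat{D}_{k}=-(W_{k}+Y_{k})\widehat{B}_{k}^{-1}$ and $\widehat{G}_{k}=-[\bar{E}_{2}+O(\bar{q}(m,n,p)\epsilon)]\widehat{B}_{k}^{-1}$, so only $\widehat{B}_{k}^{-1}$ is ever applied, with no restriction on its size. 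You, by contrast, eliminate $\widehat{U}_{k}$ from \eqref{3.10} at the outset and never need Lemma~\ref{Lem3.1}: the analogous term becomes $\widehat{B}_{k}^{T}\widehat{C}_{k}(\widehat{B}_{k}+\widehat{C}_{k})^{-1}$, absorbed into $\widehat{D}_{k}$ by sparsity. Your sparsity claims are all correct (each of $B_{k}^{T}C_{k}$, $D_{k}B_{k}$, $D_{k}C_{k}$, $\widehat{B}_{k}^{T}\widehat{C}_{k}$ is strictly upper triangular, and conjugation by $P$ and right multiplication by an upper triangular inverse preserve this), your boundary computation $e_{k}^{T}(\widehat{B}_{k}+\widehat{C}_{k})^{-1}=\hat{\alpha}_{k}^{-1}e_{k}^{T}$ is exact, and your bookkeeping of the constants into $c_{4}=c_{1}+c_{2}+\bar{q}$ is consistent with the paper's.

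The caveat: your argument rests on $\|(\widehat{B}_{k}+\widehat{C}_{k})^{-1}\|=O(\|\widehat{B}_{k}^{-1}\|)$, which you assert but do not prove, and which is not free. Writing $\widehat{B}_{k}+\widehat{C}_{k}=\widehat{B}_{k}(I_{k}+\widehat{B}_{k}^{-1}\widehat{C}_{k})$, this bound requires $\|\widehat{B}_{k}^{-1}\widehat{C}_{k}\|$ to be safely below $1$; since $\|\widehat{C}_{k}\|$ can be of order $k\sqrt{\epsilon}$, that is an implicit hypothesis of the type $\|\widehat{B}_{k}^{-1}\|=O(1/(k\sqrt{\epsilon}))$. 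The theorem as stated carries no such restriction (its conclusion merely becomes vacuous when $\|\widehat{B}_{k}^{-1}\|$ is enormous), and the paper's proof genuinely does not need one. In the regime where the theorem is later applied (cf.\ \eqref{4.8}) your restriction is amply satisfied, so your proof covers every case of practical interest; but to prove the theorem exactly as stated you should either state this hypothesis explicitly, or replace that single step by the paper's device: keep $Q_{L}^{T}\widehat{U}_{k}\widehat{C}_{k}$ intact and invoke Lemma~\ref{Lem3.1} to rewrite it as $\widehat{V}_{k}W_{k}+O(\bar{q}(m,n,p)\epsilon)$, so that only $\widehat{B}_{k}$ itself is ever inverted.
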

\begin{proof}
	Combining \eqref{3.8} and \eqref{3.9}, we have
	\begin{align*}
	Q_{A}^{T}Q_{A}V_{k} &=Q_{A}^{T}U_{k+1}(B_{k}+C_{k})+Q_{A}^{T}F_{k} \\
	&= [V_{k}(B_{k}^{T}+D_{k})+\alpha_{k+1}v_{k+1}e_{k+1}^{T}+G_{k+1}](B_{k}+
	C_{k})+Q_{A}^{T}F_{k} \\
	&= V_{k}B_{k}^{T}B_{k}+\alpha_{k+1}\beta_{k+1}v_{k+1}e_{k}^{T}+
	V_{k}(B_{k}^{T}+D_{k})C_{k}+V_{k}D_{k}B_{k}+ \\
	& \ \ \ \ \ \  G_{k+1}(B_{k}+C_{k})+ Q_{A}^{T}F_{k} .
	\end{align*}
	Premultiply \eqref{3.10} by $Q_{L}^{T}$, we have
	\begin{align*}
	Q_{L}^{T}Q_{L}V_{k}=[Q_{L}^{T}\widehat{U}_{k}(\widehat{B}_{k}+\widehat{C}_{k})+
	Q_{L}^{T}\widehat{F}_{k}]P .
	\end{align*}
	Adding the above two equalities, we obtain
	\begin{align*}
	& \ \ \ \ (Q_{A}^{T}Q_{A}+Q_{L}^{T}Q_{L})V_{k} \\
	&=  V_{k}[I_{k}-P\widehat{B}_{k}^{T}\widehat{B}_{k}P+H_{k}]
	+Q_{L}\widehat{U}_{k}\widehat{B}_{k}^{T}P+(\hat{\alpha}_{k}\hat{\beta}_{k}+\gamma_{k})v_{k+1}e_{k}^{T} + V_{k}D_{k}B_{k}+ \\
	& \ \ \ \ V_{k}(B_{k}^{T}+D_{k})C_{k} + Q_{L}^{T}\widehat{U}_{k}\widehat{C}_{k}P +
	G_{k+1}(B_{k}+C_{k})+ Q_{A}^{T}F_{k}+Q_{L}^{T}\widehat{F}_{k}P .
	\end{align*}
	Since $(Q_{A}^{T}Q_{A}+Q_{L}^{T}Q_{L})V_{k}=V_{k}$, after some rearrangement we obtain 
	\begin{align*}
	\widehat{V}_{k}\widehat{B}_{k}^{T}\widehat{B}_{k}
	&= Q_{L}^{T}\widehat{U}_{k}\widehat{B}_{k}-\hat{\alpha}_{k}\hat{\beta}_{k}\hat{v}_{k+1}e_{k}^{T}
	+\bar{E}_{1} + \bar{E}_{2} ,
	\end{align*}
	where
	\begin{align*}
	\bar{E}_{1}= \widehat{V}_{k}P[D_{k}B_{k}+(B_{k}^{T}+D_{k})C_{k}]P+ Q_{L}^{T}\widehat{U}_{k}\widehat{C}_{k} ,
	\end{align*}
	and
	\begin{align*}
	\bar{E}_{2}
	&= [G_{k+1}(B_{k}+C_{k})+Q_{A}^{T}F_{k} + Q_{L}^{T}\widehat{F}_{k}P +V_{k}H_{k}]P - \gamma_{k}\hat{v}_{k+1}e_{k}^{T} .
	\end{align*}
	
	According to the structure of matrices $C_{k}$ and $D_{k}$, with simple calculation we can verify that $P[D_{k}B_{k}+(B_{k}^{T}+D_{k})C_{k}]P$ is an upper triangular matrix with zero diagonals, which is denoted by $Y_{k}$. Noticing that the $i$-th column of $Q_{L}^{T}\widehat{U}_{k}\widehat{C}_{k}$ is
	$\sum_{j=1}^{i-2}\hat{\xi}_{ji}Q_{L}^{T}\hat{u}_{j}$, by Lemma \ref{Lem3.1}, there exit
	coefficients $\rho_{1i}, \dots, \rho_{i-1,i}$ such that 
	$$ \sum_{j=1}^{i-2}\hat{\xi}_{ji}Q_{L}^{T}\hat{u}_{j} =  \sum_{j=1}^{i-1}\rho_{ji}\hat{v}_{j}+O(\bar{q}({m,n,p})\epsilon) .$$
	Therefore, we have
	$$ Q_{L}^{T}\widehat{U}_{k}C_{k}=\widehat{V}_{k}W_{k}+O(\bar{q}({m,n,p})\epsilon) ,$$
	where 
	$$ W_{k} = \begin{pmatrix}
	0 & \rho_{12} & \rho_{13} & \cdots & \rho_{1k} \\
	& 0  & \rho_{23} & \cdots & \rho_{2k} \\
	&    & \ddots    & \ddots & \vdots \\
	&    &     & \ddots & \rho_{k-1,k} \\
	&    &     &  & 0 
	\end{pmatrix} \in \mathbb{R}^{k\times k} $$
	is upper triangular with zero diagonals. 
	
	Notice that $\|V_{k}\| \leq \sqrt{1+\xi(V_{k})}=1+O(\sqrt{\epsilon})$. From \eqref{3.14}, we can get
	$$ \|B_{k}\| \leq \sqrt{2}\max_{1\leq i \leq k}(\alpha_{i}^{2}+\beta_{i+1}^{2})^{1/2}
	\leq \sqrt{2}+O(c_{3}(m,n,p)\epsilon) .\footnote{Here we use the result of an exercise from \cite[Chapter6, Problems 6.14]{Higham2002}, which gives the upper bound of the $p$-norm of a row/column sparse matrix.}$$ Similar to $\|B_{k}\|$, by Theorem \ref{Th3.1}, we can get 
	\begin{equation}\label{3.18}
	\|H_{k}\|=O(c_{3}(m,n,p)).
	\end{equation}
	Using these upper bounds, with simple but tedious calculation, we can prove that 
	$$\|\bar{E}_{2}\| = O(c_{4}(m,n,p)\epsilon) .$$
	
	From the above, we obtain
	\begin{align*}
	Q_{L}^{T}\widehat{U}_{k}-\widehat{V}_{k}\widehat{B}_{k}^{T} -\hat{\beta}_{k}\hat{v}_{k+1}e_{k}^{T} 
	= -\widehat{V}_{k}(W_{k}+Y_{k})\widehat{B}_{k}^{-1} - [\bar{E}_{2}+ O(\bar{q}({m,n,p})\epsilon)]\widehat{B}_{k}^{-1}  .
	\end{align*}
	Noticing that $-(W_{k}+Y_{k})\widehat{B}_{k}^{-1}$ is upper triangular with zero diagonals, which is denoted by $\widehat{D}_{k}$, we finally obtain
	\begin{align*}
	Q_{L}^{T}\widehat{U}_{k}=\widehat{V}_{k}(\widehat{B}_{k}^{T}+\widehat{D}_{k}) +\hat{\beta}_{k}\hat{v}_{k+1}e_{k}^{T}+\widehat{G}_{k} ,
	\end{align*}
	where $\widehat{G}_{k}=-[\bar{E}_{2}+ O(\bar{q}({m,n,p})\epsilon)]\widehat{B}_{k}^{-1}$ and $\| \widehat{G}_{k}\|= O(c_{4}(m,n,p)\|\widehat{B}_{k}^{-1}\|\epsilon)$.
\end{proof}

If we write the matrix $\widehat{D}_{k}$ as 
$$\widehat{D}_{k} = \begin{pmatrix}
0  & \hat{\eta}_{12} &\hat{\eta}_{13} &\cdots &\hat{\eta}_{1k} \\
&0 & \hat{\eta}_{23} &\cdots &\hat{\eta}_{2k} \\
& & \ddots &\ddots &\vdots \\
& & & 0 & \hat{\eta}_{k-1,k} \\
& & & & 0
\end{pmatrix} \in \mathbb{R}^{k\times k} ,$$
then for each $i = 1, \dots, k$, from \eqref{3.16} we have
$$\hat{\beta}_{i}\hat{v}_{i+1} = Q_{L}^{T}\hat{u}_{i}-\hat{\alpha}_{i}\hat{v}_{i}-
\sum_{j=1}^{i-1}\hat{\eta}_{ji}\hat{v}_{j} - \hat{g}_{i} ,$$
where $\|\hat{g}_{i}\|= O(c_{4}(m,n,p)\|\widehat{B}_{k}^{-1}\|\epsilon)$, which corresponds to the reorthogonalization of $\hat{v}_{i}$ with error term $\hat{g}_{i}$. Therefore, combining \eqref{3.10} and \eqref{3.16}, we can treat the process of computing $\widehat{U}_{k}$, $\widehat{V}_{k}$ and $\widehat{B}_{k}$ as the upper Lanczos bidiagonalization of $Q_{L}$ with the semiorthogonalization strategy within error $\delta= O(c_{4}(m,n,p)\|\widehat{B}_{k}^{-1}\|\epsilon)$.

%%%%%%%%%%%% GSVD computation, projection relations %%%%%%%%%%%%
By \eqref{3.8} and \eqref{3.9}, we can treat the process of computing $U_{k+1}$, $V_{k}$ and $B_{k}$ as the lower Lanczos bidiagonalization of $Q_{A}$ with the semiorthogonalization strategy. Therefore, the computed $B_{k}$ is, up to roundoff, the Ritz-Galerkin projection of $Q_{A}$ on the subspace $span(U_{k+1})$ and $span(V_{k})$, i.e., we have the following result.
\begin{theorem}\label{Th3.3}
For the $k$-step JBD process with the semiorthogonalization strategy, suppose that the compact $QR$ factorizations of $U_{k}$ and $V_{k}$ are $U_{k}=M_{k}R_{k}$ and $V_{k}=N_{k}S_{k}$, where the diagonals of the upper triangular matrices $R_{k}$ and $S_{k}$ are nonnegative. Then
\begin{equation}\label{3.19}
M_{k}^{T}Q_{A}N_{k}=B_{k}+E_{k} ,
\end{equation}
where the elements of $E_{k}$ are of $O(\tilde{q}(m,n,p)\epsilon)$ with $\tilde{q}(m,n,p)=q_{1}(m,n)+q_{2}(p,n)$.	
\end{theorem}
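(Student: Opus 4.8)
The plan is to reduce the claim to an entrywise $O(\epsilon)$ bound on a single correction matrix, and then to extract that bound by playing two complementary triangular structures against each other. I write the compact $QR$ factorizations as $U_{k+1}=M_{k+1}R_{k+1}$ and $V_k=N_kS_k$ with $M_{k+1},N_k$ column orthonormal (to match the shape of $B_k\in\mathbb{R}^{(k+1)\times k}$, the relevant left basis is that of $\mathrm{span}(U_{k+1})$). First I would use $M_{k+1}^TU_{k+1}=R_{k+1}$ and $N_k=V_kS_k^{-1}$ in \eqref{3.8} to get
\[
M_{k+1}^TQ_AN_k=M_{k+1}^TQ_AV_kS_k^{-1}=R_{k+1}(B_k+C_k)S_k^{-1}+M_{k+1}^TF_kS_k^{-1},
\]
where the last term is $O(q_1(m,n)\epsilon)$ since $\|F_k\|=O(q_1(m,n)\epsilon)$ and $M_{k+1},S_k^{-1}$ are bounded. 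Writing $R_{k+1}=I+R'$ and $S_k^{-1}=I-S''+O(\epsilon)$ with $R',S''$ upper triangular of size $O(\sqrt\epsilon)$ (semiorthogonality), and discarding all products of two $O(\sqrt\epsilon)$ factors (legitimate since $\|B_k\|=O(1)$ by \eqref{3.14} and $C_k=O(\sqrt\epsilon)$ by Lemma \ref{Lem3.2}), this becomes $B_k+X+O(\tilde{q}(m,n,p)\epsilon)$ with $X:=C_k+R'B_k-B_kS''$. It therefore suffices to show that $X$ is $O(\tilde{q}(m,n,p)\epsilon)$ entrywise.

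Next I would record two representations of $X$. From its definition, $C_k$ is strictly upper triangular and, because $B_k$ is lower bidiagonal while $R',S''$ are upper triangular, both $R'B_k$ and $B_kS''$ have nonzero entries only on and above the first subdiagonal; hence $X$ vanishes strictly below its first subdiagonal. The naive estimate gives only $X=O(\sqrt\epsilon)$, and the heart of the matter is that the $O(\sqrt\epsilon)$ parts must cancel. To expose this I would evaluate $U_{k+1}^TQ_AV_k$ in two ways, directly from \eqref{3.8} and by transposing \eqref{3.9}; equating them and using $U_{k+1}^TU_{k+1}=I-\Phi$, $V_k^TV_k=I-\Psi$ with $\Phi,\Psi=O(\sqrt\epsilon)$, together with $\|U_{k+1}^TF_k\|=O(q_1(m,n)\epsilon)$ and $\|G_{k+1}^TV_k\|=O(q_2(m,p)\epsilon)$, yields the consistency relation
\[
C_k-\Phi B_k=D_k^T-B_k\Psi+\alpha_{k+1}e_{k+1}(V_k^Tv_{k+1})^T+O(\tilde{q}(m,n,p)\epsilon).
\]
Substituting this into $X$ and using the $QR$ identities $\Phi=-(R'+R'^T)+O(\epsilon)$ and $\Psi=-(S''+S''^T)+O(\epsilon)$, the $O(\sqrt\epsilon)$ terms reorganize into
\[
X=-R'^TB_k+B_kS''^T+D_k^T+\alpha_{k+1}e_{k+1}(V_k^Tv_{k+1})^T+O(\tilde{q}(m,n,p)\epsilon),
\]
a matrix supported on and below the diagonal, since $R'^T,S''^T$ are lower triangular, $D_k^T$ is strictly lower triangular, and the rank-one term occupies only the last row.

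Finally I would combine the two supports. The strictly upper entries of $X$ must be $O(\epsilon)$ because the second representation has no support there, and everything strictly below the first subdiagonal is identically zero by the first representation; only the diagonal and the first subdiagonal survive in both. On the diagonal the second representation gives $X_{ii}=\alpha_i(S''_{ii}-R'_{ii})+O(\tilde{q}(m,n,p)\epsilon)$, while on the first subdiagonal the definition gives $X_{i,i-1}=\beta_i(R'_{ii}-S''_{i-1,i-1})$. Both are $O(\epsilon)$ once I observe that the diagonal entries $R'_{ii},S''_{ii}$ are themselves $O(\epsilon)$: the unit-length normalization $\|u_i\|=\|v_i\|=1$ forces $(R_{k+1}^TR_{k+1})_{ii}=(S_k^TS_k)_{ii}=1$, and since the off-diagonal entries of $R_{k+1},S_k$ are $O(\sqrt\epsilon)$ this gives $R_{ii}=1-O(\epsilon)$ and $S_{ii}=1-O(\epsilon)$. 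Collecting the three regions yields $X=O(\tilde{q}(m,n,p)\epsilon)$ entrywise, hence $M_{k+1}^TQ_AN_k=B_k+E_k$ with $E_k=O(\tilde{q}(m,n,p)\epsilon)$. The only genuine obstacle is this passage from the naive $O(\sqrt\epsilon)$ to the required $O(\epsilon)$: it hinges on the cancellation revealed by the second, lower-triangular representation of $X$, supplemented by the elementary fact that unit normalization pins the diagonals of the triangular factors at $1+O(\epsilon)$; note that local orthogonality of the $u_i$ and $\hat u_i$ is not needed here, which is consistent with the constants $c_1,c_2$ being absent from $\tilde{q}$.
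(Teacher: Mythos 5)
Your proposal is correct, but it follows a genuinely different route from the paper. The paper does not argue inside the bidiagonal setting at all: it observes that the $k$-step Lanczos bidiagonalization of $Q_A$ is equivalent to the $(2k+1)$-step symmetric Lanczos process applied to the augmented matrix $\bigl(\begin{smallmatrix} 0 & Q_A \\ Q_A^T & 0 \end{smallmatrix}\bigr)$ \cite[\S 7.6.1]{Bjorck1996}, and then concludes Theorem \ref{Th3.3} directly from the corresponding result for the symmetric Lanczos process with semiorthogonalization, \cite[Theorem 5]{Simon1984b}. Your proof is instead a self-contained computation: expand the QR factors as identity plus $O(\sqrt{\epsilon})$ triangular corrections, reduce the claim to an entrywise bound on $X=C_k+R'B_k-B_kS''$, and force the cancellation of the $O(\sqrt{\epsilon})$ terms by equating the two evaluations of $U_{k+1}^TQ_AV_k$ obtained from \eqref{3.8} and \eqref{3.9}; the structural clash between the two representations (one supported on and above the first subdiagonal, the other on and below the diagonal) leaves only the diagonal and first subdiagonal, which are killed by the elementary observation that unit-length normalization pins the diagonals of $R_{k+1}$ and $S_k$ at $1+O(\epsilon)$. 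I checked the supports and the entrywise formulas $X_{ii}=\alpha_i(S''_{ii}-R'_{ii})+O(\epsilon)$ and $X_{i,i-1}=\beta_i(R'_{ii}-S''_{i-1,i-1})$; they are right, and the rank-one term $\alpha_{k+1}e_{k+1}(V_k^Tv_{k+1})^T$ causes no harm since it sits strictly below the diagonal and the $(k+1,k)$ entry is handled exactly by the first representation. What the paper's reduction buys is brevity and reuse of Simon's analysis; what yours buys is transparency about where the cancellation occurs, an explicit accounting showing that only $q_1,q_2$ (not the local-orthogonality constants $c_1,c_2$) enter $\tilde q$, and a clean treatment of the paper's dimensional slip: as stated, $M_k^TQ_AN_k$ is $k\times k$ while $B_k$ is $(k+1)\times k$, so your choice $U_{k+1}=M_{k+1}R_{k+1}$ is the natural reading, and the stated version follows by truncation to the first $k$ rows, since uniqueness of the compact QR factorization gives that $M_k$ consists of the first $k$ columns of $M_{k+1}$. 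One small debt: the step asserting that $R^TR=I+O(\sqrt{\epsilon})$, with $R$ upper triangular and nonnegative diagonal, forces $R=I+O(\sqrt{\epsilon})$ is a standard Cholesky-factor perturbation bound, but it is used silently and should be stated or cited; likewise your appeal to Lemma \ref{Lem3.2} for $C_k,D_k=O(\sqrt{\epsilon})$ is essential and correctly flagged.
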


Since the $k$-step Lanczos bidiagonalization is equivalent to the $(2k+1)$-step symmetric Lanczos process \cite[\S 7.6.1]{Bjorck1996}, using the method appeared in \cite[\S 7.6.1]{Bjorck1996}, Theorem \ref{Th3.3} can be concluded from \cite[Theorem 5]{Simon1984b}. By Wielandt-Hoffman theorem \cite[Theorem 8.6.4]{Golub2012}, the singular values of $B_{k}$ are, up to error $O(\tilde{q}(m,n,p)\epsilon)$, the singular values of $Q_{A}$. Therefore, Theorem \ref{Th3.3} means that if we use the SVD of $B_{k}$ to approximate some generalized singular values of $\{A,L\}$, the ``ghosts" can be avoided from appearing and the final accuracy of approximated quantities is close to the machine precision.

In \cite{Zha1996}, the author suggests that one can also use the SVD of $\widehat{B}_{k}$ to approximate some generalized singular values and vectors of $\{A,L\}$. Similar to the above theorem, combining \eqref{3.10} and \eqref{3.16}, we can obtain the following result.
\begin{theorem}\label{Th3.4}
	For the $k$-step JBD process with the semiorthogonalization strategy, suppose that the compact QR factorizations of $\widehat{U}_{k}$ and $\widehat{V}_{k}$ are
	$\widehat{U}_{k}=\widehat{M}_{k}\widehat{R}_{k}$ and $\widehat{V}_{k}=\widehat{N}_{k}\widehat{S}_{k}$, where the diagonals of the upper triangular matrices $\widehat{R}_{k}$ and $\widehat{S}_{k}$ are nonnegative. Then
	\begin{equation}\label{3.20}
		\widehat{M}_{k}^{T}Q_{L}\widehat{N}_{k}=\widehat{B}_{k}+\widehat{E}_{k} ,
	\end{equation}
	where the elements of $\widehat{E}_{k}$ are of $\delta=O(c_{4}(m,n,p)\|\widehat{B}_{k}^{-1}\|\epsilon)$.
\end{theorem}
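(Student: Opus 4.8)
The plan is to mirror the proof of Theorem~\ref{Th3.3}, replacing the lower-bidiagonalization relations \eqref{3.8}--\eqref{3.9} of $Q_{A}$ by the upper-bidiagonalization relations \eqref{3.10} and \eqref{3.16} of $Q_{L}$. First I would read these two relations together as the statement that the computation of $\widehat{U}_{k}$, $\widehat{V}_{k}$, $\widehat{B}_{k}$ is, up to roundoff, an upper Lanczos bidiagonalization of $Q_{L}$ carried out with the semiorthogonalization strategy: they have exactly the structure of the exact-arithmetic identities \eqref{2.7}, the only extra ingredients being the strictly-upper-triangular reorthogonalization matrices $\widehat{C}_{k},\widehat{D}_{k}=O(\sqrt{\epsilon})$ (Lemma~\ref{Lem3.2}) and the rounding-error terms $\widehat{F}_{k}$ and $\widehat{G}_{k}$, with $\|\widehat{F}_{k}\|=O(q_{3}(p,n)\epsilon)$ and, by Theorem~\ref{Th3.2}, $\|\widehat{G}_{k}\|=O(c_{4}(m,n,p)\|\widehat{B}_{k}^{-1}\|\epsilon)$. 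Thus the governing perturbation of this ``bidiagonalization'' is of order $\delta=O(c_{4}(m,n,p)\|\widehat{B}_{k}^{-1}\|\epsilon)$.

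Next, exactly as in the derivation of Theorem~\ref{Th3.3}, I would use the equivalence between the $k$-step Lanczos bidiagonalization and the $(2k+1)$-step symmetric Lanczos process \cite[\S 7.6.1]{Bjorck1996}, which recasts \eqref{3.10} and \eqref{3.16} as a semiorthogonal symmetric Lanczos recurrence for the augmented matrix $\begin{pmatrix} 0 & Q_{L} \\ Q_{L}^{T} & 0 \end{pmatrix}$. Simon's Theorem~5 \cite{Simon1984b} then applies in the same way: when the merely semiorthogonal Lanczos basis is replaced by its exactly orthonormal QR factor, the Ritz--Galerkin projection reproduces the recurrence coefficients up to a rounding-error matrix. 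Translating back through the bidiagonalization--tridiagonalization correspondence and using the compact QR factorizations $\widehat{U}_{k}=\widehat{M}_{k}\widehat{R}_{k}$, $\widehat{V}_{k}=\widehat{N}_{k}\widehat{S}_{k}$ then yields $\widehat{M}_{k}^{T}Q_{L}\widehat{N}_{k}=\widehat{B}_{k}+\widehat{E}_{k}$.

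It then remains to size $\widehat{E}_{k}$. The essential point, which is the content of Simon's theorem and the reason the threshold $\omega_{0}=\sqrt{\epsilon/(2k+1)}$ is chosen as it is, is that semiorthogonality at level $\omega_{0}$ together with the $O(\sqrt{\epsilon})$ entries of $\widehat{C}_{k},\widehat{D}_{k}$ is already enough for the projection to recover the coefficients to within the first-order rounding error of the recurrence; I would simply invoke this rather than re-establish the delicate cancellations that make $\sqrt{\epsilon}$-orthogonality suffice. Since the first-order error here is dominated by $\widehat{G}_{k}$ rather than by $\widehat{F}_{k}$, the entries of $\widehat{E}_{k}$ come out of order $\delta=O(c_{4}(m,n,p)\|\widehat{B}_{k}^{-1}\|\epsilon)$, as claimed.

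The main obstacle, and the single place where Theorem~\ref{Th3.4} genuinely departs from Theorem~\ref{Th3.3}, is the final bookkeeping of this error. In the $Q_{A}$ case the analogous perturbations were only $O(\epsilon)$, so the projected matrix was accurate to essentially machine precision; here relation \eqref{3.16} carries the comparatively large term $\widehat{G}_{k}$ whose norm is inflated by $\|\widehat{B}_{k}^{-1}\|$. The delicate part is therefore to confirm that passing from the semiorthogonal $\widehat{U}_{k},\widehat{V}_{k}$ to the exactly orthonormal $\widehat{M}_{k},\widehat{N}_{k}$---a perturbation of the bases of size $O(\sqrt{\epsilon})$---does not produce a surviving term of order $\|\widehat{B}_{k}^{-1}\|\sqrt{\epsilon}$, and that the factor $\|\widehat{B}_{k}^{-1}\|$ multiplies only the already first-order quantity $\widehat{G}_{k}$. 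Once that is checked, the bound $\delta$ follows.
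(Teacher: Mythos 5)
Your proposal is correct and follows essentially the same route as the paper: the paper likewise reads \eqref{3.10} together with \eqref{3.16} as an upper Lanczos bidiagonalization of $Q_{L}$ carried out with the semiorthogonalization strategy within error $\delta$, and then concludes exactly as in Theorem~\ref{Th3.3}, via the equivalence with the $(2k+1)$-step symmetric Lanczos process \cite[\S 7.6.1]{Bjorck1996} and \cite[Theorem 5]{Simon1984b}, with $\delta$ playing the role of the roundoff unit. The final check you flag as outstanding is precisely what Simon's theorem supplies at working precision $\delta$: since $\omega_{0}=\sqrt{\epsilon/(2k+1)}\leq\sqrt{\delta/(2k+1)}$, the deviations from orthogonality enter the projected matrix only quadratically, so no term of order $\|\widehat{B}_{k}^{-1}\|\sqrt{\epsilon}$ survives and the error remains $O(c_{4}(m,n,p)\|\widehat{B}_{k}^{-1}\|\epsilon)$.
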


Theorem \ref{Th3.4} indicates that the computed $\widehat{B}_{k}$ is approximate the Ritz-Galerkin projection of $Q_{L}$ on the subspace $span(\widehat{U}_{k})$ and $span(\widehat{V}_{k})$ within error $O(\delta)$. Therefore, if we use the SVD of $\widehat{B}_{k}$ to approximate some generalized singular values $\{A,L\}$, the ``ghosts" can be avoided from appearing and the final accuracy of approximated quantities is high enough, as long as $\|\widehat{B}_{k}^{-1}\|$ does not become too large.

%%%%%%%%%%%%%%%%%%%%%%%%%%%%%%%%%%%%%%%%%%%%%%%%%%%%%%%%%%%%%%%%%%%%%%%%%%%%%%%
\section{ The JBD process with partial reorthogonalization }\label{sec4}
In order to implement the semiorthogonalization strategy, we need to decide when to reorthogonalize, and which Lanczos vectors are necessary to include in the reorthogonalization step. By the analysis in the previous section, the process of computing $U_{k+1}$, $V_{k}$ and $B_{k}$ can be treated as the lower Lanczos bidiagonalization of $Q_{A}$, so our reorthogonalization strategy can be based on the partial reorthogonalization of $u_{i}$ and $v_{i}$; see \cite{Simon1984a,Larsen1998}. The central idea is that the levels of orthogonality of $u_{i}$ and $v_{i}$ satisfy the following coupled recurrences \cite[Theorem 6]{Larsen1998}.
 
\begin{theorem}\label{Th4.1}
	Let $\mu_{ji}=u_{j}^{T}u_{i}$, $\nu_{ji}=v_{j}^{T}v_{i}$
	and $\mu_{j0} \equiv 0$, $\nu_{j0} \equiv 0$. Then $\mu_{jj}=1$ for $1 \leq j \leq i+1$
	and $\nu_{jj}=1$ for $1 \leq j \leq i$, while
	\begin{align}\label{4.1}
		\beta_{i+1}\mu_{j,i+1}=\alpha_{j}\nu_{ji}+\beta_{j}\nu_{j-1,i}-\alpha_{i}\mu_{ji}-u_{j}^{T}f_{i}+v_{i}^{T}g_{j}
	\end{align}
	for $1 \leq j \leq i$, and 
	\begin{align}\label{4.2}
		\alpha_{i}\nu_{ji}=\beta_{j+1}\mu_{j+1,i}+\alpha_{j}\mu_{ji}-\beta_{i}\nu_{j,i-1}+u_{i}^{T}f_{j}-v_{j}^{T}g_{i}
	\end{align}
	for $1 \leq j \leq i-1$.
\end{theorem}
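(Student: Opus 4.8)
The plan is to derive both identities directly from the two three-term recurrences of the lower Lanczos bidiagonalization of $Q_A$, which by \eqref{3.8}--\eqref{3.9} governs the computation of $U_{k+1}$, $V_k$ and $B_k$. Writing these out columnwise, and working on steps where no reorthogonalization is applied so that the correction sums in \eqref{3.2} and \eqref{3.3} are absent and only the rounding error terms survive, I would record
\begin{align*}
\beta_{i+1}u_{i+1} &= Q_A v_i - \alpha_i u_i - f_i, \\
\alpha_i v_i &= Q_A^T u_i - \beta_i v_{i-1} - g_i,
\end{align*}
together with the standing assumption that the computed Lanczos vectors have unit length, which gives $\mu_{jj}=1$ and $\nu_{jj}=1$ immediately.

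For \eqref{4.1} I would premultiply the $u_{i+1}$ recurrence by $u_j^T$, obtaining $\beta_{i+1}\mu_{j,i+1}=u_j^T Q_A v_i-\alpha_i\mu_{ji}-u_j^T f_i$. The only term not yet in the desired form is $u_j^T Q_A v_i=(Q_A^T u_j)^T v_i$; here I would substitute $Q_A^T u_j=\alpha_j v_j+\beta_j v_{j-1}+g_j$ from the $v$-recurrence taken at index $j$, which turns $u_j^T Q_A v_i$ into $\alpha_j\nu_{ji}+\beta_j\nu_{j-1,i}+v_i^T g_j$ and yields \eqref{4.1} after collecting terms. Symmetrically, for \eqref{4.2} I would premultiply the $v_i$ recurrence by $v_j^T$, obtaining $\alpha_i\nu_{ji}=(Q_A v_j)^T u_i-\beta_i\nu_{j,i-1}-v_j^T g_i$, and then substitute $Q_A v_j=\beta_{j+1}u_{j+1}+\alpha_j u_j+f_j$ from the $u$-recurrence at index $j$, which rewrites $(Q_A v_j)^T u_i$ as $\beta_{j+1}\mu_{j+1,i}+\alpha_j\mu_{ji}+u_i^T f_j$ and produces \eqref{4.2}. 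The range $1\le j\le i-1$ in \eqref{4.2} is precisely what keeps the index $j+1$ within the block already computed.

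The computation itself is mechanical; the only points requiring care are the index bookkeeping (matching the ranges of $j$ to the steps already available, and keeping $v_{j-1}$ and $u_{j+1}$ in range) and the consistent placement and sign of the two rounding error families $f$ and $g$, so that the cross terms $-u_j^T f_i+v_i^T g_j$ in \eqref{4.1} and $+u_i^T f_j-v_j^T g_i$ in \eqref{4.2} emerge with the correct indices. I do not expect a genuine analytic obstacle, since this is the translation of \cite[Theorem 6]{Larsen1998} to the $Q_A$ bidiagonalization. I would, however, make explicit that these recurrences describe the propagation of the orthogonality levels \emph{between} reorthogonalizations, which is the reason the semiorthogonalization correction terms do not appear in \eqref{4.1}--\eqref{4.2} and why the recurrences can serve as the monitoring tool underlying the partial reorthogonalization algorithm developed next.
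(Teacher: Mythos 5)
Your proposal is correct: premultiplying the two error-bearing columnwise recurrences of the lower Lanczos bidiagonalization of $Q_{A}$ (i.e., \eqref{3.8}--\eqref{3.9} without the reorthogonalization sums) by $u_{j}^{T}$ and $v_{j}^{T}$, and then substituting the companion recurrence at index $j$, yields \eqref{4.1} and \eqref{4.2} with exactly the stated signs, error terms and index ranges. The paper itself gives no proof of Theorem \ref{Th4.1} but simply cites \cite[Theorem 6]{Larsen1998}, and your derivation is precisely the standard argument behind that cited result, including the correct observation that these recurrences describe the propagation of orthogonality between reorthogonalization steps, which is what makes them usable as the monitoring device in Algorithm \ref{alg2}.
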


Theorem \ref{Th4.1} shows that the inner products $u_{j}^{T}u_{i+1}$ and $v_{j}^{T}v_{i}$ are simply linear combinations of the inner products from the previous Lanczos vectors, thus we can estimate quantities $\mu_{j,i+1}$ and $\nu_{ji}$ if we have proper estimations of $|u_{i}^{T}f_{j}-v_{j}^{T}g_{i}|$ and $|u_{j}^{T}f_{i}-v_{i}^{T}g_{j}|$. The two quantities $|u_{i}^{T}f_{j}-v_{j}^{T}g_{i}|$ and $|u_{j}^{T}f_{i}-v_{i}^{T}g_{j}|$ are about $O(\epsilon)$ and accurate estimates of them have been discussed in detail in \cite{Larsen1998}. Since $\tilde{v}_{i}^{T}\tilde{v}_{j}\approx v_{i}^{T}v_{j}$, the estimated $\nu_{ji}$ is also a good estimate of $\tilde{v}_{j}^{T}\tilde{v}_{i}$. Therefore, using these estimates, we can monitor the loss of orthogonality of Lanczos vectors $u_{i}$ and $\tilde{v}_{i}$ directly without forming inner products, which enables us to determine when and against which of the previous Lanczos vectors to reorthogonalize. 

On the other hand, it has been shown in \cite{Li2019} that the orthogonality level of $\widehat{U}_{k}$ is affected not only by those of $U_{k+1}$ and $\widetilde{V}_{k}$, but also by a factor $\|\widehat{B}_{k}^{-1}\|$. Therefore, if $\widehat{B}_{k}$ is not very ill-conditioned, the orthogonality of $\widehat{U}_{k}$ will not be too bad even if we only reorthogonalize $u_{i}$ and $\tilde{v}_{i}$ but not $\hat{u}_{i}$. From the above discussions, we finally obtain the JBD process with partial reorthogonalization, which is described in Algorithm \ref{alg2}.

\begin{algorithm}[htb]
	\caption{The $k$-step JBDPRO}
	\begin{algorithmic}[1]\label{alg2}
		\STATE {Choosing a starting vector $b \in \mathbb{R}^{m}$,
			$\beta_{1}u_{1}=b,\ \beta_{1}=\| b\| $ }
		\STATE {$\alpha_{1}\tilde{v}_{1}=QQ^{T}\begin{pmatrix}
			u_{1} \\
			0_{p}
			\end{pmatrix} $}
		\STATE {  $\hat{\alpha}_{1}\hat{u}_{1}=\tilde{v}_{1}(m+1:m+p) $}
		\FOR{$i=1,2,\ldots,k,$}
		\STATE $r_{i+1}=\tilde{v}_{i}(1:m)-\alpha_{i}u_{i} $
		\STATE Update $\mu_{ji}\rightarrow \mu_{ji+1}, \ j=1,\cdots, i$
		\STATE Determine a set of indices $T_{i}\subseteq \{ j|1\leq j \leq i-1\}$
		\FOR{$j \in T_{i}$}
		\STATE $r_{i+1} = r_{i}-(u_{j}^{T}r_{i+1})u_{j}$
		\STATE	Reset $\mu_{j,i+1}$ to $O(\epsilon)$
		\ENDFOR
		\STATE $\beta_{i+1}u_{i+1} = r_{i+1}$
		\STATE $p_{i+1} =
		QQ^{T}\begin{pmatrix}
		u_{i+1} \\
		0_{p}
		\end{pmatrix}-\beta_{i+1}\tilde{v}_{i} $
		\STATE Update $\nu_{ji}\rightarrow \nu_{ji+1}, \ j=1,\cdots, i$
		\STATE Determine a set of indices $S_{i}\subseteq \{ j|1\leq j \leq i-1\}$
		\FOR{$j \in S_{i}$}
		\STATE $p_{i+1}=p_{i+1}-(v_{j}^{T}p_{i+1})v_{j}$
		\STATE Reset $\nu_{j,i+1}$ to $O(\epsilon)$
		\ENDFOR
		\STATE $\alpha_{i+1}\tilde{v}_{i+1}=p_{i}$
		\STATE $\hat{\beta}=(\alpha_{i+1}\beta_{i+1})/\hat{\alpha}_{i} $
		\STATE $\hat{\alpha}_{i+1}\hat{u}_{i+1}=
		(-1)^{i}\tilde{v}_{i+1}(m+1:m+p)-\hat{\beta}_{i}\hat{u}_{i} $
		\ENDFOR
	\end{algorithmic}
\end{algorithm}

In Algorithm \ref{alg2}, we need to determine two sets $T_{i}$ and $S_{i}$ at each iteration. The methods of choosing which previous Lanczos vectors to reorthogonalize have been discussed in detail by Simon \cite{Simon1984a} and Larsen \cite{Larsen1998}, for symmetric Lanczos process and Lanczos bidiagonalization, respectively. They introduce the $\eta$-criterion. Here we use the reorthogonalization of $u_{i+1}$ to explain it. At the $i$-th iteration, we only need to reorthogonalize against the vectors where $\mu_{ji+1}$ is larger than some constant $\eta$ satisfying $\epsilon < \eta < \omega_{0}$. It is sufficient to choose the vectors where $\mu_{ji+1}$ exceeds $\omega_{0}$ and their neighbors exceed $\eta$ to be included in the reorthogonalization step, while a few isolated components that exceeding $\eta$ are quite harmless \cite{Simon1984a,Larsen1998}. Therefore, the indices sets $T_{i}$ and $S_{i}$ can be described by the formulas
\begin{align}
	& T_{i} = \bigcup_{\mu_{j,i+1}>\omega_{0}} \{l | 1\leq j-r\leq l \leq j+s \leq i-1, \mu_{li+1} > \eta \} \label{4.3}  ,\\
	& S_{i} = \bigcup_{\nu_{j,i+1}>\omega_{0}} \{l | 1\leq j-r\leq l \leq j+s \leq i-1, \nu_{li+1} > \eta \} \label{4.4}  .
\end{align}

Simon \cite{Simon1984a} demonstrates that using the $\eta$-criterion in partial reorthogonalization could significantly reduces the amount of extra reorthogonalization work. Experimentally he finds that $\eta=\epsilon^{3/4}$ is the value that minimizes the total amount of reorthogonalization work for the symmetric Lanczos process. In Algorithm \ref{alg2}, we also choose $\eta=\epsilon^{3/4}$ to implement the partial reorthogonalization. 

For the JBDPRO algorithm with $\eta$-criterion, the orthogonality levels of $u_{i}$ and $\tilde{v}_{i}$ will be $O(\eta)$. By using the same method appeared in the proof of Lemma \ref{Lem3.2}, we can prove that $D_{k} = O(\eta)$ and $C_{k} = O(\eta)$. Notice that we do not reorthogonalize $\hat{u}_{i}$, which can save a big amount of reorthogonalization work. The following theorem says that if $\widehat{B}_{k}$ is not very ill-conditioned, the orthogonality of $\widehat{U}_{k}$ will be at a desired level.
\begin{theorem}\label{Th4.2}
	For the $k$-step JBDPRO algorithm, the orthogonality level of $\widehat{U}_{k}$ satisfies
	\begin{align}\label{4.5}
	\xi(\widehat{U}_{k})= O(\|\widehat{B}_{k}^{-1}\|^{2}\eta) .
	\end{align}
\end{theorem}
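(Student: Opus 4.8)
The plan is to bound $\xi(\widehat U_k)=\|I_k-\widehat U_k^T\widehat U_k\|$ directly, by expressing $\widehat U_k$ through $\widehat B_k^{-1}$ and reducing everything, via the CS identity $Q_A^TQ_A+Q_L^TQ_L=I_n$, to quantities already controlled on the $Q_A$-side. Since the JBDPRO algorithm does not reorthogonalize $\hat u_i$, we have $\widehat C_k=0$, so relation \eqref{3.10} reads $Q_L\widehat V_k=\widehat U_k\widehat B_k+\widehat F_k$ with $\|\widehat F_k\|=O(\epsilon)$. First I would solve for $\widehat U_k=(Q_L\widehat V_k-\widehat F_k)\widehat B_k^{-1}$ and form
\[
\widehat U_k^T\widehat U_k=\widehat B_k^{-T}\,\widehat V_k^TQ_L^TQ_L\widehat V_k\,\widehat B_k^{-1}+O(\|\widehat B_k^{-1}\|^2\epsilon),
\]
where the error absorbs the cross terms in $\widehat F_k$, using $\|Q_L\|\le 1$ and $\|\widehat V_k\|=\|V_k\|\le\sqrt{1+\xi(V_k)}=1+O(\eta)$.

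The core of the argument is to show that $\widehat V_k^TQ_L^TQ_L\widehat V_k=\widehat B_k^T\widehat B_k+O(\eta)$. I would substitute $Q_L^TQ_L=I_n-Q_A^TQ_A$ and $\widehat V_k=V_kP$ (with $P^2=I_k$) to get $\widehat V_k^TQ_L^TQ_L\widehat V_k=P(V_k^TV_k-V_k^TQ_A^TQ_A V_k)P$. On the $Q_A$-side, relation \eqref{3.8}, $Q_AV_k=U_{k+1}(B_k+C_k)+F_k$, together with $U_{k+1}^TU_{k+1}=I_{k+1}+O(\eta)$ and $C_k=O(\eta)$ (the JBDPRO analogue of Lemma \ref{Lem3.2}), yields $V_k^TQ_A^TQ_A V_k=B_k^TB_k+O(\eta)$. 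Combining this with $V_k^TV_k=I_k+O(\eta)$ gives $\widehat V_k^TQ_L^TQ_L\widehat V_k=I_k-PB_k^TB_kP+O(\eta)$.

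Next I would eliminate $B_k^TB_k$ using Theorem \ref{Th3.1}: from \eqref{3.13}, $B_k^TB_k=I_k+H_k-P\widehat B_k^T\widehat B_kP$, so $PB_k^TB_kP=I_k+PH_kP-\widehat B_k^T\widehat B_k$, and since $H_k=O(\epsilon)$ we obtain $\widehat V_k^TQ_L^TQ_L\widehat V_k=\widehat B_k^T\widehat B_k+O(\eta)$. Substituting back gives $\widehat U_k^T\widehat U_k=I_k+\widehat B_k^{-T}O(\eta)\widehat B_k^{-1}+O(\|\widehat B_k^{-1}\|^2\epsilon)$, and taking norms, using $\|\widehat B_k^{-T}\|=\|\widehat B_k^{-1}\|$, yields $\xi(\widehat U_k)=O(\|\widehat B_k^{-1}\|^2\eta)$ since the $\eta$-term dominates the $\epsilon$-term.

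The main obstacle will be bookkeeping the error magnitudes through the chain of substitutions: one must track which terms are $O(\eta)$ (coming from the deliberately relaxed orthogonality of $u_i$ and $\tilde v_i$ and from $C_k,D_k=O(\eta)$) versus which are the harmless $O(\epsilon)$ terms (from $F_k$, $\widehat F_k$, $H_k$), and confirm that after the two-sided multiplication by $\widehat B_k^{-1}$ the leading $O(\eta)$ contribution is amplified by exactly $\|\widehat B_k^{-1}\|^2$ while no larger error survives. A secondary point to verify carefully is that the identity $Q_A^TQ_A+Q_L^TQ_L=I_n$ and the relation $\widehat V_k=V_kP$ hold to sufficient accuracy in finite precision, so that the exact-arithmetic algebra used above remains valid up to the claimed $O(\eta)$ and $O(\epsilon)$ remainders.
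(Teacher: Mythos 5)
Your proposal is correct and follows essentially the same route as the paper's own proof: both exploit $\widehat{C}_k=0$ in \eqref{3.10}, use the CS identity $Q_A^TQ_A+Q_L^TQ_L=I_n$ together with \eqref{3.8} (with $C_k=O(\eta)$ and $\xi(U_{k+1}),\xi(V_k)=O(\eta)$) to reduce $\widehat{V}_k^TQ_L^TQ_L\widehat{V}_k$ to $B_k$-quantities, invoke Theorem \ref{Th3.1} to trade $B_k^TB_k$ for $\widehat{B}_k^T\widehat{B}_k$, and incur the factor $\|\widehat{B}_k^{-1}\|^2$. The only difference is cosmetic: the paper keeps the identity in the form $\widehat{B}_k^T(I_k-\widehat{U}_k^T\widehat{U}_k)\widehat{B}_k=(I_k-\widehat{V}_k^T\widehat{V}_k)-PB_k^T(I_{k+1}-U_{k+1}^TU_{k+1})B_kP+O(\eta)$ and multiplies by $\widehat{B}_k^{-1}$ only at the last step, whereas you solve for $\widehat{U}_k$ and introduce $\widehat{B}_k^{-1}$ from the outset.
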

\begin{proof}
	Since we do not reorthogonalize any $\hat{u}_{i}$, which means that $\widehat{C}_{k}=0$, by \eqref{3.10} we have
	$$\widehat{B}_{k}^{T}\widehat{U}_{k}^{T}\widehat{U}_{k}\widehat{B}_{k} = (Q_{L}\widehat{V_{k}}-\widehat{F}_{k})^{T}
	(Q_{L}\widehat{V_{k}}-\widehat{F}_{k}) ,$$
	and
	\begin{align}\label{4.6}
		\begin{split}
			& \ \ \ \ \ \widehat{B}_{k}^{T}(I_{k}-\widehat{U}_{k}^{T}\widehat{U}_{k})\widehat{B}_{k}
			=\widehat{B}_{k}^{T}\widehat{B}_{k}-(Q_{L}\widehat{V}_{k}-\widehat{F})^{T}(Q_{L}\widehat{V}_{k}-\widehat{F}_{k}) \\
			& = I_{k}-PB_{k}^{T}B_{k}P+H_{k}-\widehat{V}_{k}^{T}Q_{L}^{T}Q_{L}\widehat{V}_{k}+ 
			 \widehat{V}_{k}^{T}Q_{L}^{T}\widehat{F}_{k}+\widehat{F}_{k}^{T}Q_{L}\widehat{V}_{k}-
			\widehat{F}_{k}^{T}\widehat{F}_{k} \\
			& = I_{k}-PB_{k}^{T}B_{k}P - PV_{k}^{T}(I_{k}-Q_{A}^{T}Q_{A})V_{k}P+ 
			\widehat{V}_{k}^{T}Q_{L}^{T}\widehat{F}_{k}+\widehat{F}_{k}^{T}Q_{L}\widehat{V}_{k}-
			\widehat{F}_{k}^{T}\widehat{F}_{k}+ H_{k} .
		\end{split}
	\end{align}
	By \eqref{3.8}, we have 
	\begin{align}\label{4.7}
		\begin{split}
			V_{k}^{T}Q_{A}^{T}Q_{A}V_{k}
			& = [U_{k+1}(B_{k}+D_{k})+F_{k}]^{T}[U_{k+1}(B_{k}+D_{k})+F_{k}] \\
			& = B_{k}^{T}U_{k+1}^{T}U_{k+1}B_{k} + \bar{E}_{3} ,
		\end{split}
	\end{align}
	where 
	\begin{align*}
		\bar{E}_{3}
		& = D_{k}^{T}U_{k+1}^{T}U_{k+1}B_{k} + B_{k}^{T}U_{k+1}^{T}U_{k+1}D_{k} + (B_{k}+D_{k})^{T}U_{k+1}^{T}F_{k}+ \\
		& \ \ \ \ \ F_{k}^{T}U_{k+1}(B_{k}+D_{k}) + D_{k}^{T}U_{k+1}^{T}U_{k+1}D_{k} +F_{k}^{T}F_{k}  .
	\end{align*}
	Since $D_{k} = O(\eta)$ , with simple calculation we can obtain
	$$\|\bar{E}_{3}\|= O(\eta) .$$
	Substituting \eqref{4.7} into \eqref{4.6}, we have
	\begin{align*}
		\widehat{B}_{k}^{T}(I_{k}-\widehat{U}_{k}^{T}\widehat{U}_{k})\widehat{B}_{k}
		& = (I_{k}-\widehat{V}_{k}^{T}\widehat{V}_{k}) - PB_{k}^{T}(I_{k+1}-U_{k+1}^{T}U_{k+1}) B_{k}P+ \\
		& \ \ \ \ \  \widehat{V}_{k}^{T}Q_{L}^{T}\widehat{F}_{k}+\widehat{F}_{k}^{T}Q_{L}\widehat{V}_{k}-
		\widehat{F}_{k}^{T}\widehat{F}_{k}+ H_{k} + P\bar{E}_{3}P .
	\end{align*}
	With simple calculation we can obtain
	$$\|\widehat{V}_{k}^{T}Q_{L}^{T}\widehat{F}_{k}+\widehat{F}_{k}^{T}Q_{L}\widehat{V}_{k}-
		\widehat{F}_{k}^{T}\widehat{F}_{k}+H_{k}\| = O(c_{3}(m,n,p)\epsilon) .$$
	Therefore,
	\begin{align*}
		\widehat{B}_{k}^{T}(I_{k}-\widehat{U}_{k}^{T}\widehat{U}_{k})\widehat{B}_{k}
		& = (I_{k}-\widehat{V}_{k}^{T}\widehat{V}_{k}) - PB_{k}^{T}(I_{k+1}-U_{k+1}^{T}U_{k+1}) B_{k}P + O(\eta) .
	\end{align*}
	
	Notice that in the JBDPRO algorithm, we have
	$\xi(\widehat{V}_{k})=\|I_{k}-\widehat{V}_{k}^{T}\widehat{V}_{k}\|= O(\eta)$ and
	$\xi(U_{k+1})=\|I_{k+1}-U_{k+1}^{T}U_{k+1}\| = O(\eta)$. We finally obtain
	$$\xi(\widehat{U}_{k})
		= \|I_{k}-\widehat{U}_{k}^{T}\widehat{U}_{k}\|
		 \leq \|\widehat{B}_{k}^{-1}\|^{2}[ \|B_{k}\|^{2}O(\eta)+O(\eta)] 
		= O(\|\widehat{B}_{k}^{-1}\|^{2}\eta) ,$$
	which is the desired result.	
\end{proof}

%%% JBDPRO，用B_k和B_hat来计算GSVD的精度和收敛性 %%%
Since the orthogonality level of $\hat{v}_{i}$ is $O(\eta)$, which is below $\sqrt{\delta/(2k+1)}$, by Theorem \ref{Th3.4}, the relation \eqref{3.20} holds as long as $\kappa(\widehat{U}_{k})$ is below $\sqrt{\delta/(2k+1)}$, i.e.,
the following condition should be satisfied:
\begin{equation*}
\|\widehat{B}_{k}^{-1} \|^{2}\epsilon^{3/4}  \lesssim 
\sqrt{\delta/(2k+1)}  ,
\end{equation*}
which leads to 
\begin{equation}\label{4.8}
\|\widehat{B}_{k}^{-1} \|^{3}  \lesssim \dfrac{c_{4}(m,n,p)}{(2k+1)\sqrt{\epsilon}} .
\end{equation}
Therefore, for the JBDPRO algorithm, if we use $\widehat{B}_{k}$ to approximate some generalized singular values of $\{A,L\}$, the ``ghosts" can be avoided from appearing and the final accuracy of approximated quantities is high enough, as long as the growth of $\|\widehat{B}_{k}^{-1}\|$ can be controlled by \eqref{4.8}. 
 
%%%%%%%%%%%%%%%%%%%%%%%%%%%%%%%%%%%%%%%%%%%%%%%%%%%%%%%%%%%%%%%%%%%%%%
\section{Numerical experiments}\label{sec5}
In this section, we provide several numerical examples to illustrate our theory about the properties of the JBD process with the semiorthogonalization strategy and the JBDPRO algorithm. The matrices are constructed by ourselves or chosen from the University of Florida Sparse Matrix Collection \cite{Davis2011}. For the first pair, the matrices $A$ and $L$, which are denoted by $A_{c}$ and $L_{s}$, respectively, are constructed by ourselves. Let $n=800$ and $C=diag(c)$, where $c=(\dfrac{3n}{2}, \dfrac{3n}{2}-1, \dots, \dfrac{n}{2}+1)/2n$. Then let $s = ((1-c_{1}^{2})^{1/2}, \dots, (1-c_{n}^{2})^{1/2})$ and $S=diag(s)$. Let $D$ be the matrix generated by the MATLAB built-in function $\texttt{D=gallery(`orthog',n,2)}$, which means that $D$ is a symmetric orthogonal matrix. Finally, let $A=CD$ and $L=SD$. For the second pair, $A$ and $L$ are the square matrices {\sf dw2048} and {\sf rdb2048} from electromagnetics problems and computational fluid dynamics problems, respectively. For the third pair, $A$ is the square matrix {\sf ex31} from computational fluid dynamics problems, $L_{m}=diag(l)$, where $l=(3m, 3m-1, \dots, 2m+1)/4000$ and $m$ is the row number of $A$. For the fourth pair, $A$ is the square matrix {\sf rdb5000} from computational fluid dynamics problems and $L=L_{1}$, which is the discrete approximation of the first order derivative operator. The properties of our test matrices are described in table \ref{tab1}, where $cond(\cdot)$ means the condition number of a matrix.
\begin{equation}\label{5.1}
	L_1 = \left(
	\begin{array}{ccccc}
		1 & -1 &  &  &  \\
		& 1 & -1 &  &  \\
		&  & \ddots & \ddots &  \\
		&  &  & 1  & -1\\
	\end{array}
	\right)\in \mathbb{R}^{(n-1)\times n},
\end{equation}

\begin{table}[htp]
	\centering
	\caption{Properties of the test matrices.}
	\begin{tabular}{|l|l|l|l|l|l|}
		\hline 
		$A$ &$m\times n$ &$cond(A)$ &$L$  &$p\times n$  &$cond(L)$ \\  \hline  
		$A_{c}$	 &$800\times 800$	&2.99 &$L_{s}$ &$800\times 800$  &1.46 	\\  
		{\sf rdb2048} &$2048\times 2048$  &2026.80 &{\sf dw2048}	 &$2048\times 2048$	&5301.50  \\  
		{\sf ex31}  &$3909\times 3909$  &$1.01\times 10^{6}$  &$L_{m}$  &$3909\times 3909$ & 1.50 \\  
		{\sf rdb5000}  &$5000\times 5000$   & 4304.90  &$L_{1}$  &$4999\times5000$   & 3183.1 \\ \hline  
	\end{tabular}
	\label{tab1}
\end{table}

The numerical experiments are performed on an Intel (R) Core (TM) i7-7700 CPU 3.60GHz with the main memory 8GB using the Matlab R2017b with the machine precision $\epsilon = 2.22 \times 10^{-16}$ under the Windows 10 operating system.  For each matrix pair $\{A, L\}$, we use $b=(1,\dots,1)^{T}\in\mathbb{R}^{m}$ as the starting vector of the JBD process, where $m$ is the row number of $A$. We mention that our results are based on the assumption that the inner least squares problem \eqref{2.1} is solved accurately at each step. Therefore, for the JBD process in the numerical experiments, the $QR$ factorization of 
$\begin{pmatrix}
A \\ L
\end{pmatrix}$
is computed, and $QQ^{T}\tilde{u}_{i}$ is computed explicitly using $Q$ at each step. 

In the JBD process, in order to make sure that $\widetilde{V}_{k}$ does not deviate far from the column space of $Q$, the order of the matrices in the matrix pair $\{A,L\}$ may need to be adjusted; see \eqref{2.14} and Remark \ref{rem2.1}. Especially, in the four test examples, we implement the JBD process of {\sf \{$L_{m}$,ex31\}} instead of {\sf \{ex31,$L_{m}$\}}.

\begin{figure}[htp]
	\begin{minipage}{0.48\linewidth}
		\centerline{\includegraphics[width=5.5cm,height=3.5cm]{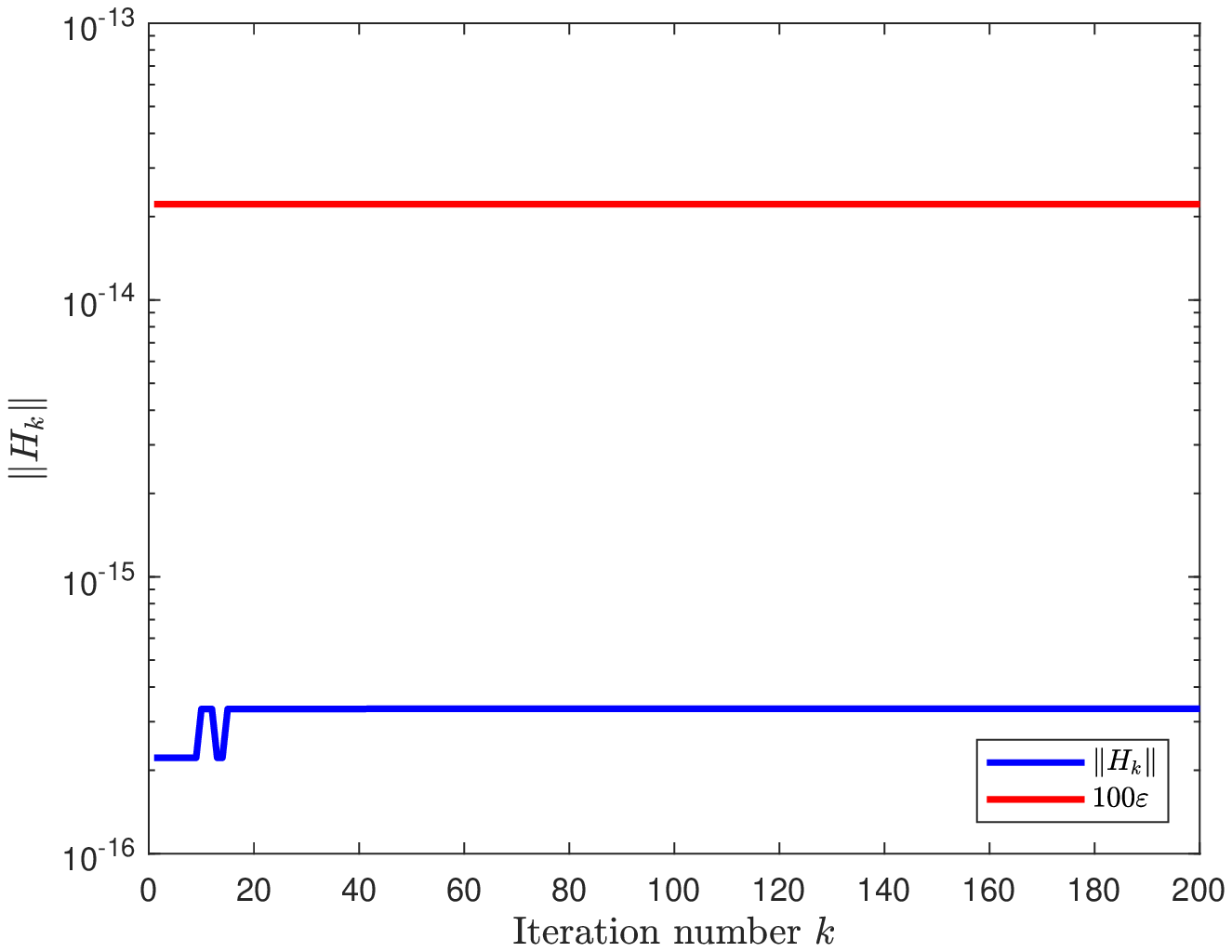}}
		\centerline{(a)}
	\end{minipage}
	\hfill
	\begin{minipage}{0.48\linewidth}
		\centerline{\includegraphics[width=5.5cm,height=3.5cm]{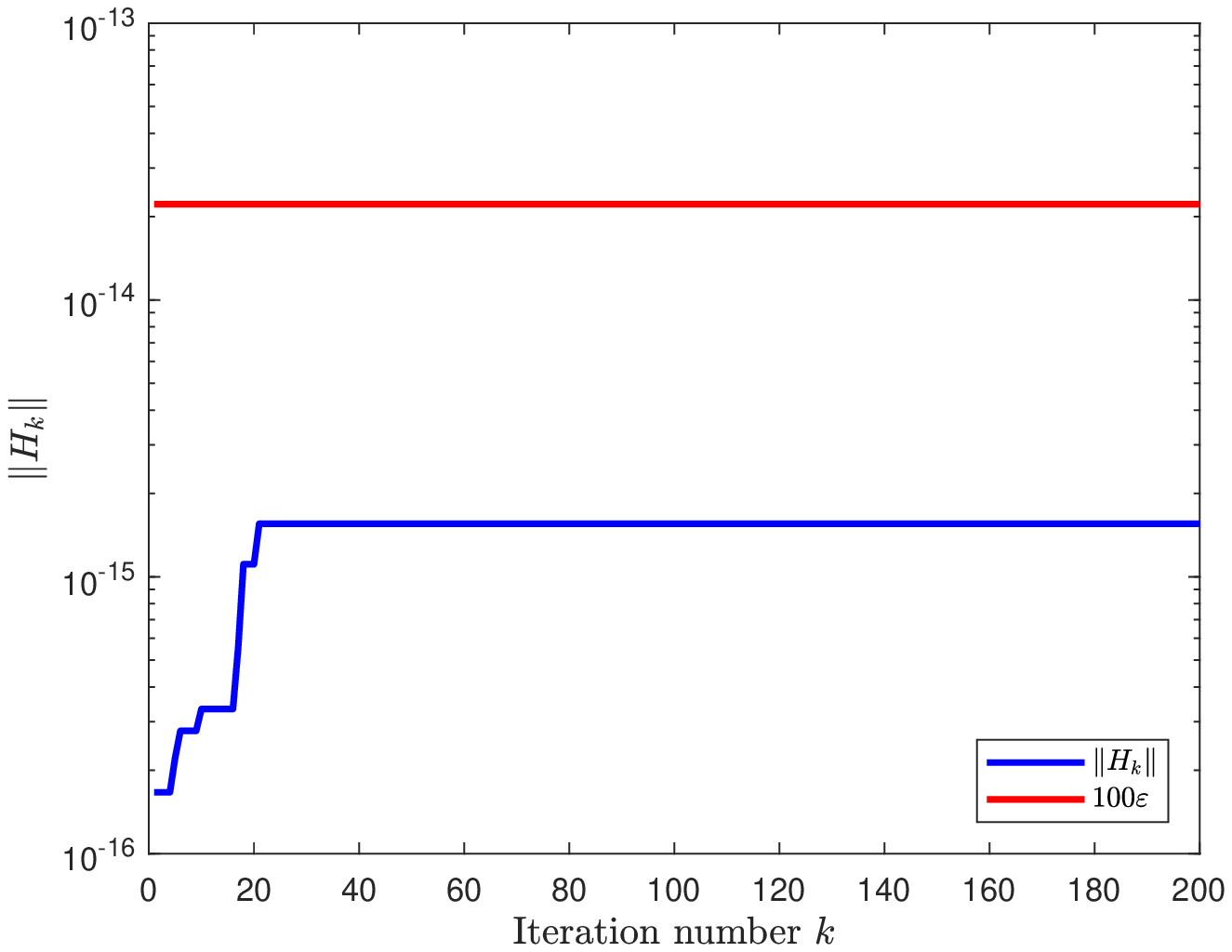}}
		\centerline{(b)}
	\end{minipage}
	
	\vfill
	\begin{minipage}{0.48\linewidth}
		\centerline{\includegraphics[width=5.5cm,height=3.5cm]{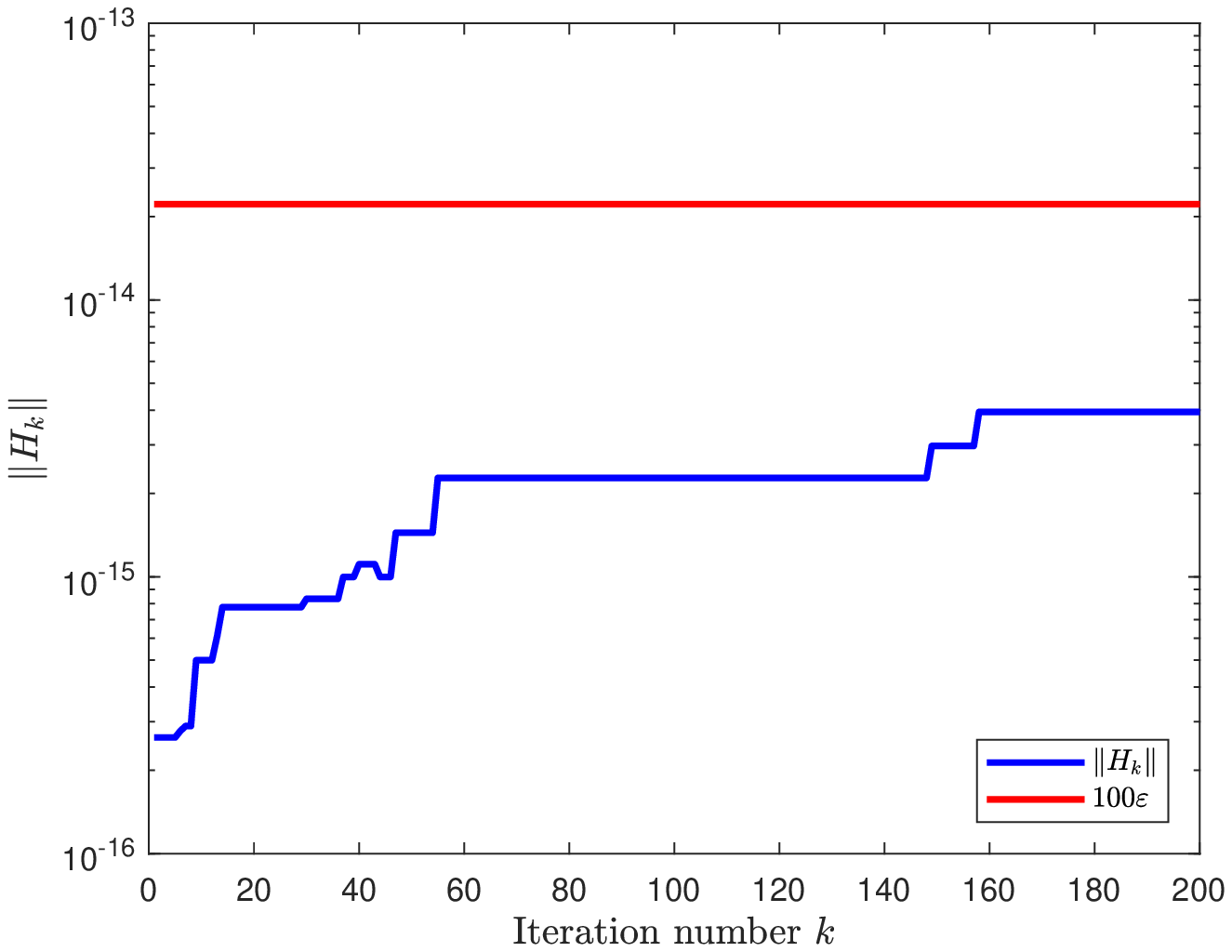}}
		\centerline{(c)}
	\end{minipage}
	\hfill
	\begin{minipage}{0.48\linewidth}
		\centerline{\includegraphics[width=5.5cm,height=3.5cm]{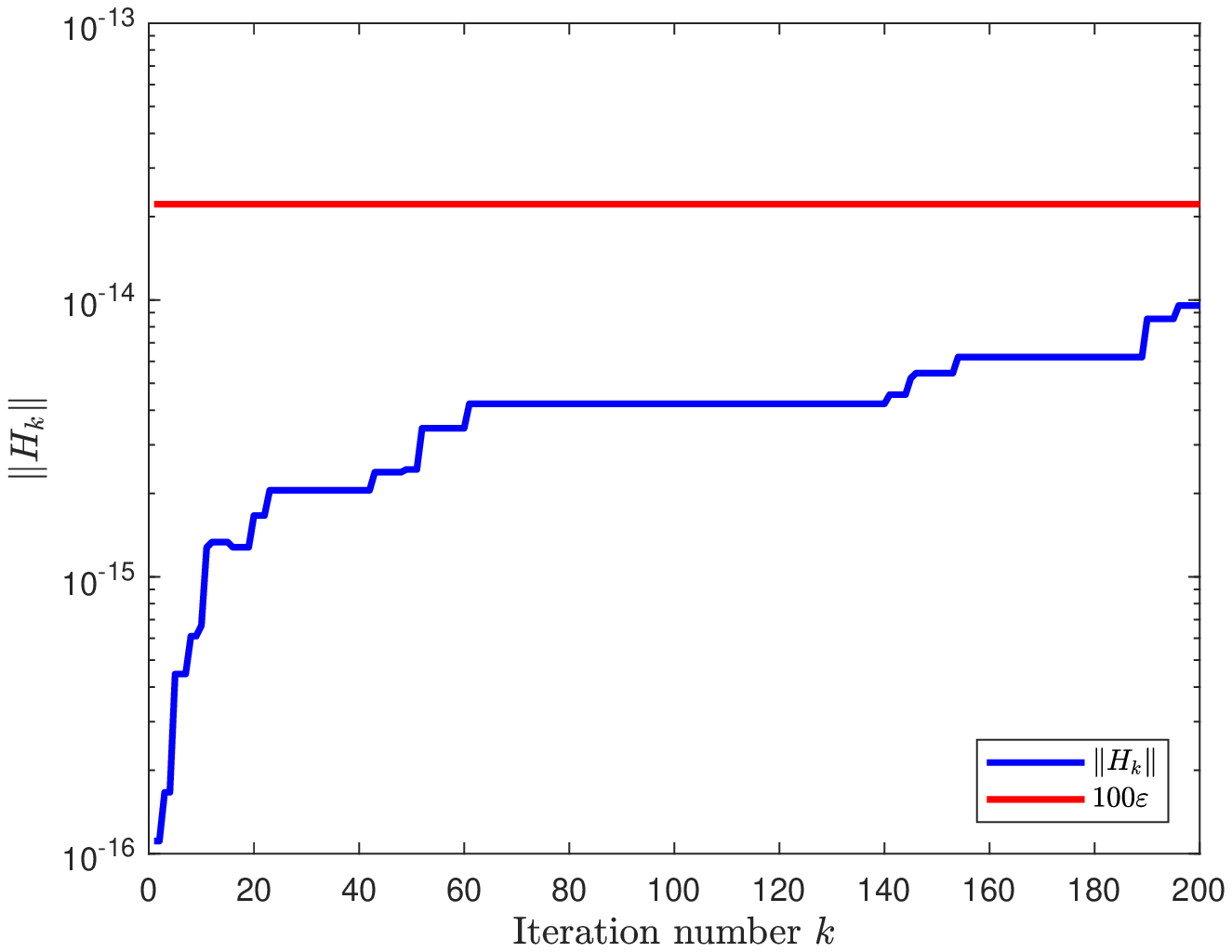}}
		\centerline{(d)}
	\end{minipage}
	\caption{ $\|H_{k}\|$ and its upper bound: (a) {\sf \{$A_{c}$,$L_{s}$\}}; (b) {\sf \{rdb2048,dw2048\}};
		(c) {\sf \{$L_{m}$,ex31\}}; (d) {\sf \{rdb5000,$L_{1}$\}}.}
	\label{fig1}
\end{figure}

Figure \ref{fig1} depicts the the variation of $\|H_{k}\|=\|I_{k}-B_{k}^{T}B_{k}-P\widehat{B}_{k}^{T}\widehat{B}_{k}P\|$ and its upper bound in \eqref{3.13} as the iteration number $k$ increases from $1$ to $200$. Notice \eqref{3.18}. We use $100\epsilon$ as the upper bound of $\|H_{k}\|$. From the four examples, we find that as the matrix dimensions become bigger, $\|H_{k}\|$ grows very slightly as the iteration number $k$ increases,  due to that $\|H_{k}\|$ are dependent on the dimensions of matrices $A$ and $L$.

\begin{figure}[htp]
	\begin{minipage}{0.48\linewidth}
		\centerline{\includegraphics[width=5.5cm,height=3.5cm]{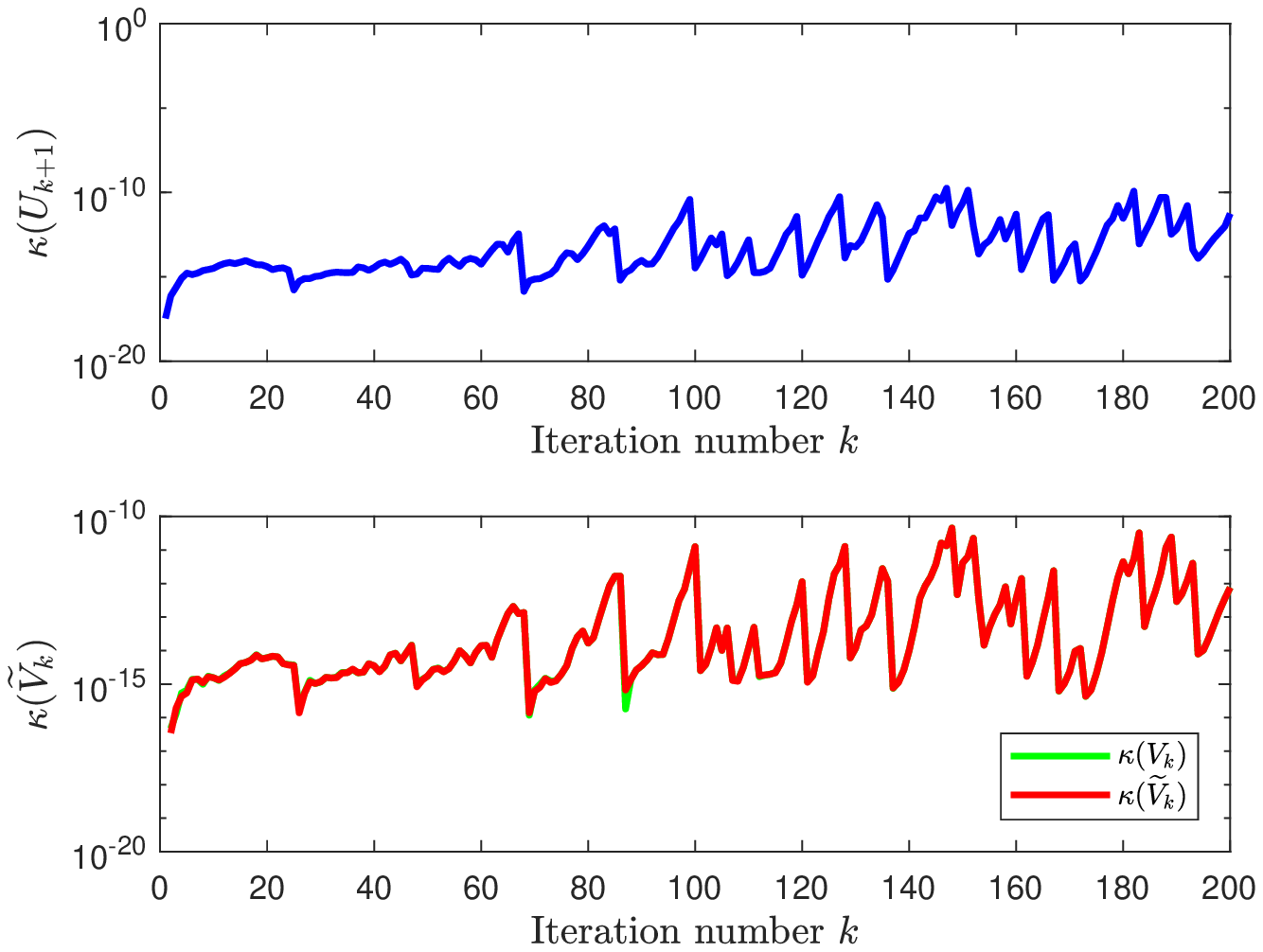}}
		\centerline{(a)}
	\end{minipage}
	\hfill
	\begin{minipage}{0.48\linewidth}
		\centerline{\includegraphics[width=5.5cm,height=3.5cm]{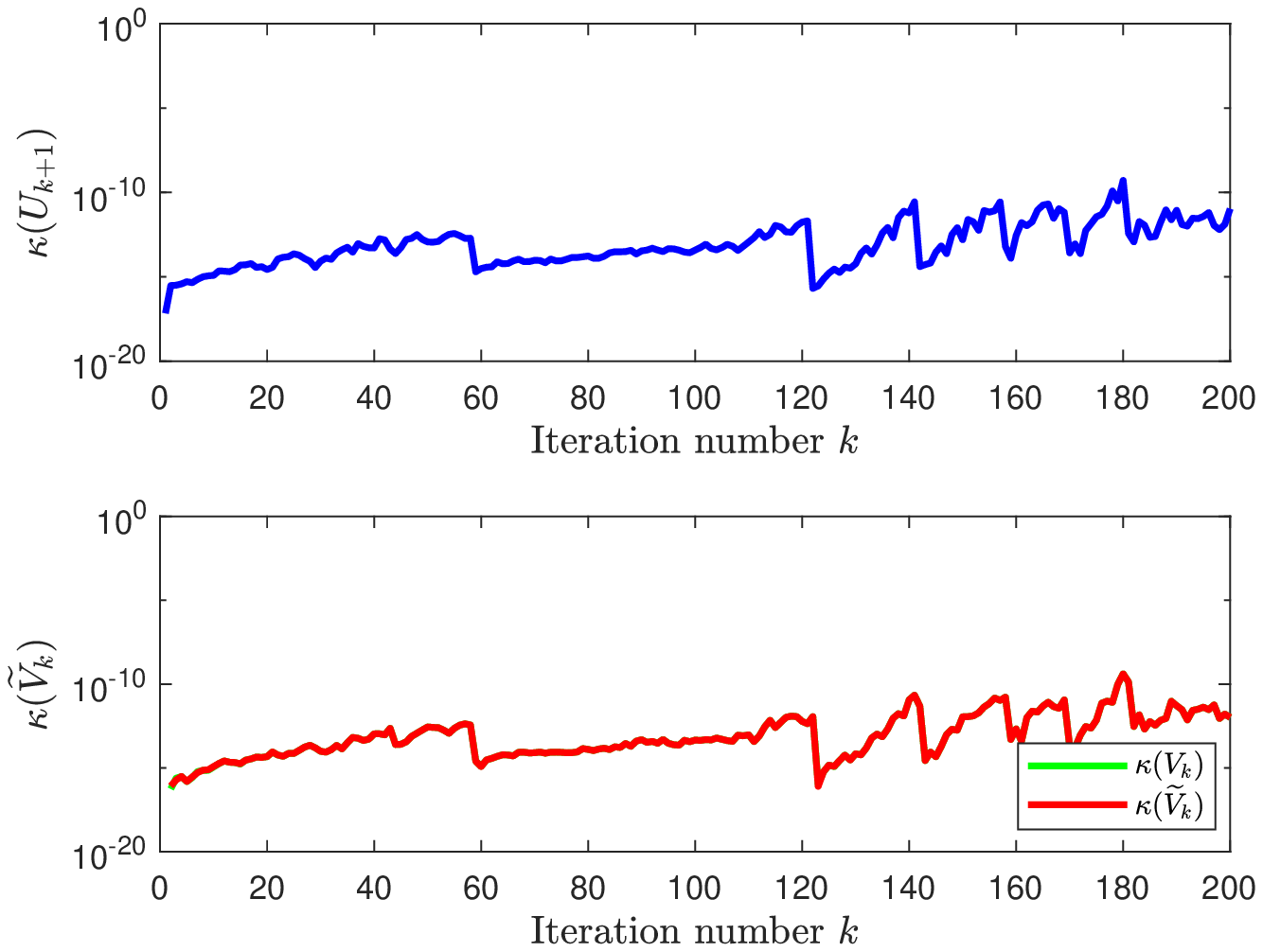}}
		\centerline{(b)}
	\end{minipage}
	\caption{Orthogonality levels of $u_{i}$, $\tilde{v}_{i}$ and $v_{i}$:  (a) {\sf \{$L_{m}$,ex31\}}; (b) {\sf \{rdb5000,$L_{1}$\}}.}
	\label{fig2}
\end{figure}

\begin{figure}[htp]
	\begin{minipage}{0.48\linewidth}
		\centerline{\includegraphics[width=5.5cm,height=3.5cm]{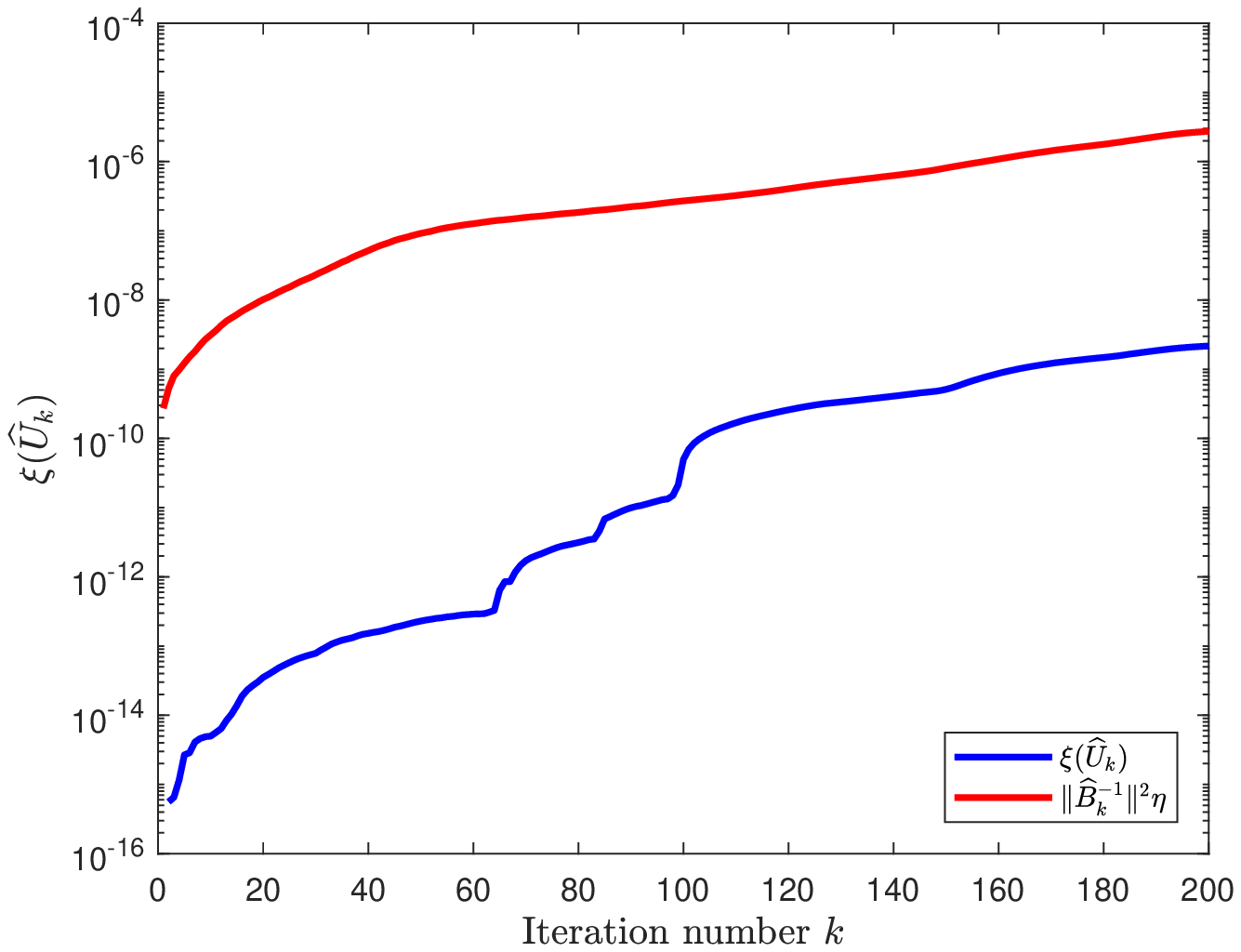}}
		\centerline{(a)}
	\end{minipage}
	\hfill
	\begin{minipage}{0.48\linewidth}
		\centerline{\includegraphics[width=5.5cm,height=3.5cm]{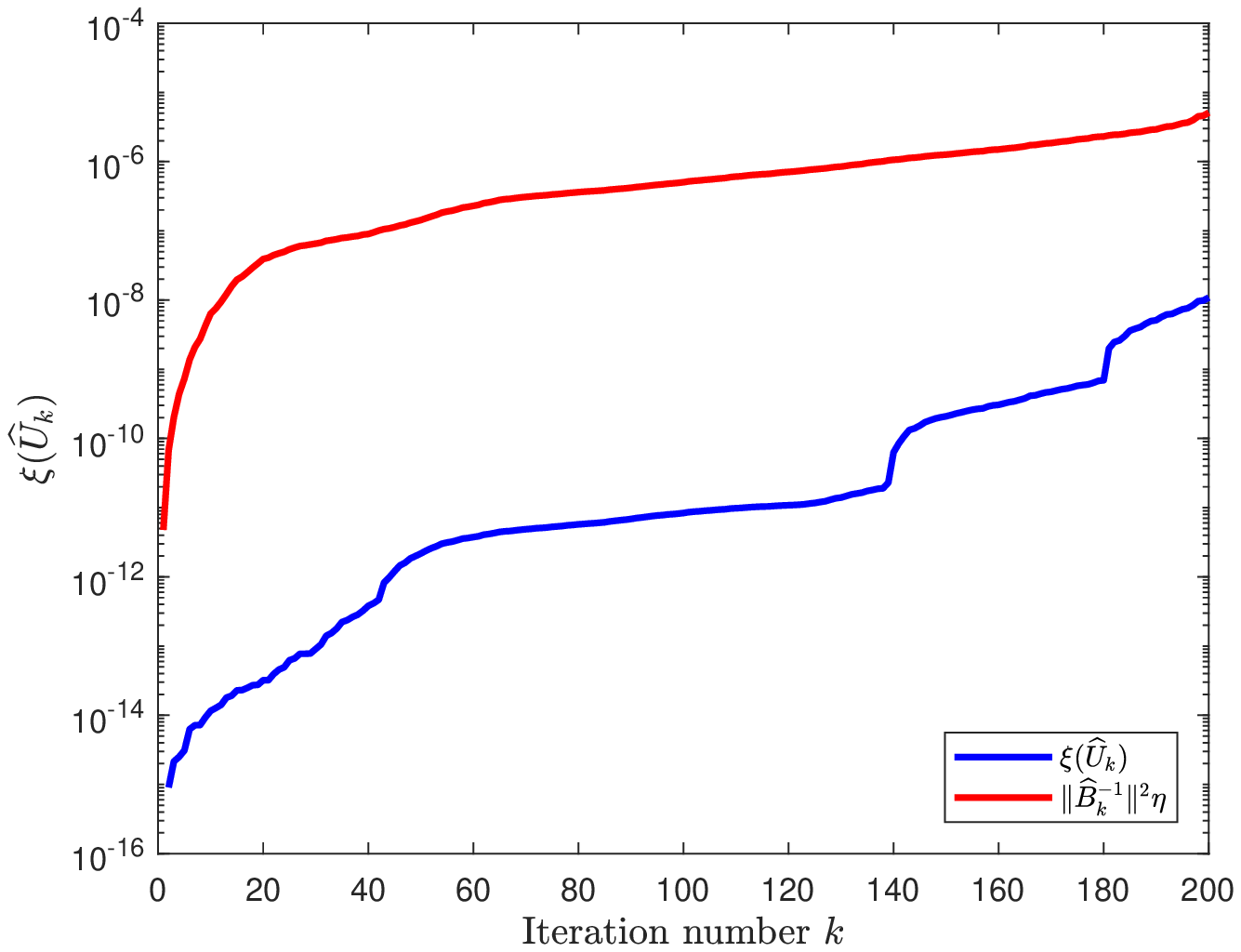}}
		\centerline{(b)}
	\end{minipage}
	\caption{Orthogonality levels of $\hat{u}_{i}$: (a) {\sf \{$L_{m}$,ex31\}}; (b) {\sf \{rdb5000,$L_{1}$\}}.}
	\label{fig3}
\end{figure}

Figure \ref{fig2} and figure \ref{fig3} depict the orthogonality levels of $u_{i}$, $\tilde{v}_{i}$ and $\hat{u}_{i}$ computed by the JBDPRO algorithm. We use matrix pairs {\sf \{$L_{m}$,ex31\}} and {\sf \{rdb5000,$L_{1}$\}} to illustrate the results, the cases of {\sf \{$A_{c}$,$L_{s}$\}} and {\sf \{rdb2048,dw2048\}} are similar and we omit them. The $\eta$-criterion is used and $\eta=\epsilon^{3/4}\approx 10^{-12}$. From the figures we find that in the first few iteration steps, the orthogonality levels of $u_{i}$ and $\tilde{v}_{i}$ grows gradually until they exceed $\eta$, which means the loss of orthogonality. Then, the partial reorthogonalization is applied to $u_{i}$ and $\tilde{v}_{i}$, making the orthogonality levels suddenly jumping down, and then the reorthogonalization is not used in a few later steps until the orthogonality levels exceed $\eta$ again. The algorithm continues in this way and the orthogonality levels of $u_{i}$ and $\tilde{v}_{i}$ fluctuate around $\eta$ as the iteration number $k$ continues increasing. We also depict the orthogonality levels of $v_{i}$, and we can find that the orthogonality levels of $v_{i}$ and $\tilde{v}_{i}$ are almost equal. From figure \ref{fig3}, we find that the orthogonality level of $\hat{u}_{i}$ is mainly affected by the growth of $\|\widehat{B}_{k}^{-1}\|$. If $\|\widehat{B}_{k}^{-1}\|$ does not become too large, then the orthogonality of $\hat{u}_{i}$ will be at a desired level although we do not reorthogonalize any $\hat{u}_{i}$ in the JBDPRO algorithm.

\begin{figure}[htp]
	\begin{minipage}{0.48\linewidth}
		\centerline{\includegraphics[width=5.5cm,height=3.5cm]{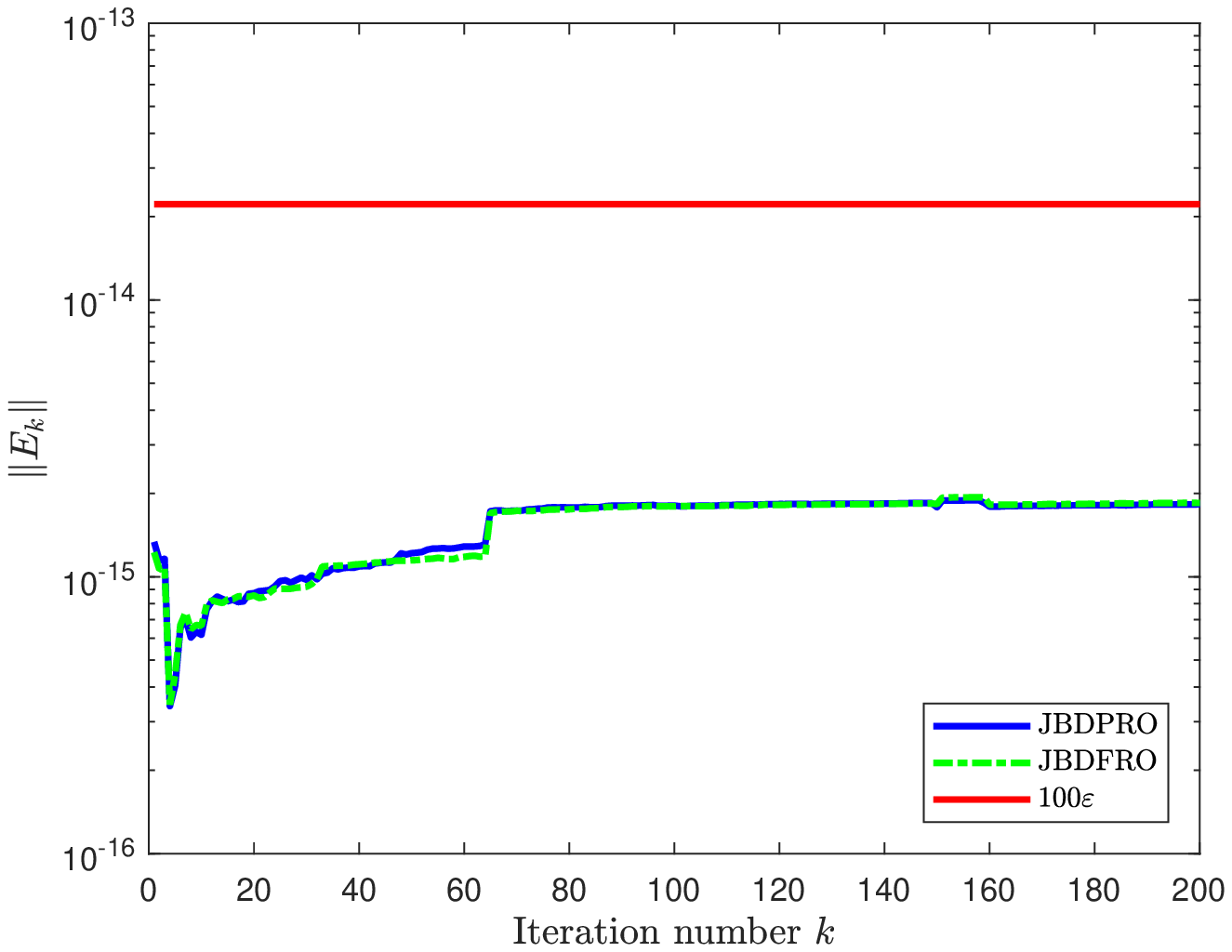}}
		\centerline{(a)}
	\end{minipage}
	\hfill
	\begin{minipage}{0.48\linewidth}
		\centerline{\includegraphics[width=5.5cm,height=3.5cm]{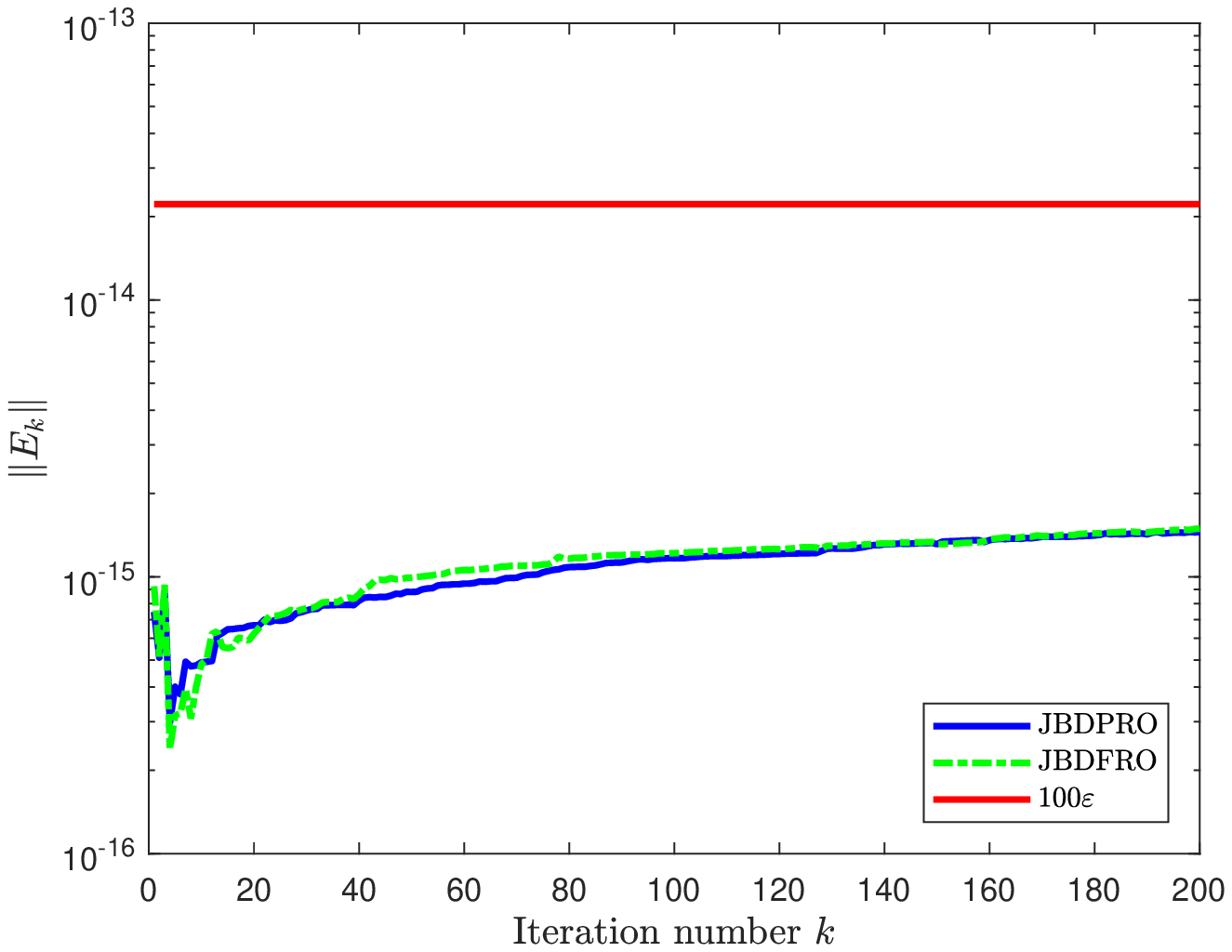}}
		\centerline{(b)}
	\end{minipage}
	
	\vfill
	\begin{minipage}{0.48\linewidth}
		\centerline{\includegraphics[width=5.5cm,height=3.5cm]{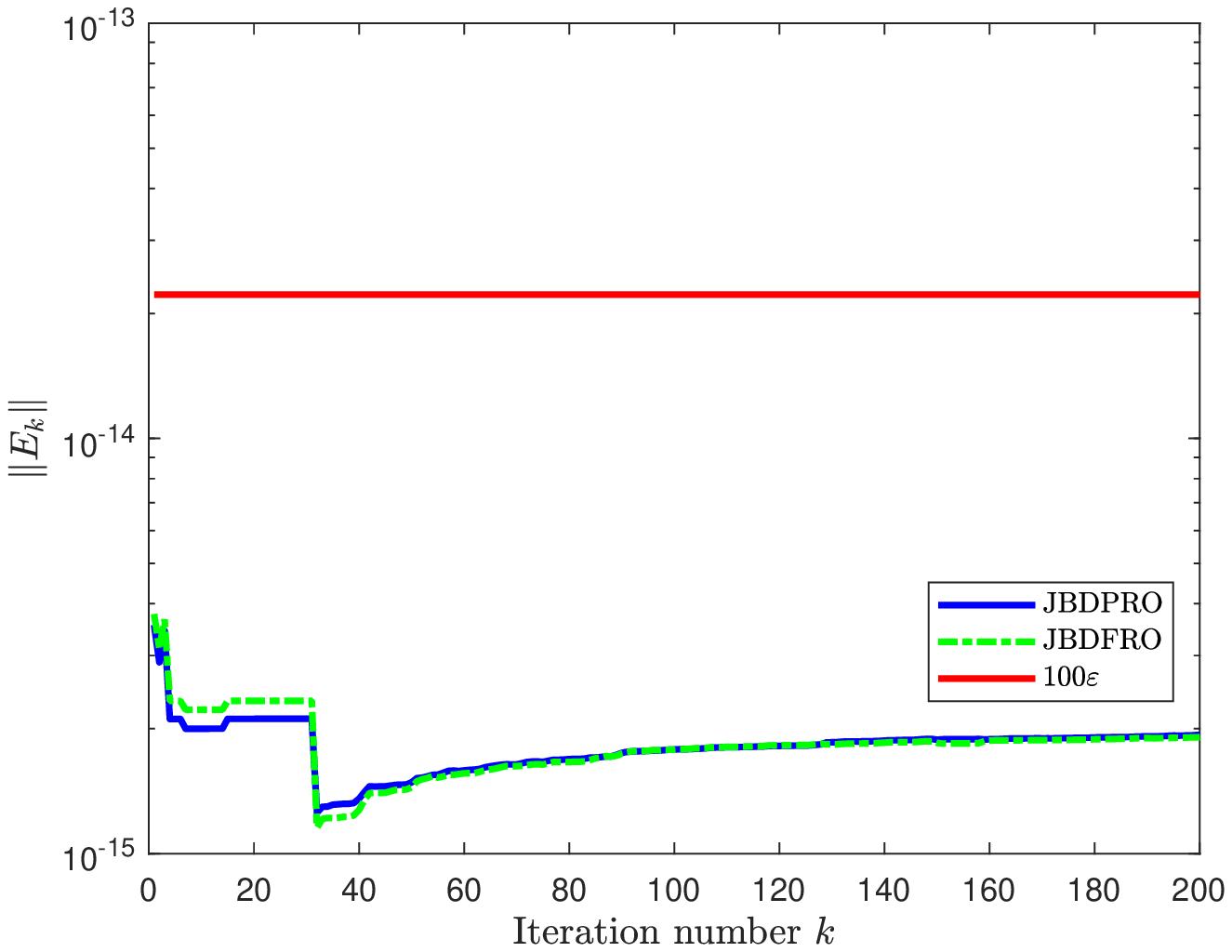}}
		\centerline{(c)}
	\end{minipage}
	\hfill
	\begin{minipage}{0.48\linewidth}
		\centerline{\includegraphics[width=5.5cm,height=3.5cm]{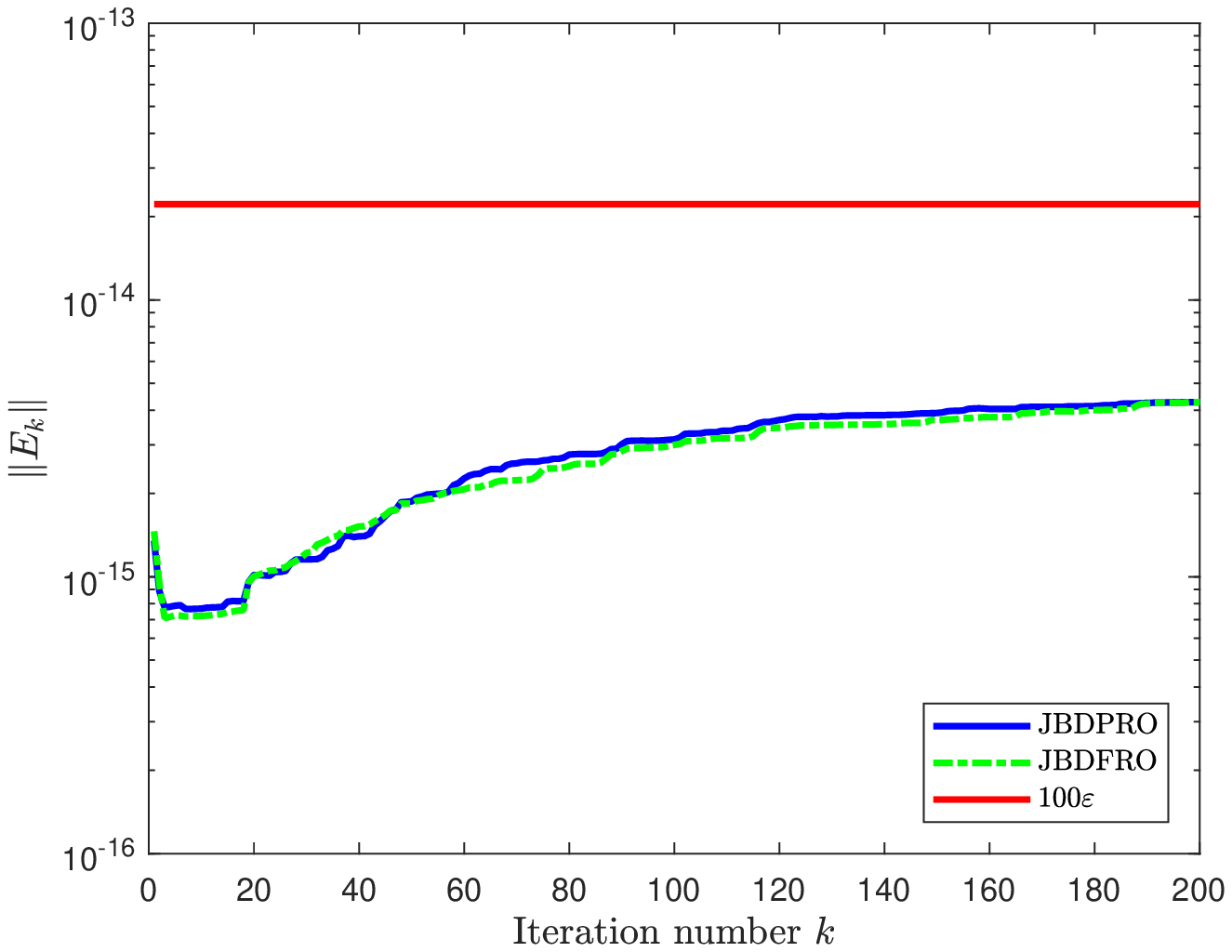}}
		\centerline{(d)}
	\end{minipage}
	\caption{ Comparison of $\|E_{k}\|$ computed by JBDPRO and JBDFRO:(a) {\sf \{$A_{c}$,$L_{s}$\}}; (b) {\sf \{rdb2048,dw2048\}};
		(c) {\sf \{$L_{m}$,ex31\}}; (d) {\sf \{rdb5000,$L_{1}$\}}.}
	\label{fig4}
\end{figure}

\begin{figure}[htp]
	\begin{minipage}{0.48\linewidth}
		\centerline{\includegraphics[width=5.5cm,height=3.5cm]{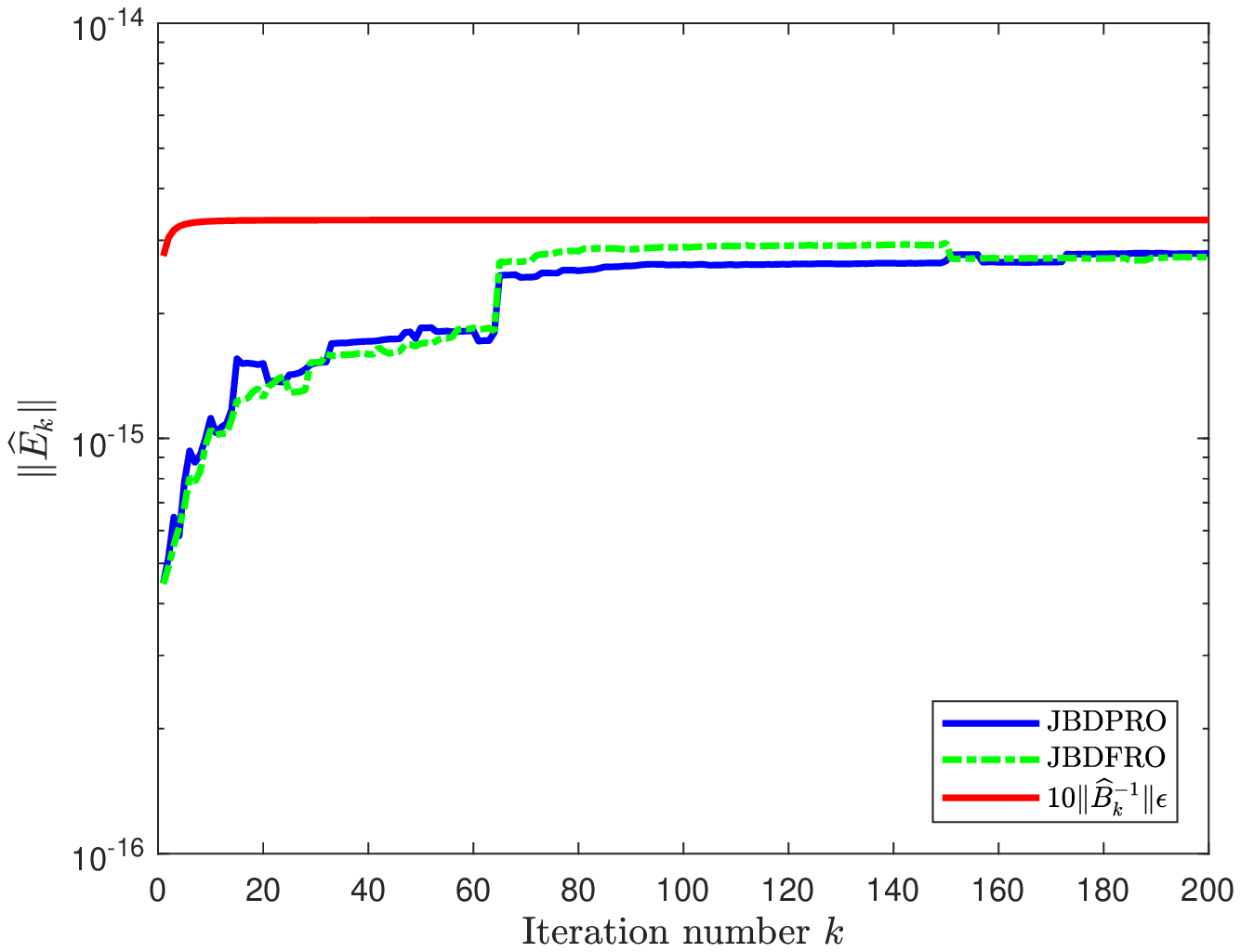}}
		\centerline{(a)}
	\end{minipage}
	\hfill
	\begin{minipage}{0.48\linewidth}
		\centerline{\includegraphics[width=5.5cm,height=3.5cm]{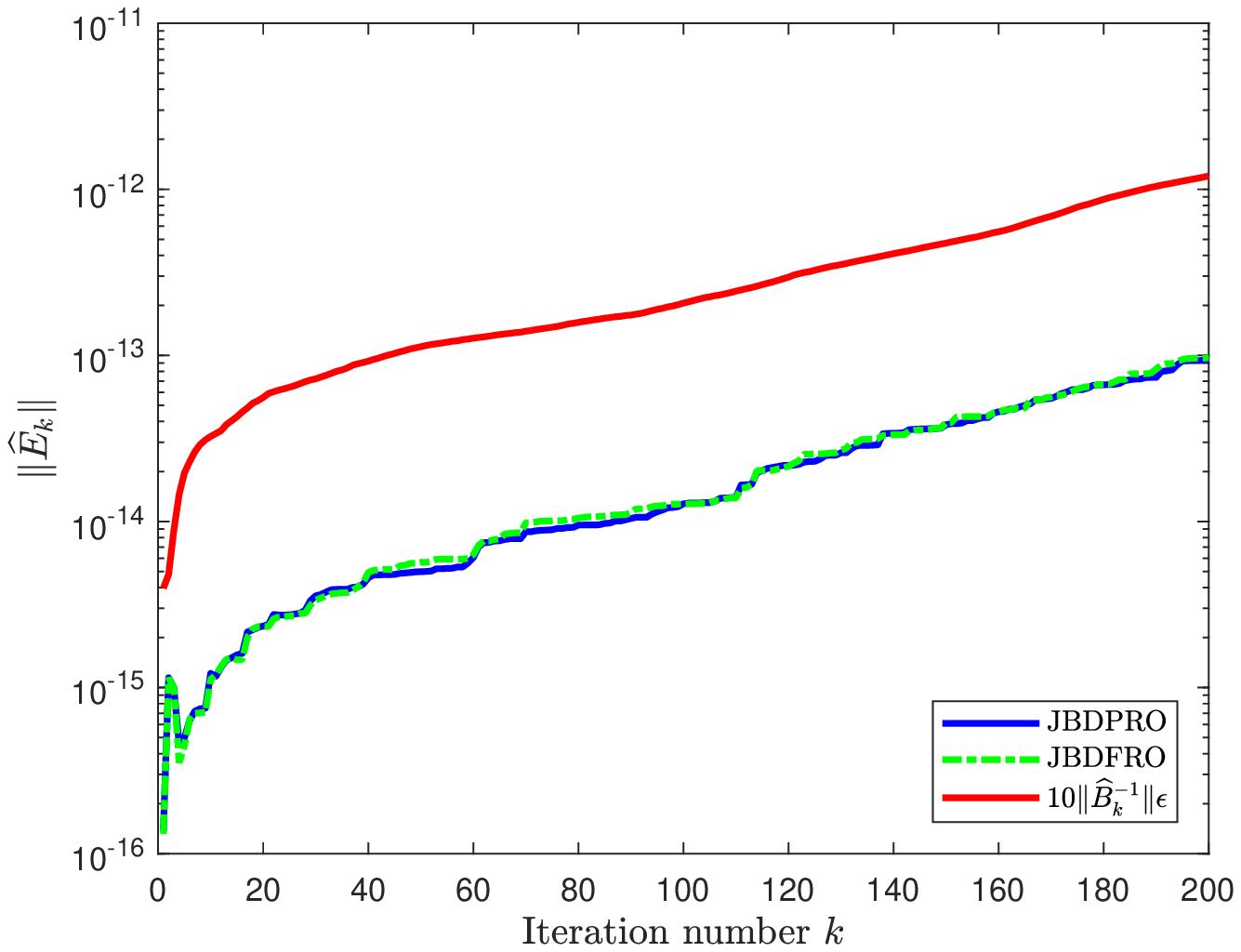}}
		\centerline{(b)}
	\end{minipage}
	
	\vfill
	\begin{minipage}{0.48\linewidth}
		\centerline{\includegraphics[width=5.5cm,height=3.5cm]{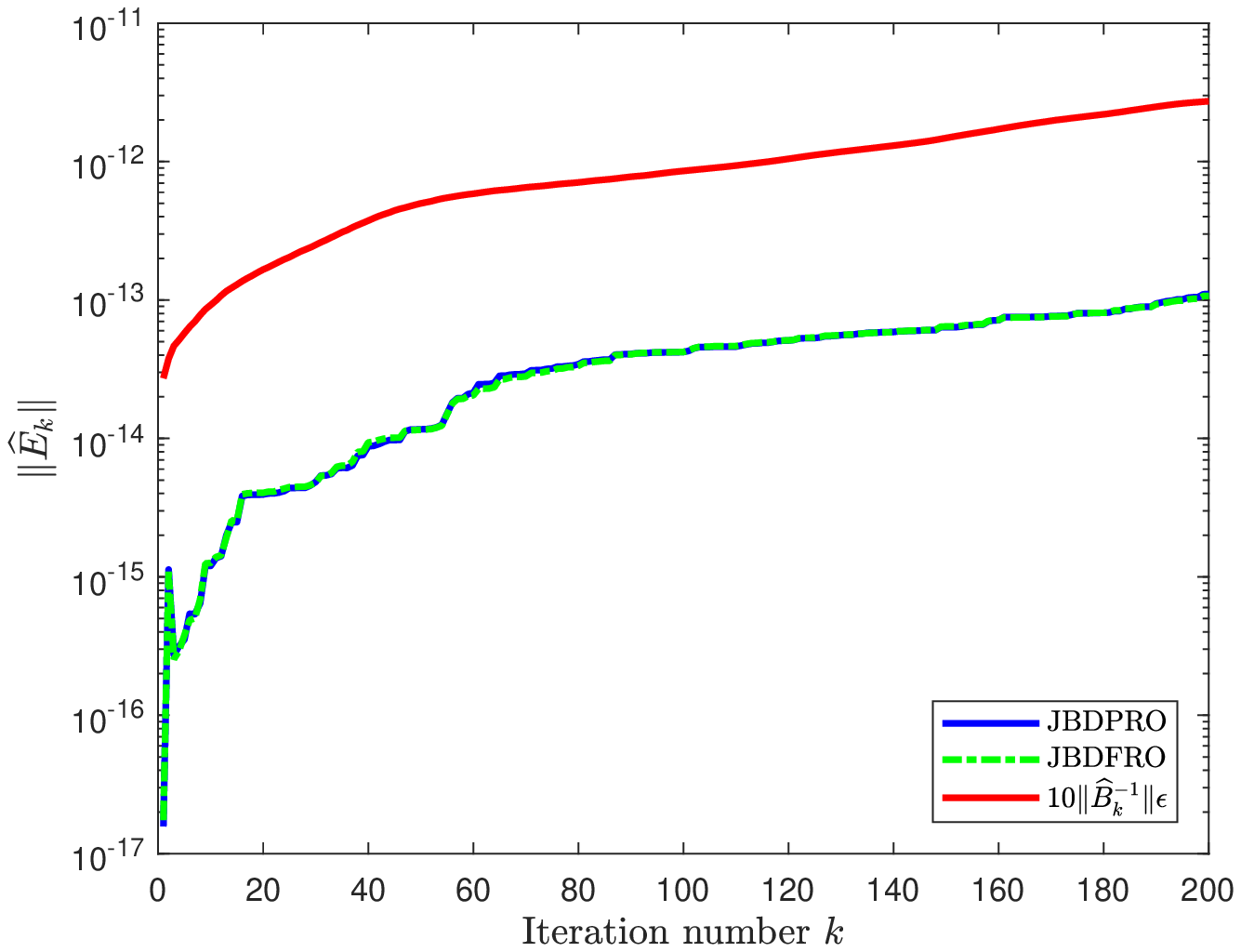}}
		\centerline{(c)}
	\end{minipage}
	\hfill
	\begin{minipage}{0.48\linewidth}
		\centerline{\includegraphics[width=5.5cm,height=3.5cm]{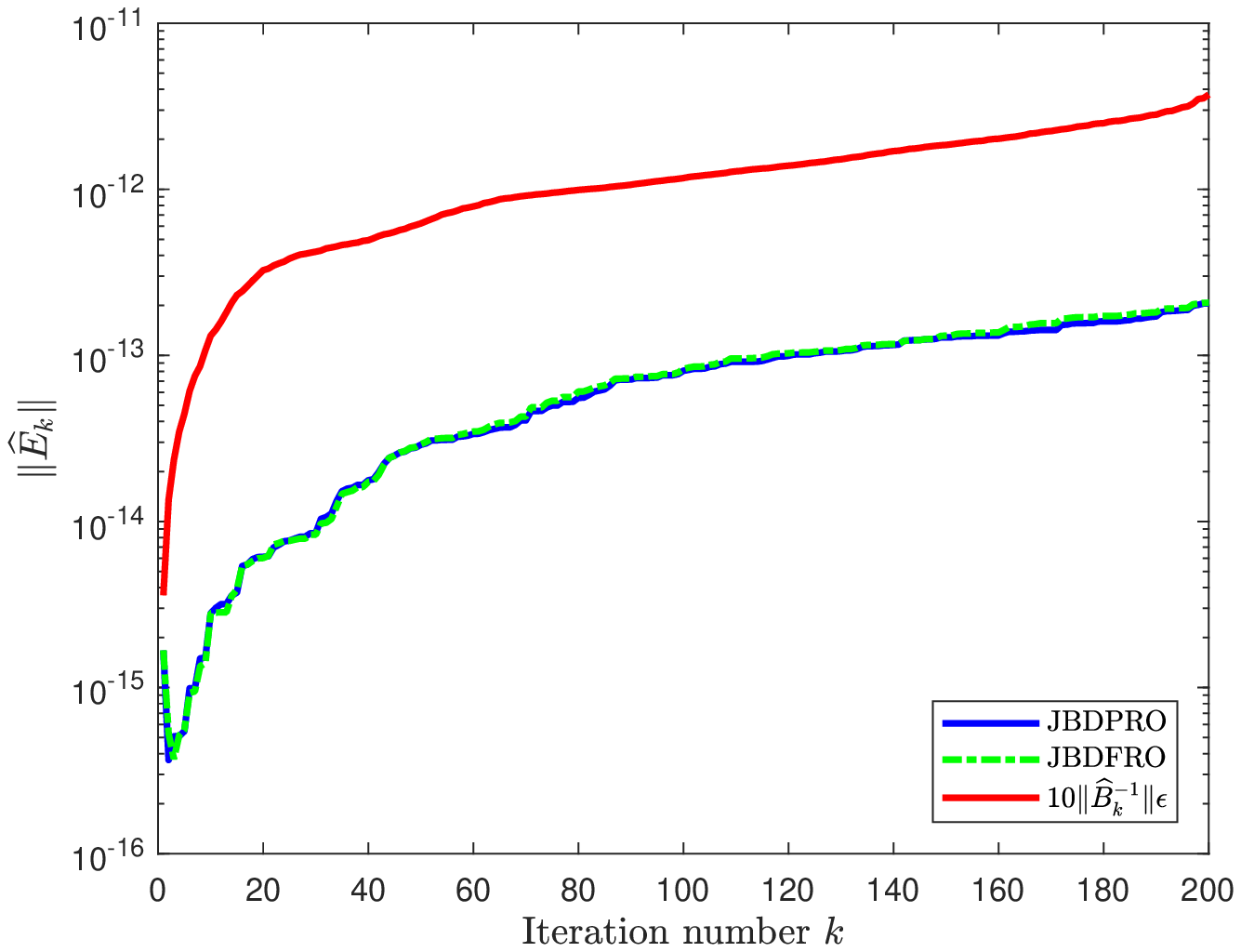}}
		\centerline{(d)}
	\end{minipage}
	\caption{ Comparison of $\|\widehat{E}_{k}\|$ computed by JBDPRO and JBDFRO:(a) {\sf \{$A_{c}$,$L_{s}$\}}; (b) {\sf \{rdb2048,dw2048\}};
		(c) {\sf \{$L_{m}$,ex31\}}; (d) {\sf \{rdb5000,$L_{1}$\}}.}
	\label{fig5}
\end{figure}

Now we compare the JBDPRO algorithm with the joint bidiagonalization process with full reorthognalization(JBDFRO). The JBDFRO algorithm uses the full reorthogonalization strategy for $u_{i}$, $\tilde{v}_{i}$ and $\hat{u}_{i}$ at each step, and the computed $U_{k+1}$, $\widetilde{V}_{k}$ and $\widehat{U}_{k}$ are orthogonalized to machine precision $\epsilon$. Figure \ref{fig5} and figure \ref{fig6} depict the variation of $\|E_{k}\|$ and $\|\widehat{E}_{k}\|$ computed by JBDPRO and JBDFRO, respectively. From these figures, we can find that both $\|E_{k}\|$ and $\|\widehat{E}_{k}\|$ computed by JBDPRO and JBDFRO are almost the same. For the four examples, the quantity $\|E_{k}\|$ does not deviate far from $\epsilon$ and $100\epsilon$ is an upper bound, while $\|\widehat{E}_{k}\|$ grows slightly and the growth speed is mainly affected by that of $\|\widehat{B}_{k}^{-1}\|$.

%%%%%%%%%%%%%%%%% GSVD by JBD and JBDPRO %%%%%%%%%%%%%%%%%%%
We show the convergence of Ritz values computed from the SVD of $B_{k}$ or $\widehat{B}_{k}$ computed by the JBD process, with and without the semiorthogonalization strategy, respectively. The matrix pair $\{A,L\}$ is constructed as follows. Let $m=n=p=800$. First, construct a vector $c$ such that $c(1)=0.90$,  $c(2)=c(3)=0.86$, $c(4)=0.82$,  $c(5)=0.78$, $c(796)=0.22$, $c(797)=0.20$, $c(798)=c(799)=0.15$, $c(800)=0.10$ and $\texttt{c(6:795)=linspace(0.80,0.30,790)}$ generated by the MATLAB built-in function \texttt{linspace()}. Then let $s = ((1-c_{1}^{2})^{1/2}, \dots, (1-c_{n}^{2})^{1/2})$. Let $C=diag(c)$, $S=diag(s)$ and $\texttt{D = gallery(`orthog',n,2)}$, which means that $D$ is a symmetric orthogonal matrix. Finally let $A=CD$ and $L=SD$. By the construction, we know that the $i$-th generalized singular value of $\{A,L\}$ is $\{c_{i},s_{i}\}$, and the multiplicities of the generated singular values $\{0.86, \sqrt{1-0.86^{2}}\}$ and $\{0.15, \sqrt{1-0.15^{2}}\}$ are 2.

\begin{figure}[htp]
	\begin{minipage}{0.48\linewidth}
		\centerline{\includegraphics[width=5.5cm,height=3.5cm]{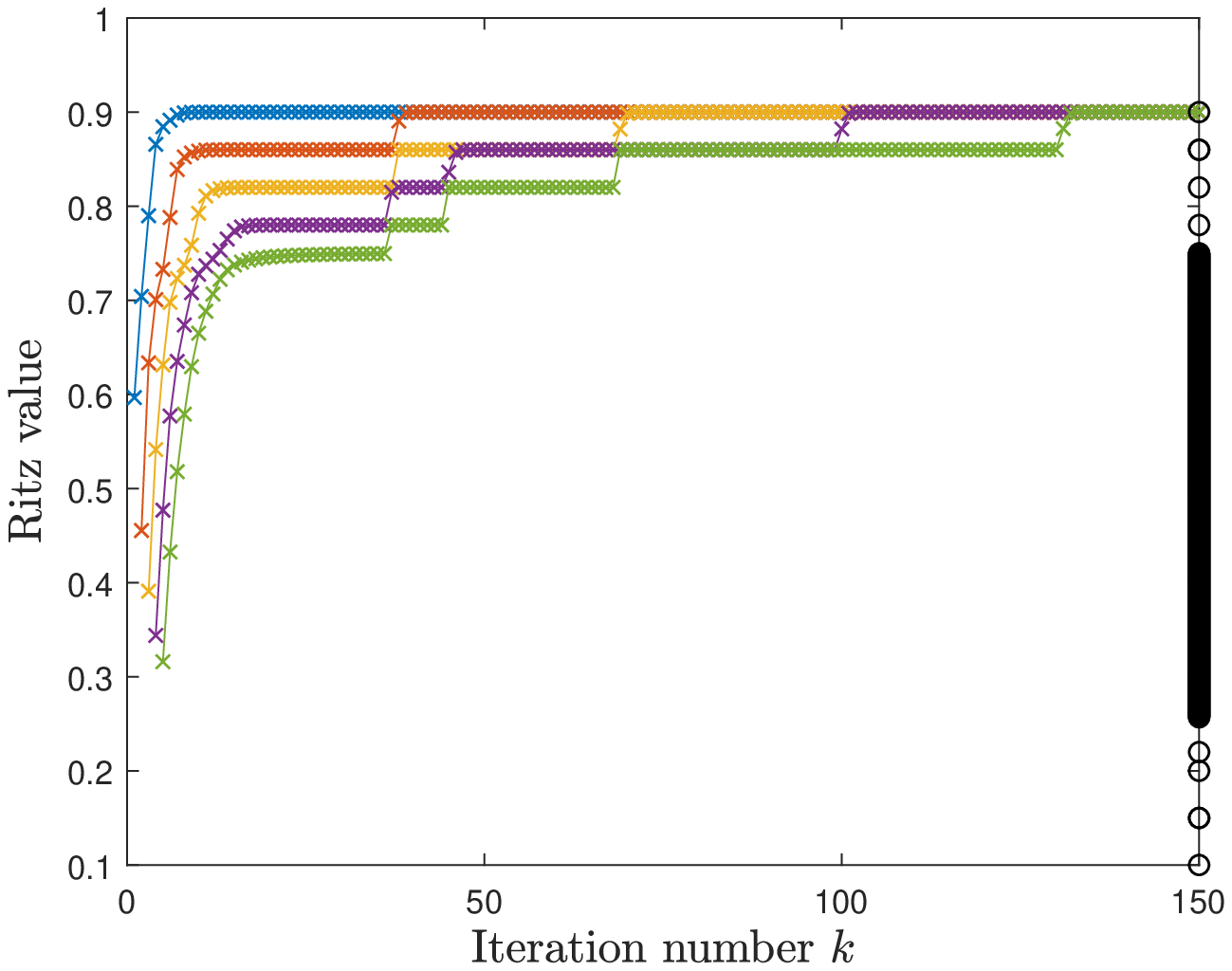}}
		\centerline{(a)}
	\end{minipage}
	\hfill
	\begin{minipage}{0.48\linewidth}
		\centerline{\includegraphics[width=5.5cm,height=3.5cm]{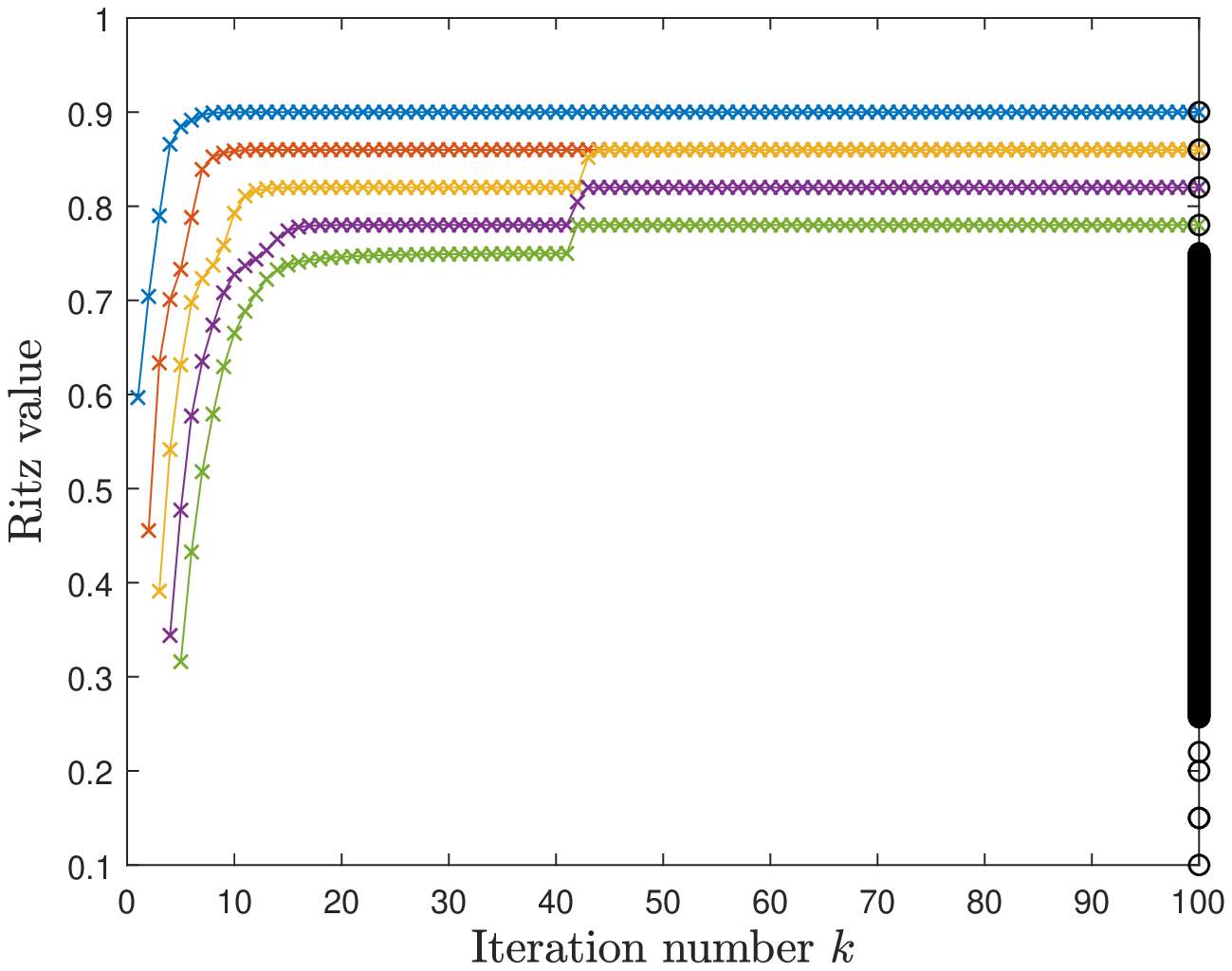}}
		\centerline{(b)}
	\end{minipage}
	
	\vfill
	\begin{minipage}{0.48\linewidth}
		\centerline{\includegraphics[width=5.5cm,height=3.5cm]{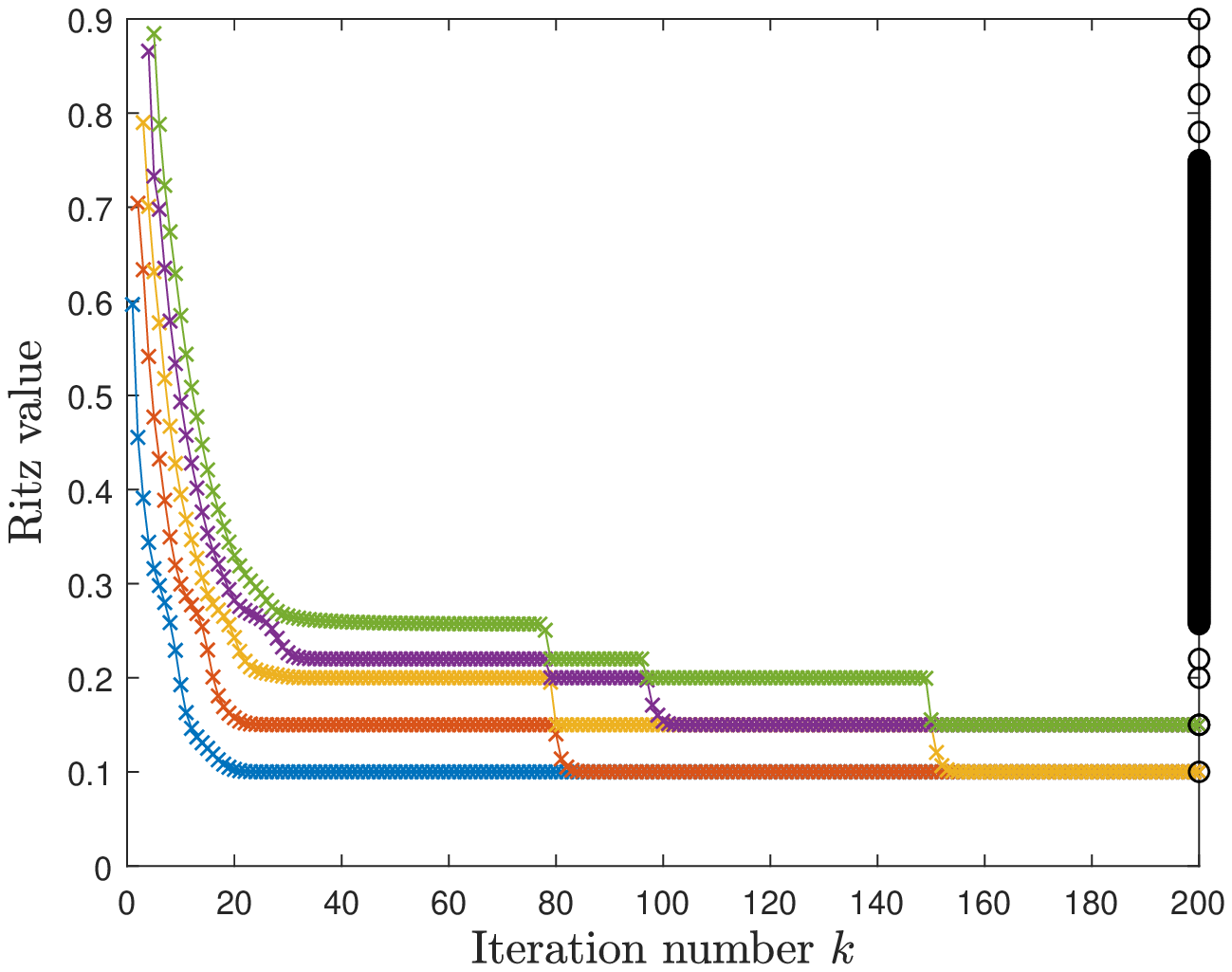}}
		\centerline{(c)}
	\end{minipage}
	\hfill
	\begin{minipage}{0.48\linewidth}
		\centerline{\includegraphics[width=5.5cm,height=3.5cm]{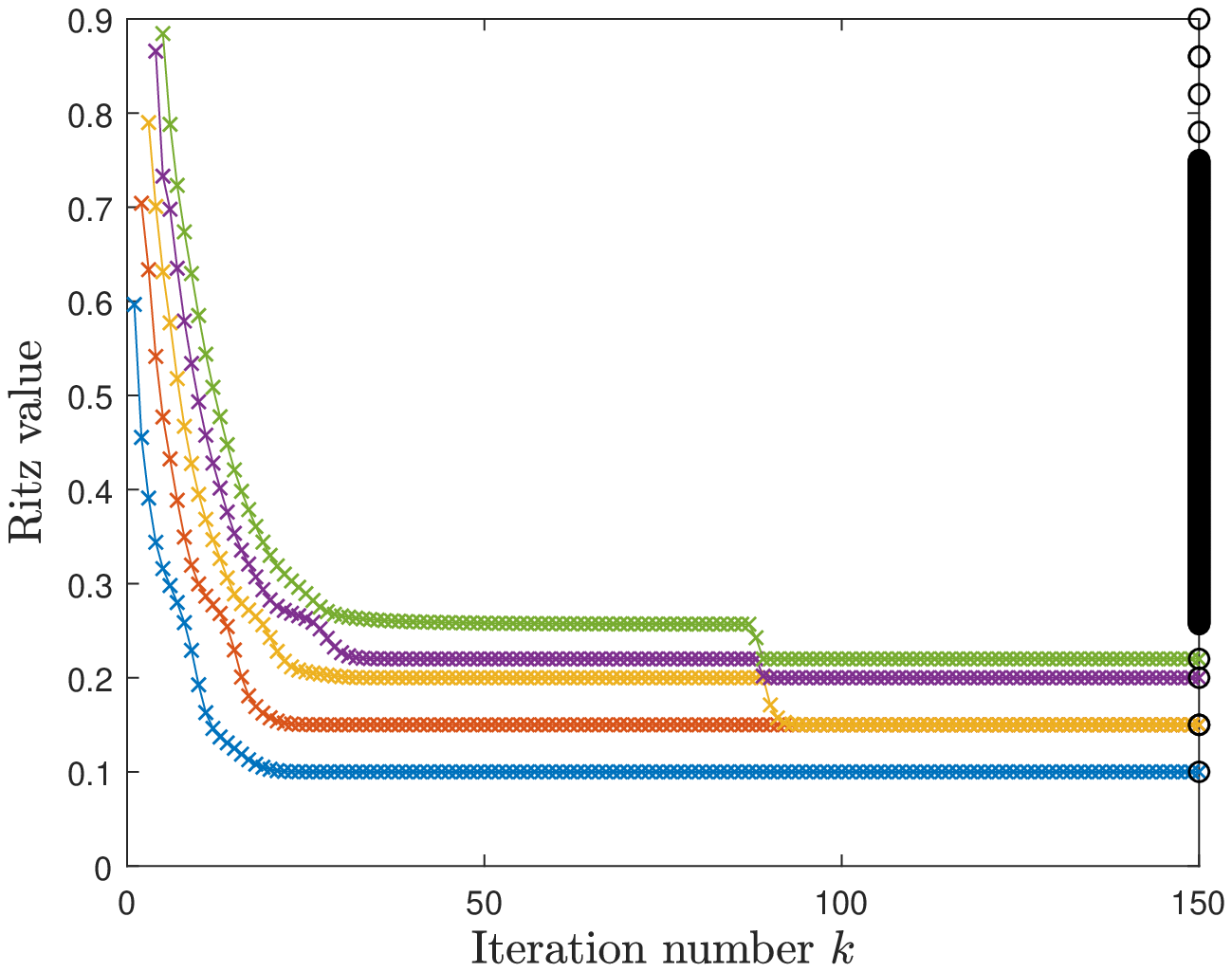}}
		\centerline{(d)}
	\end{minipage}
	\caption{Convergence of Ritz values from the SVD of $B_{k}$: (a) {the first five largest Ritz values, computed by JBD}; (b) {the first five largest Ritz values, computed by JBDPRO}; (c) {the first five smallest Ritz values, computed by JBD}; (d) {the first five smallest Ritz values, computed by JBDPRO}.}
	\label{fig6}
\end{figure}

\begin{figure}[htp]
	\begin{minipage}{0.48\linewidth}
		\centerline{\includegraphics[width=5.5cm,height=3.5cm]{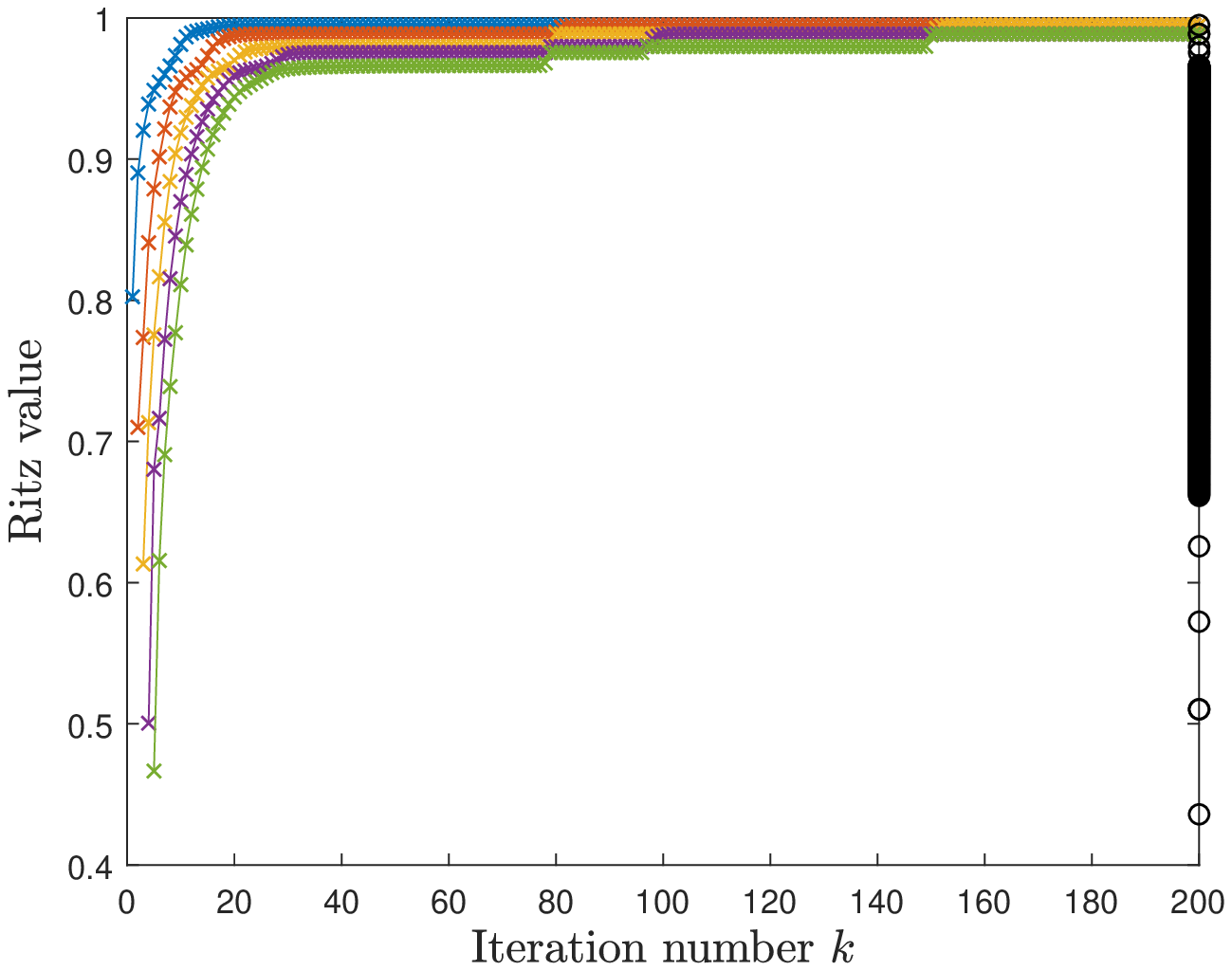}}
		\centerline{(a)}
	\end{minipage}
	\hfill
	\begin{minipage}{0.48\linewidth}
		\centerline{\includegraphics[width=5.5cm,height=3.5cm]{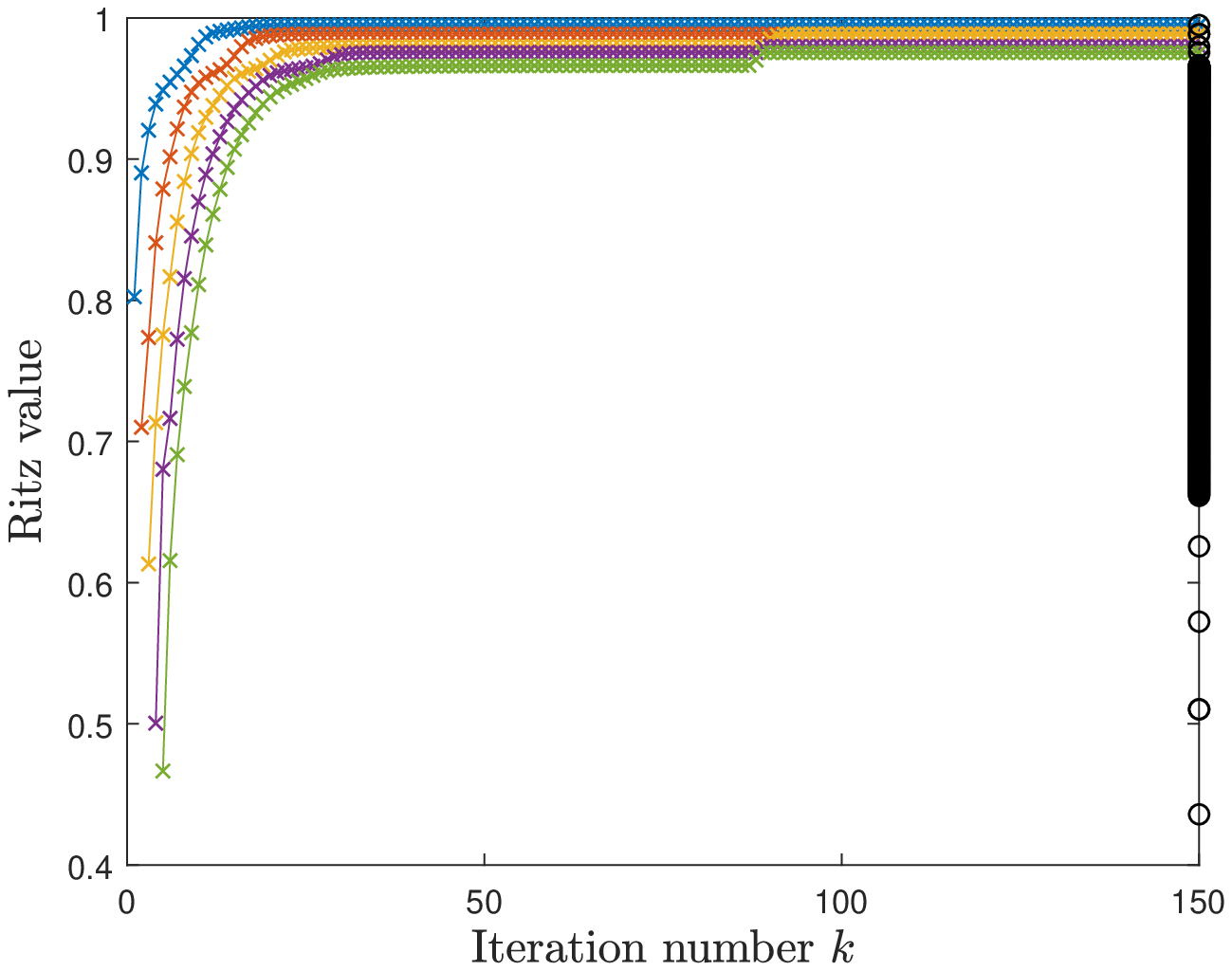}}
		\centerline{(b)}
	\end{minipage}
	
	\vfill
	\begin{minipage}{0.48\linewidth}
		\centerline{\includegraphics[width=5.5cm,height=3.5cm]{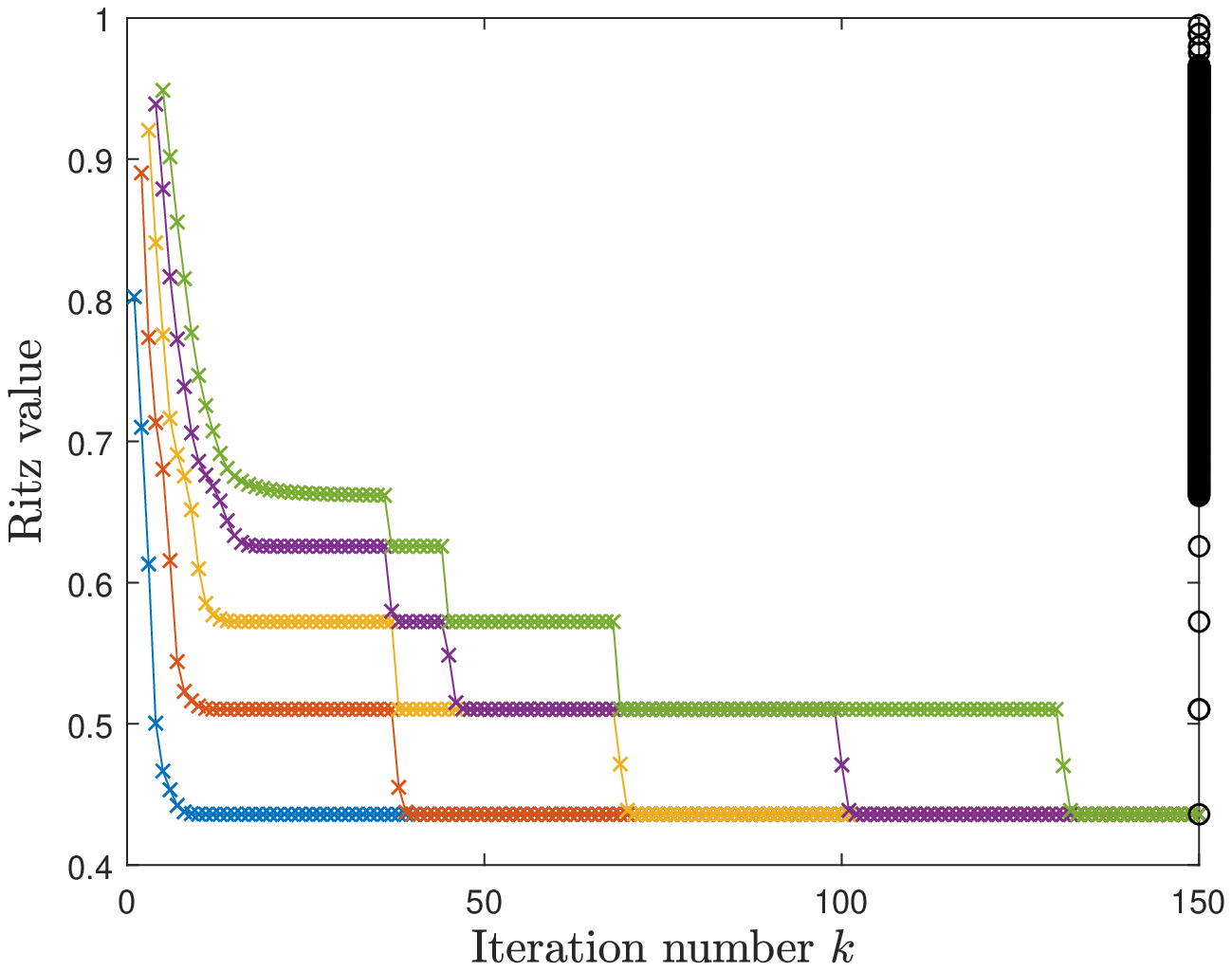}}
		\centerline{(c)}
	\end{minipage}
	\hfill
	\begin{minipage}{0.48\linewidth}
		\centerline{\includegraphics[width=5.5cm,height=3.5cm]{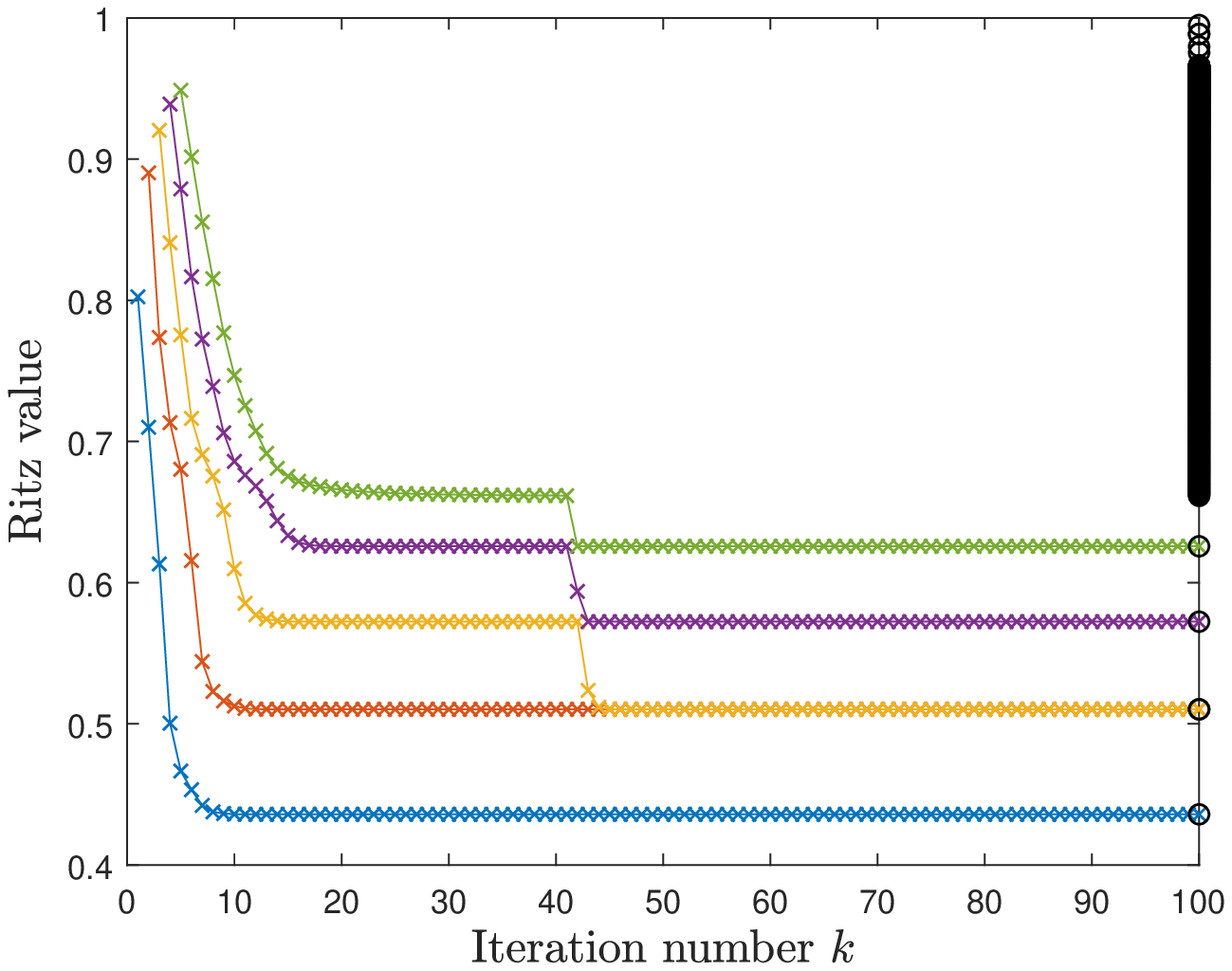}}
		\centerline{(d)}
	\end{minipage}
	\caption{ Convergence of Ritz values from the SVD of $\widehat{B}_{k}$: (a) {the first five largest Ritz values, computed by JBD}; (b) {the first five largest Ritz values, computed by JBDPRO}; (c) {the first five smallest Ritz values,computed by JBD}; (d) {the first five smallest Ritz values, computed by JBDPRO}.}
	\label{fig7}
\end{figure}

Figure \ref{fig6} depicts the convergence of the first five largest and smallest Ritz values from the SVD of $B_{k}$ computed by the JBD and JBDPRO algorithms, respectively. The right horizontal line indicates the values of $c_{i}$ for $i=1,\dots, 800$. In the left panel, which shows the convergence behavior without reorthogonalization, we see the phenomenon that some of the converged Ritz values suddenly ``jump" to become ``ghosts" and then converge to the next larger or smaller singular values after a few iterations, which results to many unwanted spurious copies of generalized singular values and makes it difficult to determine whether these spurious copies are real multiple generalized singular values. In the right panel, where $B_{k}$ is computed by the JBDPRO algorithm, the convergence behavior is much simpler and it is similar to the ideal case in exact arithmetic. It can be found from subfigures (b) and (d), that a simple generalized singular value can be approximated by Ritz values with no ghosts appearing, while a multiple generalized singular value can be approximated one by one by the Ritz values. 

Figure \ref{fig7} depicts the convergence of the first five largest and smallest Ritz values from the SVD of $\widehat{B}_{k}$, where the right horizontal line indicates the value of $s_{i}$ for $i=1,\dots, 800$. The convergence behavior of the Ritz values from the SVD of $\widehat{B}_{k}$ is very similar to that from the SVD of $B_{k}$. From subfigures (a) and (c), which show the convergence of Ritz values without reorthogonalization, we find the ``ghosts" phenomenon that some converged Ritz values suddenly ``jump" and then converge to the next larger or smaller singular values after a few iterations. In subfigures (b) and (d), where $\widehat{B}_{k}$ is computed by the JBDPRO algorithm, the spurious copies are prohibited from appearing, and the multiplicities of the generalized singular values can be determined correctly from the convergence of Ritz values. 

\begin{table}[]
	\centering
	\caption{Running time comparison(measured in seconds)}
	\begin{tabular}{|l|l|l|l|l|l|}
		\hline 
		$A$   		    & $L$  	 	    & JBD      & JBDPRO   & JBDFRO  & ratio	\\  \hline  
		$A_{c}$		    & $L_{s}$       & 0.2528   & 0.2639   & 0.4801  & 54.98\% 	\\  
		{\sf rdb2048}   & {\sf dw2048}  & 2.0048   & 2.2476   & 2.7790  & 80.88\%	\\  
		$L_{m}$         & {\sf ex31}    & 6.8089   & 7.0218   & 9.4157  & 74.58\%	\\
		{\sf rdb5000}	& $L_{1}$       & 10.4968  & 10.7883  & 14.2574 & 75.67\%	\\ \hline 
	\end{tabular}
	\label{tab2}
\end{table}

Finally, we compare the efficiency of the JBDPRO and JBDFRO. Table \ref{tab2} shows the running time of 200-step JBD, JBDPRO and JBDFRO for the four test examples. We also compute the ratio of the running times of JBDPRO and JBDFRO. For each case, we run the algorithms 10 times and take the average over all 10 running times. From the table, we find that the running time of JBDPRO is only about 70\%--80\% of that of JBDFRO. Therefore, the JBDPRO is more efficient than JBDFRO while can avoid ``ghosts" from appearing.

%%%%%%%%%%%%%%%%%%%%%%%%%%%%%%%%%%%%%%%%%%%%%%%%%%%%%%%%%%%%%%%%%%%%%%%%
\section{Conclusion}\label{sec6}
We have proposed a semiorthogonalization strategy for the JBD process to maintain some level of orthongonality of the Lanczos vectors. Our rounding error analysis establishes connections between the JBD process with the semiorthonalization strategy and the Lanczos bidiagonalization process. We have proved that if the Lanczos vectors are kept semiorthogonal, the computed $\widehat{B}_{k}$ is the Ritz-Galerkin projection of $Q_{L}$ on the subspaces $span(\widehat{U}_{k})$ and $span(\widehat{V}_{k})$ within error $\delta=O(c_4(m,n,p)\|\widehat{B}_{k}^{-1}\|\epsilon)$. Therefore, the convergence of Ritz values computed from $\widehat{B}_{k}$ will not be affected by rounding errors and the final accuracy of computed quantities is high enough as long as $\|\widehat{B}_{k}^{-1}\|$ does not become too large.

Based on the semiorthogonalization strategy, we have developed the JBDPRO algorithm. The JBDPRO algorithm can keep the Lanczos vectors at a desired level and saves much unnecessary reorthogonalization work compared with the JBDFRO algorithm. Several numerical examples have been used to confirm our theory and algorithm.

%%%%%%%%%%%%%%%%%%%%%%%%%%%%%%%%%%%%%%%%%%%%%%%%%%%%%%%
\appendix
\section{Appendix: Proofs of Lemma \ref{Lem3.1} and Lemma \ref{Lem3.2}}\label{Apd}

\begin{proof}[Proof of Lemma \ref{Lem3.1}]
	We prove \eqref{3.11} by mathematical induction. For the base case $i=1$,  from \eqref{3.9} and \eqref{3.10} we have
	\begin{align*}
	\hat{\alpha}_{1}Q_{L}^{T}\hat{u}_{1} 
	& = Q_{L}^{T}Q_{L}\hat{v}_{1} -Q_{L}^{T}\hat{f}_{1} \\
	& = (I_{n}-Q_{A}^{T}Q_{A})\hat{v}_{1} -Q_{L}^{T}\hat{f}_{1} \\
	& = \hat{v}_{1}-Q_{A}^{T}(\alpha_{1}u_{1}+\beta_{2}u_{2}+f_{1})-Q_{L}^{T}\hat{f}_{1} \\
	& = \hat{v}_{1}-\alpha_{1}(\alpha_{1} v_{1}+g_{1})-
	\beta_{2}(\alpha_{2}v_{2}+\beta_{2}v_{1}+g_{2}) -Q_{A}^{T}f_{1}-Q_{L}^{T}\hat{f}_{1} \\
	& = (1-\alpha_{1}^{2}-\beta_{2}^{2})\hat{v}_{1} 
	+ \alpha_{2}\beta_{2}\hat{v}_{2} + O(\bar{q}(m,n,p)\epsilon)  .
	\end{align*}
	
	Next, suppose \eqref{3.11} is true for indices up to $i$. For $i+1$, we have
	$$\hat{\alpha}_{i+1}Q_{L}^{T}\hat{u}_{i+1} 
	= Q_{L}^{T}Q_{L}\hat{v}_{i+1}-\hat{\beta}_{i}Q_{L}^{T}\hat{u}_{i}-
	\sum_{j=1}^{i-1}\hat{\xi}_{ji+1}Q_{L}^{T}\hat{u}_{j}-Q_{L}^{T}\hat{f}_{i+1} .$$
	Since $(\hat{\beta}_{i}Q_{L}^{T}\hat{u}_{i}-
	\sum_{j=1}^{i-1}\hat{\xi}_{ji+1}Q_{L}^{T}\hat{u}_{j}) \in span\{\hat{v}_{1}, \dots, \hat{v}_{i+1}\}+ O(\bar{q}(m,n,p)\epsilon)$, we only need to prove $Q_{L}^{T}Q_{L}\hat{v}_{i+1}\in span\{\hat{v}_{1}, \dots, \hat{v}_{i+2}\}+ O(\bar{q}(m,n,p)\epsilon)$. Notice that
	\begin{align*}
	Q_{L}^{T}Q_{L}\hat{v}_{i+1} 
	& = (I_{n}-Q_{A}^{T}Q_{A})\hat{v}_{i+1} \\
	& = \hat{v}_{i+1}+(-1)^{i+1}Q_{A}^{T}(\alpha_{i+1}u_{i+1}+\beta_{i+1}u_{i+2}+\sum_{j=1}^{i}\xi_{ji+1}u_{j}+f_{i+1}) \\
	& = \hat{v}_{i+1}+(-1)^{i+1}(\alpha_{i+1}Q_{A}^{T}u_{i+1}+\beta_{i+1}Q_{A}^{T}u_{i+2}+
	\sum_{j=1}^{i}\xi_{ji+1}Q_{A}^{T}u_{j}) + (-1)^{i+1}Q_{A}^{T}f_{i+1} .
	\end{align*}
	From \eqref{3.9}, we have
	$$(\alpha_{i+1}Q_{A}^{T}u_{i+1}+\beta_{i+1}Q_{A}^{T}u_{i+2}+
	\sum_{j=1}^{i}\xi_{ji+1}Q_{A}^{T}u_{j}) \in span\{\hat{v}_{1}, \dots, \hat{v}_{i+2}\}+ O(\bar{q}(m,n,p)\epsilon) ,$$
	which completes the proof of the induction step.
	
	By mathematical induction principle, \eqref{3.11} holds for all $i = 1, 2, \dots$.
\end{proof}

%%%%%%%%%%%%%%%%%%%%%%%%%%%%%%%%%%%%%%%%%%%%%%%%
\begin{proof}[Proof of Lemma \ref{Lem3.2}]
	By \eqref{3.8} and \eqref{3.9}, the process of computing $U_{k+1}$ and $V_{k}$ can be treated as the Lanczos bidiagonalization of $Q_{A}$ with the semiorthogonalization strategy. Since the $k$-step Lanczos bidiagonalization process is equivalent to the $(2k+1)$-step symmetric Lanczos process \cite[\S 7.6.1]{Bjorck1996}, the bounds of $C_{k}$ and $D_{k}$ can be concluded from the property of the symmetric Lanczos process with the semiorthogonalization strategy; see \cite[Lemma 4]{Simon1984b} and its proof.
	
	Now we give the bound of $\widehat{C}_{k}$. At the $(i-1)$-th step, from \eqref{3.10}, we can write the reorthogonalization step of $\hat{u}_{i}$ as 
	\begin{align}
	& \hat{\alpha}_{i}^{'}\hat{u}_{i}^{'} = Q_{L}\hat{v}_{i}-\hat{\beta}_{i-1}\hat{u}_{i-1}-\hat{f}_{i}^{'},\label{A1} \\
	& \hat{\alpha}_{i}\hat{u}_{i} = \hat{\alpha}_{i}^{'}\hat{u}_{i}^{'}-
	\sum_{j=1}^{i-2}\hat{\xi}_{ji}\hat{u}_{j}-\hat{f}_{i}^{''} , \label{A2}
 	\end{align}
	where $\|\hat{f}_{i}^{'}\|, \|\hat{f}_{i}^{''}\| = O(q_{3}(p,n)\epsilon)$. Thus, for $l=1,\dots, i-2$, we have
	$$\hat{\alpha}_{i}^{'}\hat{u}_{l}^{T}\hat{u}_{i}^{'}	
	= \hat{u}_{l}^{T}Q_{L}\hat{v}_{i}-\hat{\beta}_{i-1}\hat{u}_{l}^{T}\hat{u}_{i-1}-
	\hat{u}_{l}^{T}\hat{f}_{i}^{'} .$$
	From \eqref{3.11} and its proof, we know that
	$$ Q_{L}^{T}\hat{u}_{l}=\sum_{j=1}^{l+1}\lambda_{j}\hat{v}_{j} + O(\bar{q}(m,n,p)\epsilon) $$
	with modest constants $\lambda_{j}$ for $j=1,\dots, l+1$. Notice that $\hat{u}_{l}^{T}\hat{u}_{i-1}, \hat{v}_{j}^{T}\hat{v}_{i}\leq\sqrt{\epsilon/(2k+1)}$ for $l=1,\dots, i-2$ and $j=1,\dots, l+1$. We can get
	$$ \hat{\alpha}_{i}^{'}\hat{u}_{l}^{T}\hat{u}_{i}^{'}= \sum_{j=1}^{l+1}\lambda_{j}\hat{v}_{j}^{T}\hat{v}_{i}-
	\hat{\beta}_{i-1}\hat{u}_{l}^{T}\hat{u}_{i-1} + O(\bar{q}(m,n,p)\epsilon) =O(\sqrt{\epsilon}) .$$
	
	Then we prove $M = \max_{1\leq j \leq i-1}|\hat{\xi}_{ji}|=O(\sqrt{\epsilon})$.
	From \eqref{A1}, after being premultiplied by $\hat{u}_{l}^{T}$ and some rearrangement, we obtain
	$$\hat{\xi}_{li} = \hat{\alpha}_{i}^{'}\hat{u}_{l}^{T}\hat{u}_{i}^{'}-\hat{\alpha}_{i}\hat{u}_{l}^{T}\hat{u}_{i}-\sum_{j=1,j\neq l}^{i-2}\hat{\xi}_{ji}\hat{u}_{l}^{T}\hat{u}_{j}-\hat{u}_{l}^{T}\hat{f}_{i}^{''} .$$
	Notice that $\hat{u}_{l}^{T}\hat{u}_{i}=O(\sqrt{\epsilon})$ and we have proved
	$\hat{\alpha}_{i}^{'}\hat{u}_{l}^{T}\hat{u}_{i}^{'}=O(\sqrt{\epsilon})$ for $l=1,\dots, i-2$. We can get
	\begin{equation*}
	|\hat{\xi}_{li}| \leq O(\sqrt{\epsilon})+O(\sqrt{\epsilon})+iM\sqrt{\epsilon}+O(\bar{q}(m,n,p)\epsilon) .
	\end{equation*}
	Now the right-hand side does not depend on $l$ anymore, and we finally obtain by taking the maximum on the left side
	\begin{equation*}
	(1-i\sqrt{\epsilon})M \leq O(\sqrt{\epsilon}) + O(\bar{q}(m,n,p)\epsilon) .
	\end{equation*}
	Therefore, we have $M = O(\sqrt{\epsilon})$.
\end{proof}

%%%%%%%%%%%%%%%%%%%%%%%%%%%%%%%%%%%%%%%%%%%%%%%%%%%%%%%%%%%%%%%%%%%%%%%%%%%%%%
%\begin{acknowledgements}
%If you'd like to thank anyone, place your comments here
%and remove the percent signs.
%\end{acknowledgements}

% Authors must disclose all relationships or interests that 
% could have direct or potential influence or impart bias on 
% the work: 
%
% \section*{Conflict of interest}
%
% The authors declare that they have no conflict of interest.

%%%%%%%%%%%%%%%%%%%%%%%%%%%%%%%%%%%%%%%%%%%%%%%%%%%%%%%%%%%%%%%%%%%%%%%%%%%%%
% BibTeX users please use one of
%\bibliographystyle{spbasic}      % basic style, author-year citations
%\bibliographystyle{spmpsci}      % mathematics and physical sciences
%\bibliographystyle{spphys}       % APS-like style for physics
%\bibliography{}   % name your BibTeX data base

% Non-BibTeX users please use

%
% and use \bibitem to create references. Consult the Instructions
% for authors for reference list style.
%
%\bibitem{RefJ}
%% Format for Journal Reference
%Author, Article title, Journal, Volume, page numbers (year)
%% Format for books
%\bibitem{RefB}
%Author, Book title, page numbers. Publisher, place (year)
%% etc

\end{document}